\author{Benjamin Linowitz}
\address{Department of Mathematics\\Oberlin College\\Oberlin, OH 44074}
\email{benjamin.linowitz@oberlin.edu}
\author{D. B. McReynolds}
\address{Department of Mathematics\\Purdue University\\West Lafayette, IN 47907}
\email{dmcreyno@purdue.edu}
\author{Paul Pollack}
\address{Department of Mathematics\\University of Georgia\\Athens, GA 30602}
\email{pollack@uga.edu}
\author{Lola Thompson}
\address{Department of Mathematics\\Oberlin College\\Oberlin, OH 44074}
\email{lola.thompson@oberlin.edu}
\title{Systoles of Arithmetic Hyperbolic \\ Surfaces and 3--manifolds}
\DeclareMathAlphabet{\curly}{U}{rsfs}{m}{n}
\DeclareMathOperator{\Comm}{Comm}
\DeclareMathOperator{\covol}{covol}
\DeclareMathOperator{\disc}{disc}
\DeclareMathOperator{\Ram}{Ram}
\DeclareMathOperator{\Gal}{Gal}
\DeclareMathOperator{\Mat}{M}
\DeclareMathOperator{\N}{N}
\DeclareMathOperator{\Norm}{Norm}
\DeclareMathOperator{\PSL}{PSL}
\DeclareMathOperator{\Reg}{Reg}
\DeclareMathOperator{\Sys}{Sys}
\DeclareMathOperator{\Vol}{Vol}
\newtheorem{thm}{Theorem}[section]
\newtheorem{cor}[thm]{Corollary}
\newtheorem{prop}[thm]{Proposition}
\newtheorem{lem}[thm]{Lemma}
\theoremstyle{definition}
\newtheorem*{rmk}{Remark}
\theoremstyle{remark}
\newtheorem{proposition}{Proposition}[section]
\def\1{\mathbf{1}}
\def\disc{\mathrm{disc}}
\theoremstyle{remark}
\theoremstyle{plain}
\newtheorem{theorem}[proposition]{Theorem}
\newtheorem{lemma}[proposition]{Lemma}
\def\C{\mathbf{C}}
\def\Q{\mathbf{Q}}
\def\pp{\mathfrak{p}}
\def\Pp{\curly{P}}
\def\R{\mathbf{R}}
\def\N{\mathbf{N}}
\def\Q{\mathbf{Q}}
\def\1{\mathbf{1}}
\def\disc{\mathrm{disc}}
\def\Gal{\mathrm{Gal}}
\newcommand{\frakp}{\mathfrak{p}}
\newcommand{\calO}{\mathcal{O}}
\newcommand{\innp}[1]{\left< #1 \right>}
\newcommand{\abs}[1]{\left\vert#1\right\vert}
\newcommand{\set}[1]{\left\{#1\right\}}
\newcommand{\iny}{\infty}
\newcommand{\leg}[2]{\genfrac{(}{)}{}{}{#1}{#2}}
\def\moverlay{\mathpalette\mov@rlay}
\def\mov@rlay#1#2{\leavevmode\vtop{%
   \baselineskip\z@skip \lineskiplimit-\maxdimen
   \ialign{\hfil$\m@th#1##$\hfil\cr#2\crcr}}}
\newcommand{\charfusion}[3][\mathord]{
    #1{\ifx#1\mathop\vphantom{#2}\fi
        \mathpalette\mov@rlay{#2\cr#3}
      }
    \ifx#1\mathop\expandafter\displaylimits\fi}
\let\@@pmod\pmod
\DeclareRobustCommand{\pmod}{\@ifstar\@pmods\@@pmod}
\def\@pmods#1{\mkern4mu({\operator@font mod}\mkern 6mu#1)}
\begin{document}

\begin{abstract}
Our main result is that for all sufficiently large $x_0>0$, the set of commensurability classes of arithmetic hyperbolic 2-- or 3--orbifolds with fixed invariant trace field $k$ and systole bounded below by $x_0$ has density one within the set of all commensurability classes of arithmetic hyperbolic 2-- or 3--orbifolds with invariant trace field $k$. The proof relies upon bounds for the absolute logarithmic Weil height of algebraic integers due to Silverman, Brindza and Hajdu, as well as precise estimates for the number of rational quaternion algebras not admitting embeddings of any quadratic field having small discriminant. When the trace field is $\Q$, using work of Granville and Soundararajan, we establish a stronger result that allows our constant lower bound $x_0$ to grow with the area. As an application, we establish a systolic bound for arithmetic hyperbolic surfaces that is related to prior work of Buser--Sarnak and Katz--Schaps--Vishne. Finally, we establish an analogous density result for commensurability classes of arithmetic hyperbolic 3--orbifolds with small area totally geodesic $2$--orbifolds.
\end{abstract}

\maketitle

\section{Introduction}

Given a closed, orientable surface $\Sigma_g$ of genus $g \geq 2$, the moduli space of hyperbolic metrics on $\Sigma_g$ is denoted by $\mathcal{M}_g$. This $(3g-3)$ complex dimensional moduli space is a central object of interest in several fields. The present article is concerned with the subset of arithmetic hyperbolic points. It follows from work of Borel \cite{borel-commensurability} that the set of arithmetic hyperbolic structures comprise a finite set in $\mathcal{M}_g$. These very special hyperbolic metrics naturally arise in connection to algebraic and geometric extremal problems; for instance, Hurwitz surfaces that achieve the maximal possible order isometry group are always arithmetic.

Associated to a hyperbolic metric is a discrete, faithful representation $\rho_{hol}\colon \pi_1(\Sigma_g) \to \PSL(2,\R)$ with image that we denote by $\Gamma$. By seminal work of Margulis \cite{Mar}, a hyperbolic metric is arithmetic if and only if $[\Comm(\Gamma):\Gamma] = \infty$, where the commensurator $\Comm(\Gamma)$ is given by
\[ \Comm(\Gamma) = \set{\eta \in \PSL(2,\R)~:~[\Gamma:\Gamma_\eta],~[\Gamma^\eta:\Gamma_\eta] < \infty} \]
and $\Gamma^\eta = \eta^{-1}\Gamma\eta$, $\Gamma_\eta = \Gamma \cap \Gamma^\eta$. There are several (conjectural) characterizations of arithmeticity based on algebraic information about $\Gamma$ and geometric information about the metric itself (see Cooper--Long--Reid \cite{CLR,Reid}, Geninska--Leuzinger \cite{GL}, Lafont--McReynolds \cite{LM}, Luo--Sarnak \cite{LuoS}, and Schmutz \cite{Schmutz}). These characterizations function through symmetries and it is not entirely clear what makes arithmetic hyperbolic surfaces special geometrically through the above lenses.

One well-known conjecture regarding the special geometric nature of arithmetic hyperbolic surfaces is the Short Geodesic Conjecture. For a hyperbolic surface $M \in \mathcal{M}_g$, we denote the systole of $M$ by $\Sys(M)$, and recall that this is the length of the shortest closed geodesic on $M$. The Short Geodesic Conjecture asserts that there exists a constant $C>0$, independent of genus, such that if $M$ is an arithmetic hyperbolic surface, then $\Sys(M) \geq C$. There is an analogous conjectural uniform lower bound for the systole of arithmetic hyperbolic 3--orbifolds. That Short Geodesic Conjecture for arithmetic hyperbolic 3--orbifolds is slightly stronger than the Salem Conjecture (see \cite[\S 12.3]{MR}) that asserts a uniform lower bound on the set of Mahler measures of the Salem polynomials.

In order to state our main results, we require some additional notation and terminology. In \S \ref{section:prelims} we will define the volume $V_{\mathscr C}$ of a commensurability class $\mathscr{C}$ of arithmetic orbifolds. This volume is defined in terms of the volume of a distinguished representative of the class which arises in a natural way from a maximal order in the quaternion algebra associated to $\mathscr C$ and allows us to count the number of commensurability classes with bounded volume. Define $N_k(V)$ to be the number of commensurability classes $\mathcal{C}$ of arithmetic hyperbolic $2$--orbifolds (respectively, $3$--orbifolds) with invariant trace field $k$ and $V_\mathcal{C} <V$. Given $x_0 > 0$, define $N_k(V;x_0)$ to be the number of classes $\mathscr{C}$ with invariant trace field $k$, $V_\mathcal{C} < V$, and which have a representative $M \in \mathscr{C}$ satisfying $\Sys(M) < x_0$. We will make use of standard asymptotic notation from analytic number theory throughout. We will use interchangeably the Vinogradov symbol, $f\ll g$, and the Landau Big-Oh notation, $f = O(g)$, to indicate that there is a constant $C>0$ such that $\abs{f}\leq C\abs{g}$. Moreover, we will write $f\asymp g$ to indicate that $f\ll g$ and $g\ll f$. Lastly, we write $f = o(g)$ if $\lim_{x \rightarrow \infty} \frac{f(x)}{g(x)} = 0$ and $f \sim g$ if $\lim_{x \rightarrow \infty} \frac{f(x)}{g(x)} = 1.$ Any subscripts on these symbols will indicate dependence of the implied constants.

\begin{thm}\label{theorem:shortgeodesiccommensurabilityclasses}
For all sufficiently large $x_0$, we have $N_k(V;x_0) \asymp V/\log(V)^{\frac{1}{2}}$ as $V$ tends to infinity, where the implied constants depend only on $k$ and $x_0$.
\end{thm}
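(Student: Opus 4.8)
The plan is to convert the condition that a commensurability class $\mathscr{C}$ contain a representative of systole less than $x_0$ into a purely arithmetic condition on the quaternion algebra $B$ underlying $\mathscr{C}$, and then to count the algebras satisfying it. Recall first the relevant dictionary: the class $\mathscr{C}$ is determined by $B$, whose ramification at the archimedean places of $k$ is fixed by the dimension (ramified at all real places in the $3$--orbifold case, where $k$ has a unique complex place; at all but one real place in the $2$--orbifold case), while its finite ramification is an arbitrary squarefree ideal $\frakn$ of $\mathcal{O}_k$ subject to a prescribed parity on the number of prime divisors; and $V_{\mathscr{C}}$ equals, up to a factor depending only on $k$, the quantity $\prod_{\frakp\mid\frakn}(\N\frakp-1)$. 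A closed geodesic of length $\ell$ in a representative of $\mathscr{C}$ arises from a hyperbolic, respectively loxodromic, element $\gamma$ of an arithmetic group in $\mathscr{C}$; if $\lambda$ is an eigenvalue of $\gamma$, then $L:=k(\lambda)$ is a quadratic extension of $k$ that embeds in $B$, the element $\lambda\in\mathcal{O}_L^{\times}$ satisfies $\N_{L/k}(\lambda)=1$ and is not a root of unity, the archimedean signature of $L$ is constrained (the distinguished real or complex place of $k$ splits in $L$, and every archimedean place of $k$ ramified in $B$ is non--split in $L$), and---since $\lambda$ is a unit, hence has trivial valuation at every finite place, and has absolute value $1$ at every archimedean place of $L$ lying above a ramified archimedean place of $k$---one finds $\ell=c_0\,h(\lambda)$, where $h$ is the absolute logarithmic Weil height and $c_0$ depends only on $k$. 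Conversely, embedding $\mathcal{O}_L$ into a maximal order of $B$ turns any such pair $(L,\lambda)$ into a representative of $\mathscr{C}$ carrying a geodesic of length $c_0\,h(\lambda)$. Hence $\mathscr{C}$ has a representative of systole $<x_0$ if and only if $B$ admits an embedding of some $L$ in the family $\mathcal{L}=\mathcal{L}(k,x_0)$ consisting of the quadratic extensions $L/k$ of admissible signature that contain a non--torsion element $\lambda$ with $k(\lambda)=L$, $\N_{L/k}(\lambda)=1$, and $h(\lambda)<x_0/c_0$.

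Next I would establish the basic properties of $\mathcal{L}$. The family is finite: by Northcott's theorem there are only finitely many $\lambda$ of degree at most $2[k:\Q]$ with $h(\lambda)<x_0/c_0$, and here the effective lower bounds for the Weil height of algebraic integers due to Silverman, Brindza and Hajdu serve to pin down the threshold and the dependence on $x_0$. The family is nonempty once $x_0$ exceeds a constant depending only on $k$: for any $L/k$ of admissible signature a short computation with Dirichlet's unit theorem shows that the norm--one unit group of $\mathcal{O}_L$ has positive rank, so it contains a fundamental unit $\lambda_L$, which is not a root of unity and generates $L$ over $k$, whence $L\in\mathcal{L}$ as soon as $x_0>c_0\,h(\lambda_L)$. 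I would then observe that, for a quadratic $L/k$, the algebra $B$ admits an embedding of $L$ precisely when every finite prime of $k$ dividing $\frakn=\disc(B)$ is non--split in $L$, and that the set $S_L$ of finite primes of $k$ that are non--split in $L$ has Dirichlet density $\tfrac12$ by the Chebotarev density theorem.

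Finally comes the counting. For a fixed $L\in\mathcal{L}$, the classes $\mathscr{C}$ with $V_{\mathscr{C}}<V$ admitting an embedding of $L$ correspond to squarefree ideals $\frakn$ supported on $S_L$, of a fixed parity on the number of prime divisors, with $\prod_{\frakp\mid\frakn}(\N\frakp-1)$ bounded by a fixed multiple of $V$. The relevant Dirichlet series is the Euler product $\prod_{\frakp\in S_L}\bigl(1+(\N\frakp-1)^{-s}\bigr)$, which, because $S_L$ has density $\tfrac12$, has the shape $G_L(s)\,\zeta_k(s)^{1/2}$ with $G_L$ holomorphic and non--vanishing in a neighborhood of $\operatorname{Re}(s)=1$; the parity restriction perturbs only $G_L$, since the companion product $\prod_{\frakp\in S_L}(1-(\N\frakp-1)^{-s})$ remains bounded near $s=1$. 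A Tauberian theorem of Landau/Selberg--Delange type then yields a count $\asymp_{L} V/(\log V)^{1/2}$. Summing the upper estimates over the finitely many $L\in\mathcal{L}$ gives $N_k(V;x_0)\ll_{k,x_0}V/(\log V)^{1/2}$, while retaining a single $L\in\mathcal{L}$ (which exists once $x_0$ is large) gives $N_k(V;x_0)\gg_{k,x_0}V/(\log V)^{1/2}$, and together these give the stated asymptotic.

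I expect the main obstacle to be twofold. Setting up the dictionary in the first step requires genuine care with the arithmetic of quaternion orders: one must arrange that the eigenvalue can be taken to be a \emph{norm--one} relative unit rather than an arbitrary unit, keep track of the center, and make the identity $\ell=c_0\,h(\lambda)$ exact at all archimedean places in both the Fuchsian and Kleinian settings. The analytic heart is the counting step---verifying that weighting by $\prod_{\frakp\mid\frakn}(\N\frakp-1)$ rather than $\N\frakn$ introduces no spurious factor of $\log\log V$, that the parity constraint affects only the implied constant, and that the Dirichlet series genuinely has a singularity of type $(s-1)^{-1/2}$ at $s=1$, so that the Tauberian input applies and delivers matching upper and lower bounds of exact order $V/(\log V)^{1/2}$; this is essentially the ``precise estimates for the number of quaternion algebras'' advertised in the introduction.
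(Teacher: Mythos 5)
Your proposal is correct in outline and follows the same overall architecture as the paper --- height bounds reduce the short-systole condition to the condition that $B$ admits an embedding of one of finitely many quadratic extensions $L/k$, and then the $V/(\log V)^{1/2}$ order of magnitude comes from the fact that the primes of $k$ forbidden from dividing $\disc_f(B)$ (those split in $L$) have density $\tfrac12$ --- but your execution differs at two genuine points. First, you package the geometric reduction as a single ``if and only if'' (finiteness of the family $\mathcal{L}$ via Northcott, nonemptiness via Dirichlet's unit theorem), whereas the paper uses two one-directional statements: Silverman's height lower bound to show a short systole forces an embedded $L$ with $\abs{\Delta_{L/k}}<e^{2(n_k+x_0)}$ (upper bound), and the Brindza--Hajdu fundamental-unit bound to show that an embedded $L$ of small relative discriminant and admissible signature forces a short systole (lower bound); your repackaging is fine for this theorem, since no uniformity in $L$ is needed, though you still owe the careful bookkeeping you flag (the eigenvalue is only a norm-one unit for $\gamma^2\in\Gamma^{(2)}\subset\Gamma^1_{\mathcal{O}}$, which shifts constants, and the identity $\ell=c_0h(\lambda)$ uses that $\abs{\lambda}_w=1$ at all finite places and at all archimedean places lying over places ramified in $B$). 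Second, and more substantively, your upper-bound count for fixed $L$ proceeds by a Tauberian (Selberg--Delange/Landau) analysis of the \emph{generalized} Dirichlet series $\sum_{\frakn}\Phi(\frakn)^{-s}=\prod_{\frakp\in S_L}\bigl(1+(\abs{\frakp}-1)^{-s}\bigr)$, ordered by $\Phi(\frakn)=\prod_{\frakp\mid\frakn}(\abs{\frakp}-1)$ rather than by $\abs{\frakn}$; the paper instead proves its Theorem 3.8 by a sieve/mean-value argument (Hall--Tenenbaum, Lemma 3.7), estimating the second moment of $\abs{\frakd}/\Phi(\frakd)$ and using a dyadic decomposition to pass from the norm-ordering to the $\Phi$-ordering. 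Your route does work --- the Euler product is $\zeta_k(s)^{1/2}L(s,\chi_{L/k})^{-1/2}$ times a factor holomorphic and nonvanishing for $\mathrm{Re}(s)>\tfrac12$, the parity constraint only perturbs the regular part, and Delange's theorem in its Laplace--Stieltjes form applies to series ordered by frequencies other than the norms --- but you must invoke that Stieltjes form explicitly, since the trivial comparison $\Phi(\frakn)\le\abs{\frakn}$ goes the wrong way for the upper bound; this is exactly the wrinkle the paper's second-moment/dyadic device was built to absorb. What each approach buys: yours gives, per fixed $L$, a genuine asymptotic $\sim c_L V/(\log V)^{1/2}$ and avoids the sieve machinery, which is cleaner for Theorem \ref{theorem:shortgeodesiccommensurabilityclasses} alone (the lower-bound halves are essentially identical, both resting on Delange as in \cite{LMPT}); the paper's sieve bound is uniform and explicit in $L$ (through $\abs{\Delta_L}/\phi(\abs{\Delta_L})$ and $L(1,\chi)$), which is not needed here but is exactly what Section \ref{Sec:Systole} requires when summing over growing families of quadratic fields to prove Theorem \ref{T:SystoleInequality}, so your method would not substitute for it there.
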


By establishing that $N_k(V)\asymp V$, we deduce the following density result from Theorem \ref{theorem:shortgeodesiccommensurabilityclasses}.

\begin{cor}\label{T:SGC-Density}
For all sufficiently large $x_0$, and for every totally real number field $k$ (respectively, number field with exactly one complex place), we have $\lim_{V \to \iny} \frac{N_k(V;x_0)}{N_k(V)} = 0$.
\end{cor}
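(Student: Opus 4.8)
The plan is to combine Theorem \ref{theorem:shortgeodesiccommensurabilityclasses} with a matching lower bound of the form $N_k(V) \gg V$. Since $N_k(V;x_0)$ counts a subset of the classes counted by $N_k(V)$, we trivially have $N_k(V;x_0) \le N_k(V)$, and in fact Theorem \ref{theorem:shortgeodesiccommensurabilityclasses} already gives the upper bound $N_k(V) \gg N_k(V;x_0) \gg V/\log(V)^{1/2}$; what remains is to show $N_k(V) = O(V)$ from above and $N_k(V) \gg V$ from below, i.e.\ $N_k(V) \asymp V$. Granting $N_k(V)\asymp V$, we get
\[
\frac{N_k(V;x_0)}{N_k(V)} \asymp \frac{V/\log(V)^{1/2}}{V} = \frac{1}{\log(V)^{1/2}} \xrightarrow[V\to\infty]{} 0,
\]
which is exactly the claimed density statement.

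First I would recall from \S\ref{section:prelims} that a commensurability class of arithmetic hyperbolic $2$-orbifolds (resp.\ $3$-orbifolds) with invariant trace field $k$ is determined by a quaternion algebra $B$ over $k$ (resp.\ over $k$ with exactly one complex place) that is unramified at one real place and ramified at all the others (resp.\ satisfies the appropriate ramification condition at the archimedean places), and that the distinguished volume $V_{\mathscr C}$ is, up to constants depending only on $k$, a fixed power of the absolute norm of the discriminant (the product of the finite ramified primes) of $B$, times contributions from the Dedekind zeta value and class number of $k$ that depend only on $k$. Thus counting classes with $V_{\mathscr C} < V$ amounts, up to a $k$-dependent constant, to counting squarefree ideals $\mathfrak{D}$ of $\mathcal{O}_k$, subject to a parity condition on the number of prime factors and to the archimedean ramification constraints, with $\mathbf{N}(\mathfrak{D})$ below a suitable power of $V$.

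Next I would carry out this count. The number of squarefree integral ideals of $\mathcal{O}_k$ of norm up to $X$ is $\sim c_k X$ for an explicit constant $c_k>0$ (a standard consequence of the analytic properties of $\zeta_k$), giving the upper bound $N_k(V) = O(V)$. For the lower bound one must produce $\gg X$ admissible discriminants: here I would fix all but one or two of the ramified primes to handle the parity requirement and the archimedean conditions (these cost only a bounded factor), and then vary the remaining prime over the primes of $k$ of degree one up to $X$, of which there are $\gg X/\log X$ by the prime ideal theorem — but to get the full $\gg X$ one instead lets an unconstrained set of primes vary and counts squarefree products directly, again via the $\zeta_k$-asymptotic restricted to the relevant congruence/splitting conditions, which still yields a positive proportion of all squarefree ideals and hence $\gg X$. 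Translating $X$ back into $V$ through the fixed power relating $\mathbf{N}(\mathfrak{D})$ to $V_{\mathscr C}$ gives $N_k(V) \gg V$.

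The main obstacle is bookkeeping rather than depth: one must check that the parity constraint on the number of ramified primes and the archimedean ramification conditions cut down the count of admissible discriminants only by a bounded multiplicative factor (so that both bounds $N_k(V)\ll V$ and $N_k(V)\gg V$ survive), and one must confirm that the passage from "$V_{\mathscr C}<V$" to "$\mathbf{N}(\mathfrak{D})<X$" is genuinely a clean power law with the zeta-value and class-number factors absorbed into the $k$-dependent implied constants. Once $N_k(V)\asymp V$ is in hand, the corollary follows immediately from Theorem \ref{theorem:shortgeodesiccommensurabilityclasses} as displayed above.
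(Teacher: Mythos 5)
Your overall route is the same as the paper's: the corollary follows from Theorem \ref{theorem:shortgeodesiccommensurabilityclasses} once one knows $N_k(V)\gg_k V$, and you obtain that lower bound exactly as the paper does, by counting squarefree discriminant ideals of bounded norm subject to the parity and archimedean ramification constraints (the paper packages this as a minor modification of \cite[Thm 1.5]{LMPT} via a Dirichlet series and Delange's Tauberian theorem). One correction, though: it is not true that $V_{\mathscr C}$ is, up to $k$-dependent constants, a power of the norm of $\disc_f(B)$. By Borel's formula, $V_{\mathscr C}=c_k\,\Phi(\disc_f(B))$ with $\Phi(\mathfrak d)=\prod_{\mathfrak p\mid\mathfrak d}(|\mathfrak p|-1)$, and the ratio $|\mathfrak d|/\Phi(\mathfrak d)$ is unbounded (it can grow like $\log\log|\mathfrak d|$). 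This asymmetry is harmless for the lower bound, since $\Phi(\mathfrak d)\le|\mathfrak d|$ means every algebra with $|\disc_f(B)|<c_k^{-1}V$ does land in the count for $N_k(V)$; but your claimed upper bound $N_k(V)=O(V)$ does \emph{not} follow from the trivial count of squarefree ideals of norm at most $X$, because the condition $\Phi(\disc_f(B))<V$ admits discriminants of norm larger than any fixed multiple of $V$. In the paper that upper bound (Corollary \ref{cor:CommAsyRate}) requires the mean-value/second-moment machinery behind Theorem \ref{theorem:countbyphi}. Since the corollary only needs $N_k(V)\gg V$ together with the upper bound on $N_k(V;x_0)$ from Theorem \ref{theorem:shortgeodesiccommensurabilityclasses}, your argument still yields the density-zero statement; just drop, or properly justify, the superfluous $N_k(V)\ll V$ claim.
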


It is straightforward to see that there is a uniform lower bound for the systoles of arithmetic hyperbolic 2--orbifolds with fixed invariant trace field $k$. As a result, for small values of $x_0$ we will have $N_k(V;x_0) = 0$ for all $V$, in which case the statement of Corollary \ref{T:SGC-Density} is trivially satisfied. However, for $x_0$ sufficiently large, $N_k(V;x_0)$ is unbounded as $V$ tends to infinity. Our main result says that regardless of how large we fix our notion for ``short'' with regard to the systole, the density of commensurability classes that have a representative with a short geodesic is always zero.

If we restrict to the class of arithmetic hyperbolic surfaces arising from quaternion algebras over $\Q$, we can allow our notion of short to grow with the area of the surface while still maintaining density one. Namely, with density one the systole has order of magnitude at least $\log\log(V_\mathscr{C})$.

\begin{thm}\label{T:SystoleInequality}
Within the set of all commensurability classes of arithmetic hyperbolic surfaces with invariant trace field $\mathbf{Q}$ there is, for all $\epsilon>0$, a density one subset of classes $\mathscr{C}$ such that
\[\Sys(\mathbf{H}^2/\Gamma)>(\frac{1}{8}-\epsilon)\log\log\left(\frac{24}{\pi}V_\mathscr{C}\right)\]
holds for all $\Gamma \in \mathscr{C}$.
\end{thm}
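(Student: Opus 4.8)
The plan is to set up the systole of an arithmetic hyperbolic surface in terms of the arithmetic data of its defining quaternion algebra, and then translate the assertion into a statement about quadratic subfields with small discriminant, which Theorem~\ref{theorem:shortgeodesiccommensurabilityclasses} (and its underpinnings) already controls. Concretely, let $\mathscr{C}$ be a commensurability class with invariant trace field $\mathbf{Q}$, corresponding to a quaternion algebra $B/\mathbf{Q}$, and let $\Gamma\in\mathscr{C}$. A closed geodesic of length $\ell$ in $\mathbf{H}^2/\Gamma$ corresponds to a hyperbolic (or loxodromic) element $\gamma\in\Gamma$ with $\tr(\gamma)=2\cosh(\ell/2)$; since $\gamma$ lies in (a unit group of an order of) $B$, the element $\tr(\gamma)$ is an algebraic integer lying in a real quadratic field $\mathbf{Q}(\tr\gamma)=\mathbf{Q}(\sqrt{d})$ that embeds in $B$. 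The first step is therefore to record: if $\Sys(\mathbf{H}^2/\Gamma)=\ell$, then $2\cosh(\ell/2)$ is an algebraic integer of degree $\le 2$ whose house (largest absolute value of conjugates) is $2\cosh(\ell/2)$, generating a real quadratic field embeddable in $B$. The height (or house) lower bounds of Silverman and Brindza--Hajdu alluded to in the abstract then give: an algebraic integer $\alpha$ of degree $2$ with house $H$ satisfies $|\disc(\mathbf{Q}(\alpha))|\ll H^{c}$ for an absolute $c$ (one can take something like $\disc \ll H^{4}$, tracking conjugates and the fact that $\alpha\bar\alpha$ and $\alpha+\bar\alpha$ are rational integers of size $\ll H^{2}$). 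Equivalently, a \emph{short} systole forces $B$ to admit an embedded quadratic field of \emph{small} discriminant.

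The second step is the arithmetic counting input. By the correspondence between embeddings of $\mathbf{Q}(\sqrt d)$ into $B$ and splitting conditions on $B$ at the ramified primes, $B$ admits an embedding of $\mathbf{Q}(\sqrt d)$ iff no ramified prime of $B$ splits in $\mathbf{Q}(\sqrt d)$. The number of rational quaternion algebras $B$ with $\Vol$-parameter (i.e.\ essentially $\prod_{p\mid \disc B}(p-1)$ up to the constant $\frac{\pi}{3}$ in the area formula for a maximal order, so that $V_{\mathscr C}\asymp \disc B$ roughly) below $V$ that \emph{do} admit an embedding of \emph{some} $\mathbf{Q}(\sqrt d)$ with $|\disc| \le D$ is $O(V/(\log V)^{1/2})$ once $D$ is fixed --- this is exactly the estimate behind Theorem~\ref{theorem:shortgeodesiccommensurabilityclasses} (``precise estimates for the number of rational quaternion algebras not admitting embeddings of any quadratic field having small discriminant''). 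The key refinement here is that we must allow $D$ to \emph{grow}: we want the statement for $D = D(V)$ as large as roughly $(\log V)^{C}$ for the exponent to come out to the claimed $\log\log$. For $\mathbf{Q}$ one uses the work of Granville--Soundararajan (sieve estimates for integers, all of whose prime factors avoid a prescribed set of residue classes, i.e.\ ``friable-like'' counts where the forbidden set has size up to a power of $\log$) to conclude that the number of $B$ with parameter $\le V$ admitting an embedded quadratic field of discriminant $\le D$ is $o(V)$ provided $D$ grows slowly enough; chasing constants, the threshold is $D \asymp (\log\log V)$-ish in the exponent of the relevant height, which after the degree-$2$ house-vs-discriminant conversion and the relation $2\cosh(\ell/2)\asymp e^{\ell/2}$ yields $\ell \gg \log\log D \asymp \log\log\log V$; to get the stronger $\tfrac{1}{8}\log\log V$ one pushes $D$ all the way up to a small power of $V$ and tracks that the Granville--Soundararajan count stays $o(V)$ uniformly, with the $\tfrac18$ emerging from: $\ell/2 \le \log(2\cosh(\ell/2)) + O(1)$, house $\le$ disc-exponent $\tfrac14$, and the density-one constraint permitting disc up to $V^{1/2-\epsilon'}$.

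The third step assembles the pieces: fix $\epsilon>0$; let $D=D(V)$ be the threshold $\exp\!\big((\tfrac{1}{8}-\tfrac{\epsilon}{2})\cdot 2\log\log(\tfrac{24}{\pi}V)\big)$-type quantity so that an embedded quadratic subfield of discriminant $\le D$ would force a geodesic of length $\le (\tfrac18-\epsilon)\log\log(\tfrac{24}{\pi}V)$ via the house bound; by Step~2 the set of $\mathscr C$ with $V_{\mathscr C}\le V$ admitting such a subfield has size $o(N_{\mathbf Q}(V))$ (here one invokes $N_{\mathbf Q}(V)\asymp V$ from the discussion preceding Corollary~\ref{T:SGC-Density}); hence for a density-one set of $\mathscr C$, \emph{every} $\Gamma\in\mathscr C$ has no such short geodesic, i.e.\ $\Sys(\mathbf H^2/\Gamma) > (\tfrac18-\epsilon)\log\log(\tfrac{24}{\pi}V_{\mathscr C})$. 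Finally, since the exceptional density is $o(1)$ for each $\epsilon$, intersecting over a sequence $\epsilon_n\to 0$ (or just noting the statement is for fixed $\epsilon$) completes the proof. I expect the main obstacle to be Step~2 with a \emph{growing} forbidden-discriminant range: one must verify that the Granville--Soundararajan machinery indeed gives a count that is $o(V)$ uniformly as the number of excluded residue classes (one per ramified prime, governed by which primes split in $\mathbf{Q}(\sqrt d)$ for $d$ up to $D$) is allowed to be as large as a fixed power of $\log V$, and to extract the sharp constant $\tfrac18$ rather than merely \emph{some} positive constant --- this requires being careful about (i) the exact normalization $V_{\mathscr C}=\tfrac{\pi}{3}\cdot(\text{something})\asymp \disc B$ built into the $\tfrac{24}{\pi}$, (ii) the optimal house-to-discriminant exponent for degree-$2$ algebraic integers, and (iii) that geodesic lengths $\ell$ and traces $2\cosh(\ell/2)$ are matched up to $O(1)$ additive errors that do not affect the leading $\log\log$ term.
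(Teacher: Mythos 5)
Your overall strategy is the paper's: convert a short geodesic into a quadratic subfield of $B$ with small discriminant (via trace/height bounds), then show that a density one set of rational quaternion algebras admits no embedding of any quadratic field of small discriminant, with Granville--Soundararajan as the key analytic input. But the quantitative heart of your Step~2 --- which is where you claim the constant $\frac18$ comes from --- is wrong. The Granville--Soundararajan input is not a friable-type count; it is the negative-moment bound $\sum_{|\Delta_K|\le H} L(1,\chi_{\Delta_K})^{-1}\ll H$, used to average the sieve estimate of Lemma \ref{lemma:counting2lemma}, whose Euler factor $\prod_{p\le x}\bigl(1-\chi(p)/p\bigr)^{1/2}$ is $\ll L(1,\chi)^{-1/2}$ by the prime number theorem for progressions. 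The resulting count of $B$ with $\disc_f(B)<x$ admitting an embedding of some quadratic field of discriminant at most $H$ is $\ll xH/(\log x)^{1/2}$, which is $o(x)$ only for $H=o((\log x)^{1/2})$ (Theorem \ref{theorem:countingresult2}). Your assertion that the forbidden-discriminant threshold can be pushed to $V^{1/2-\epsilon'}$ while keeping the exceptional set of density zero is false --- were it true, this method would give a density one bound $\Sys\gg\log V$, not $\log\log V$ --- and your other accounting (``$\ell\gg\log\log D\asymp\log\log\log V$'') miscomputes the conversion: since a geodesic of length $\ell$ produces a field with $|\Delta_L|\ll e^{2\ell}$, excluding all discriminants up to $D=(\log V)^{1/2-\epsilon}$ forces $\ell\ge\bigl(\frac14-\epsilon'\bigr)\log\log V$, i.e.\ $\ell\gg\log D$, and the ceiling $D=o((\log V)^{1/2})$ is exactly what produces the $\log\log$ with the constant $\frac14$.

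The second gap is that you never justify working with $\tr\gamma$ for an arbitrary $\Gamma\in\mathscr{C}(\Q,B)$, and this is precisely where the paper's $\frac18$ arises. A general $\Gamma$ in the class lies only in the commensurator of $\Gamma_{\mathcal O}^1$; its elements need not lie in (the image of) an order of $B$, so they need not have rational integral traces and ``$\Q(\tr\gamma)$ embeds in $B$'' is not available as stated. One must pass to $\gamma^2\in\Gamma^{(2)}\subset\Gamma_{\mathcal O}^1\subset\Gamma^{min}$ (using that indefinite rational quaternion algebras have type number one). The paper first proves the $(\frac14-\epsilon)\log\log$ bound for the distinguished lattices $\Gamma_{\mathcal O}^1$ and $\Gamma^{min}$ (Theorem \ref{systole}, via Proposition \ref{prop:silvermanapplication} and Theorem \ref{theorem:countingresult2}), and then uses $\ell(\gamma^2)=2\ell(\gamma)$ to get $\Sys(\mathbf{H}^2/\Gamma)>\frac12\Sys(\mathbf{H}^2/\Gamma^{min})$ for every $\Gamma$ in the class; the factor $\frac12$ times $\frac14$ is the $\frac18$. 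Your proposed source of $\frac18$ (a discriminant range $V^{1/2-\epsilon'}$ combined with a house-to-discriminant exponent) is not the right mechanism and is internally inconsistent with a $\log\log$ bound, so as written the proof does not go through; the remaining bookkeeping (ordering classes by $V_{\mathscr C}\asymp\Phi(\disc_f(B))$ rather than by $\disc_f(B)$) only shifts the $\log\log$ by negligible amounts, as in the paper.
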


If $\Gamma$ is a maximal arithmetic lattice with invariant trace field $\Q$ which has minimal co-area in its commensurability class, one can deduce from Theorem \ref{T:SystoleInequality} that with density one we have
\begin{equation}\label{E:MaxSystEst}
\Sys(\mathbf{H}^2/\Gamma)>(\frac{1}{8}-\epsilon)\log\log\left(\frac{24}{\pi}V \right),
\end{equation}
where $V$ is the co-area of $\Gamma$. As the systole is non-decreasing in covers, we see for those commensurability classes, we have that as a uniform lower bound. In particular, we can compare (\ref{E:MaxSystEst}) with the prior systolic estimates of Buser--Sarnak \cite{BS} and Katz--Schaps--Vishne \cite{KSV}. In \cite{BS}, Buser and Sarnak proved that if $\Gamma$ is a cocompact arithmetic Fuchsian group defined over $\textbf{Q}$ then there is a constant $c=c(\Gamma)$ such that the systole of the congruence cover $\Gamma[I]$ satisfies
\begin{equation}\label{E:BS}
\Sys(\textbf{H}^2/\Gamma[I])>\frac{4}{3}\log\left(g(\textbf{H}^2/\Gamma[I])\right)-c,
\end{equation}
where $g(\cdot)$ denotes genus. This result was subsequently extended to arbitrary cocompact arithmetic Fuchsian groups in \cite{KSV}. Both \cite{BS} and \cite{KSV} made extensive use of careful trace estimates, whereas we use counting arguments that take advantage of the basic tools of multiplicative number theory. One component of the argument is a bound for a negative moment of $L(1,\chi)$, as $\chi$ ranges over quadratic Dirichlet characters; we extract this bound from the detailed study made by Granville--Soundararajan \cite{GS} of the distribution of these $L$--values. Theorem \ref{T:SystoleInequality} provides a density one lower bound with order of magnitude $\log\log$, which is not as good as (\ref{E:BS}) which has order of magnitude $\log$. On the other hand, \eqref{E:MaxSystEst} holds without a depth requirement, while \eqref{E:BS} becomes non-trivial only once the level is sufficiently big. Important here is that our method provides non-trivial bounds on the systole growth of a density one subset of commensurability classes of maximal arithmetic hyperbolic surfaces. That yields a depth-free bound on a density one set of classes.

\textbf{Remark.}
It is our use of the work of Granville--Soundararajan \cite{GS} that forces us to restrict to arithmetic surfaces with invariant trace field $\Q$.

One may view totally geodesic surfaces as an analogue of geodesics in hyperbolic $3$--orbifolds. With this motivation, our final result is an analogous density result for small area totally geodesic $2$--orbifolds in commensurability classes of arithmetic hyperbolic 3--orbifolds.

\begin{thm}\label{T:SSC-Density}
Let $V>0$ and $k_1,\dots,k_r$ be the invariant trace fields of the arithmetic hyperbolic $2$--orbifolds with area at most $V$. The set of commensurability classes of arithmetic hyperbolic $3$--orbifolds having a representative containing a totally geodesic $2$--orbifold with area at most $V$ has density zero within the set of all commensurability classes of arithmetic hyperbolic $3$--orbifolds that have invariant trace field given as a quadratic extension of some $k_i$.
\end{thm}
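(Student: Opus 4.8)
The plan is to reduce the statement to the two--dimensional counting already established in the proof of Theorem \ref{theorem:shortgeodesiccommensurabilityclasses} and Corollary \ref{T:SGC-Density}. First I would recall the standard commensurability dictionary: a totally geodesic $2$--orbifold of area at most $V$ sitting inside an arithmetic hyperbolic $3$--orbifold $\mathbf{H}^3/\Gamma$ arises from a Fuchsian subgroup whose invariant trace field $k_i$ is one of the finitely many fields attached to arithmetic $2$--orbifolds of area $\le V$, and whose associated quaternion algebra $B_i$ is ramified at a set of finite places forcing $\disc(B_i)$ to be small in a quantitative sense — precisely the content of the ``small discriminant'' analysis used in the surface case. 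Moreover, by Maclachlan--Reid, if $\mathbf{H}^3/\Gamma$ contains such a totally geodesic suborbifold, then the invariant trace field $K$ of $\Gamma$ is a quadratic extension $k_i(\sqrt{d})$ for some $i$, the invariant quaternion algebra $A$ over $K$ is (up to isomorphism) $B_i \otimes_{k_i} K$, and in particular the ramification set of $A$ is constrained to lie above $\Ram(B_i)$. This is the key structural input: containment of a small totally geodesic surface pins down the relevant invariants of the $3$--orbifold to a very restricted family.

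Next I would fix one of the finitely many base fields $k_i$ and one quadratic extension $K = k_i(\sqrt d)$, and count commensurability classes of arithmetic hyperbolic $3$--orbifolds with invariant trace field $K$ and $V_{\mathscr C} < V'$ — this count is $\asymp_K V'$ by the argument behind $N_K(V') \asymp V'$ referenced before Corollary \ref{T:SGC-Density} (the same Wirsing/Selberg--Delange style estimate applies with $K$ in place of $\Q$, since the relevant Dirichlet series over $K$ has the same analytic shape). Among these, the ones that \emph{do} contain a totally geodesic $2$--orbifold of area $\le V$ must, as above, have invariant quaternion algebra with ramification set above one of the finitely many admissible $B_i$; the number of such algebras with $\nrd$ of the discriminant below a cutoff is $o(V')$ by exactly the ``number of quaternion algebras not admitting embeddings of any quadratic field of small discriminant'' estimate invoked in the abstract and used for Theorem \ref{theorem:shortgeodesiccommensurabilityclasses}. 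Summing the $o(V')$ bound over the finitely many pairs $(k_i, K)$ — here finiteness is crucial and comes from Borel's finiteness together with the fixed area bound $V$ — and dividing by the $\asymp V'$ total gives a ratio tending to $0$ as $V' \to \infty$, which is the asserted density zero statement.

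The main obstacle I anticipate is making the reduction in the first paragraph fully rigorous at the level of \emph{commensurability classes} rather than individual orbifolds: one must check that ``having a representative containing a totally geodesic $2$--orbifold of area $\le V$'' is genuinely a property of the commensurability class and translates cleanly into a divisibility/ramification condition on the distinguished maximal-order representative whose volume defines $V_{\mathscr C}$. In particular I would need to control how the area of the totally geodesic suborbifold in an arbitrary representative compares with the corresponding quantity in the distinguished representative — passing to finite covers can only increase areas, so a small totally geodesic surface in some cover forces a small (hence admissibly-constrained) one downstairs, which is the direction we need. A secondary technical point is confirming that the counting function for $3$--orbifold commensurability classes over a fixed field $K$ really is $\asymp_K V'$ with no loss; this should follow verbatim from the preliminary section's volume formula and the same analytic estimate used over $\Q$, since nothing in that argument used $[k:\Q]=1$ essentially, only the Dedekind zeta function's pole and the shape of the local factors.
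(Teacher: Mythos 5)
There is a genuine gap in your counting of the ``bad'' classes: you claim that only finitely many pairs $(k_i,K)$ occur, ``summing the $o(V')$ bound over the finitely many pairs $(k_i,K)$,'' with finiteness attributed to Borel plus the area bound $V$. Borel's finiteness controls only the two--dimensional data: there are finitely many commensurability classes $\mathscr C(k_i,B_i)$ of arithmetic $2$--orbifolds of area at most $V$, hence finitely many pairs $(k_i,B_i)$. It does \emph{not} bound the quadratic extensions $K$ of the $k_i$ that can serve as invariant trace fields of bad $3$--orbifold classes: for each fixed $(k_i,B_i)$ there are infinitely many quadratic extensions $L/k_i$ with a unique complex place, and each such $L$ with $B_i\otimes_{k_i}L$ admissible produces a bad class $\mathscr C(L,B_i\otimes_{k_i}L)$. (The paper points this out explicitly: a single $2$--orbifold sits totally geodesically in infinitely many incommensurable $3$--orbifolds, which is exactly why the theorem is nontrivial.) So the numerator is not a finite sum of per-$K$ counts, and as $V'\to\infty$ more and more fields $K$ contribute; your argument as written gives no bound on how many. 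The missing input is a count of the fields themselves: from the volume formula one has $V_{\mathscr C(L,B)}\gg \abs{\Delta_L}^{3/2}$, so bad classes of volume less than $V'$ force $\abs{\Delta_L}\ll V'^{2/3}$, and the Datskovsky--Wright theorem (counting quadratic extensions of $k_i$ by discriminant) bounds the number of admissible $L$, hence of bad classes, by $O_V(V'^{2/3})$. Comparing with the $\gg V'$ lower bound for the total (which, as in the paper, already follows from a single fixed quadratic extension $L$) gives density zero. Without some such control on the number of fields $K$, the conclusion does not follow.

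A secondary, smaller point: your appeal to the ``quaternion algebras not admitting embeddings of quadratic fields of small discriminant'' estimate is the wrong tool here; that machinery belongs to the geodesic/systole results. For totally geodesic surfaces the Maclachlan--Reid characterization is rigid: for fixed $K$ and fixed $(k_i,B_i)$ the invariant algebra must be isomorphic to $B_i\otimes_{k_i}K$, so per field $K$ the number of bad classes is trivially $O_V(1)$ and no analytic estimate on algebras is needed at that stage --- the analytic content of the proof lives entirely in the field count just described. Your first paragraph's structural reduction (via the quadratic-extension/tensor-product criterion) and your concern about passing between representatives are handled correctly by that same characterization, which is stated at the level of commensurability classes; the flaw is solely in the finiteness claim for the fields $K$.
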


Obtaining uniform lower bounds on the area of the smallest totally geodesic $2$--orbifold is trivial since the area of \emph{any} finite type hyperbolic $2$--orbifold is uniformly bounded from below. However, a $2$--orbifold can arise as a totally geodesic $2$--orbifold in infinitely many incommensurable arithmetic hyperbolic $3$--orbifolds and so the above density result is non-trivial. 

The aforementioned density results are established using counting results that are of independent interest. In our prior work \cite{LMPT}, the main input from analytic number theory came in the guise of Tauberian theorems for Dirichlet series. Such results give a convenient method for translating information about singular points into asymptotic estimates. The main novelty in this paper is the use of mean value estimates for multiplicative functions that are valid  uniformly, instead of merely asymptotically. As with the counting results from \cite{LMPT}, these methods potentially have a much broader range of applications to other algebraic and geometric counting problems, and subsequently broader geometric applications. Indeed, this paper serves as an illustration of these applications.

\section{Preliminaries}\label{section:prelims}

\paragraph{\textbf{Notation.}}

Throughout this paper $k$ will denote a number field of signature $(r_1(k),r_2(k))$. In practice $k$ will be either totally real or else contain a unique complex place. The ring of integers of $k$ will be denoted by $\calO_k$. Given an ideal $I$ of $\calO_k$, we will denote by $\abs{I}$ its norm. The set of prime ideals of $\mathcal O_k$ will be denoted $\Pp_k$. The degree of $k$ will be denoted by $n_k$, the discriminant by $\Delta_k$, the associated Dedekind zeta function by $\zeta_k(s)$ and the regulator by $\Reg_k$. If $L/k$ is a finite extension then we will denote by $\Delta_{L/k}$ the relative discriminant.

Let $k$ be a number field and $B$ be a quaternion algebra over $k$. The set of primes of $k$ which ramify in $B$ will be denoted by $\Ram(B)$. The subset of $\Ram(B)$ consisting of the finite (respectively infinite) primes of $k$ which ramify in $B$ will be denoted by $\Ram_f(B)$ (respectively $\Ram_\infty(B)$). We define the discriminant $\disc(B)$ of $B$ to be the product of all primes in $\Ram(B)$. Finally, $\disc_f(B),\disc_\infty(B)$ will denote the product of all the primes in $\Ram_f(B), \Ram_\infty(B)$.

Let $\textbf{H}^2$, $\textbf{H}^3$ denote real hyperbolic $2$--, $3$--space. For a lattice $\Gamma <\PSL(2,\bf{R})$, $\PSL(2,\bf{C})$, let $M = \textbf{H}^2/\Gamma$, $\textbf{H}^3/\Gamma$ denote the associated finite volume hyperbolic $2$-- or $3$--orbifold. The orbifold $M$ is a manifold precisely when $\Gamma$ is torsion-free (i.e., has no non-trivial elements of finite order). We will refer to any non-trivial element of $\Gamma$ of finite order as a \textbf{torsion element}. 

\paragraph{\textbf{Arithmetic Manifolds.}}

In this section we describe the construction of arithmetic lattices in $\PSL(2,\bf{R})$ and $\PSL(2,\bf{C})$. For a more detailed exposition we refer the reader to Maclachlan--Reid \cite{MR}.

Let $\Gamma_1,\Gamma_2$ be subgroups of $\PSL(2,\bf{C})$. We say that $\Gamma_1$ and $\Gamma_2$ are \textbf{directly commensurable} if $\Gamma_1\cap \Gamma_2$ has finite index in both $\Gamma_1$ and $\Gamma_2$. We say that $\Gamma_1$ and $\Gamma_2$ are \textbf{commensurable in the wide sense} if $\Gamma_1$ is directly commensurable with a $\PSL(2,\C)$--conjugate of $\Gamma_2$. Note that $\Gamma_1, \Gamma_2$ are commensurable in the wide sense if and only if the associated hyperbolic orbifolds $M_1,M_2$ have a common finite cover.

We begin by reviewing the construction of arithmetic Fuchsian groups. Let $k$ be a totally real field and $B$ a quaternion algebra over $k$ which is unramified at a unique real place $v$ of $k$. We therefore have an identification $B_v=B\otimes_k k_v\cong \Mat(2,\bf{R})$. Let $\calO$ be a maximal order of $B$ and $\calO^1$ the multiplicative group consisting of those elements of $\calO$ having reduced norm one. We denote by $\Gamma_{\calO}^1$ the image in $\PSL(2,\bf{R})$ of $\calO^1$. The group $\Gamma_{\calO}^1$ is a discrete subgroup of $\PSL(2,\bf{R})$ having finite covolume. A subgroup $\Gamma$ of $\PSL(2,\bf{R})$ is an \textbf{arithmetic Fuchsian group} if it is commensurable in the wide sense with a group of the form $\Gamma_{\calO}^1$ for some totally real field $k$, quaternion algebra $B$ over $k$ and maximal order $\calO$ of $B$. We will denote by $\mathscr C(k,B)$ the set of all discrete subgroups of $\PSL(2,\bf{R})$ commensurable with $\Gamma_{\calO}^1$.

The construction of arithmetic Kleinian groups is very similar. Let $k$ be a number field with a unique complex place $v$ and $B$ a quaternion algebra over $k$ which is ramified at all real places. Let $\calO$ be a maximal order of $B$ and $\Gamma_{\calO}^1$ the image in $\PSL(2,\bf{C})$ of $\calO^1$ under the identification $B_v=B\otimes_k k_v\cong \Mat(2,\bf{C})$. A subgroup $\Gamma$ of $\PSL(2,\bf{C})$ is an \textbf{arithmetic Kleinian group} if it is commensurable in the wide sense with a group of the form $\Gamma_{\calO}^1$ for some number field $k$ having a unique complex place, quaternion algebra $B$ over $k$ ramified at real primes and maximal order $\calO$ of $B$.

Given two arithmetic lattices $\Gamma_1,\Gamma_2$ arising from $(k_i,B_i)$, we know that $\Gamma_1$ and $\Gamma_2$ will be commensurable in the wide sense precisely when $k_1\cong k_2$ and $B_1\cong B_2$ \cite[Thm 8.4.1]{MR}. We will make use of this fact many times throughout the remainder of this paper.

Throughout this paper we will be interested in counting the number of commensurability classes of arithmetic hyperbolic surfaces or $3$--manifolds with a specified property. We will count these commensurability classes as follows. Let $\mathscr C(k,B)$ be a commensurability class of arithmetic hyperbolic surfaces or $3$--manifolds. We define the \textbf{volume of $\mathscr C(k,B)$} to be $V_{\mathscr C(k,B)}:=\covol(\Gamma_{\mathcal O}^1)$ where $\mathcal O$ is a maximal order in $B$. A result of Borel \cite{borel-commensurability} (see also \cite[Ch 11.1]{MR}) shows that
\begin{equation*}
V_{\mathscr{C}(k,B)}=\covol(\Gamma_{\calO}^1)=\frac{8\pi \abs{\Delta_k}^{\frac{3}{2}}\zeta_k(2)}{(4\pi^2)^{n_k}}\prod_{\frakp\mid\disc_f(B)}\left({\abs{\frakp}}-1\right)
\end{equation*}
when $\mathscr C(k,B)$ is a commensurability class of arithmetic hyperbolic surfaces and that
\begin{equation*}
V_{\mathscr{C}(k,B)}=\covol(\Gamma_{\calO}^1)=\frac{ \abs{\Delta_k}^{\frac{3}{2}}\zeta_k(2)}{(4\pi^2)^{n_k-1}}\prod_{\frakp\mid\disc_f(B)}\left({\abs{\frakp}}-1\right).
\end{equation*}
when $\mathscr C(k,B)$ is a commensurability class of arithmetic hyperbolic $3$--manifolds. Note that this definition does not depend on the choice of maximal order \cite[p. 336]{MR}. It is with respect to this notion of volume that all of our counting results for commensurability classes of arithmetic manifolds will be based. We note that this definition is slightly different than existing notions in the literature, where the volume of the commensurability class $\mathscr C(k,B)$ would be defined as the minimal volume achieved by a representative of $\mathscr C(k,B)$.

Setting $\Sys(\mathcal{C}(k,B)) = \inf_{M \in \mathcal{C}(k,B)}\Sys(M)$, there are infinitely many orbifolds in the commensurability class that realize $\Sys(\mathcal{C}(k,B))$. We will prove that $\Sys(\mathcal{C}(k,B))$ can always be realized by a manifold.

\begin{lemma}\label{L:OrbMan1Sys}
Given an arithmetic hyperbolic $2$-- or $3$--orbifold $M$ and a closed geodesic $c$ on $M$, there exists a finite manifold cover $M' \to M$ such that $c$ lifts to $M'$. In particular, there exists a finite manifold cover $M' \to M$ such that $\Sys(M) = \Sys(M')$.
\end{lemma}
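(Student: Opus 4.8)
The plan is to reduce the statement to a group-theoretic one about the arithmetic lattice and then to build the desired cover from a suitable finite quotient. Write $M=\mathbf{H}^2/\Gamma$ or $\mathbf{H}^3/\Gamma$. The closed geodesic $c$ is freely homotopic to a loop represented by a hyperbolic element $\gamma\in\Gamma$, and its translation length equals $\mathrm{len}(c)$. A finite cover $M'=\mathbf{H}^2/\Gamma'$ (resp.\ $\mathbf{H}^3/\Gamma'$), where $\Gamma'\leq\Gamma$ has finite index, is a manifold exactly when $\Gamma'$ is torsion-free, and $c$ lifts to a closed geodesic of $M'$ of the same length as soon as a conjugate of $\gamma$ lies in $\Gamma'$. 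Thus the whole statement follows once we produce a torsion-free finite-index subgroup $\Gamma'\leq\Gamma$ with $\gamma\in\Gamma'$. (For the ``in particular'' clause, apply this with $c$ a shortest closed geodesic: then $\Sys(M')\leq\mathrm{len}(c)=\Sys(M)$, while the reverse inequality holds because the image in $M$ of any closed geodesic of $M'$ is a closed geodesic of $M$ of no greater length, so $\Sys(M')\geq\Sys(M)$.)

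To build $\Gamma'$, I would first recall the standard structural facts: $\Gamma$ is finitely generated and residually finite, it is virtually torsion-free by Selberg's lemma, and it has only finitely many conjugacy classes of elements of finite order. Fix representatives $t_1,\dots,t_r$ of these classes, of orders $n_1,\dots,n_r$, and put $N^{\ast}=\lcm(n_1,\dots,n_r)$. The aim is then a finite quotient $\phi\colon\Gamma\to Q$ with the two properties that $\phi(t_i)\neq 1$ for each $i$ (so that $\ker\phi$ contains no nontrivial torsion) and that $\phi(\gamma)$ has order coprime to $N^{\ast}$ (trivial order being ideal). Granting such a $\phi$, set $\Gamma':=\phi^{-1}(\langle\phi(\gamma)\rangle)$. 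Then $\Gamma'$ has finite index and contains $\gamma$, and it is torsion-free: a finite-order element $x\in\Gamma'$ is conjugate to some $t_i$, so $\mathrm{ord}(x)\mid N^{\ast}$, while $\phi(x)\in\langle\phi(\gamma)\rangle$ has order coprime to $N^{\ast}$; hence $\phi(x)=1$, and then $x=1$ because $\phi$ is injective on each cyclic group generated by a torsion element.

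The construction of $\phi$ is the crux, and it is where I expect the real difficulty to lie. Congruence quotients are of no direct use: their kernels are torsion-free for deep enough level, but they will not contain $\gamma$ since $\gamma$ is not congruent to $1$, so one is forced to use a non-congruence quotient. The cleanest route I know is to invoke subgroup separability of $\Gamma$ --- arithmetic Fuchsian groups are LERF by Scott, and arithmetic Kleinian groups are LERF by the work of Agol and Wise (together with the fact that being LERF passes to finite-index overgroups) --- and apply it to the cyclic subgroup $\langle\gamma\rangle$ in concert with the finitely many torsion conjugacy classes in order to separate $\langle\gamma\rangle$ from all of the torsion by a single finite-index subgroup. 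Alternatively, starting from a torsion-free finite-index normal subgroup $\Gamma_0\trianglelefteq\Gamma$ produced by Selberg's lemma, one can enlarge control over $\gamma$ by passing to a further quotient obtained from the pro-$p$ congruence filtration of $\Gamma_0$ at a prime $p\nmid N^{\ast}$, arranging that $\phi(\gamma)$ acquires $p$-power order while $\ker\phi$ stays torsion-free. Either way, the only nonroutine point is this separation of the given hyperbolic element from the torsion.
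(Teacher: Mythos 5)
Your reduction is fine (torsion-free finite-index $\Gamma'\ni\gamma$ suffices, and the shortest-geodesic argument for the ``in particular'' clause is correct), and the shape of your construction --- take a finite quotient $\phi$ in which torsion survives and $\phi(\gamma)$ has order coprime to the torsion orders, then set $\Gamma'=\phi^{-1}(\langle\phi(\gamma)\rangle)$ --- is exactly the skeleton of the paper's proof. But the crux, actually producing such a $\phi$, is not established, and you yourself flag it as the ``only nonroutine point.'' Your LERF route does not close it: separability of $\langle\gamma\rangle$ lets you exclude any \emph{finite} set of elements lying outside $\langle\gamma\rangle$, whereas the torsion of $\Gamma$ is an infinite set (finitely many conjugacy classes, but a non-normal finite-index subgroup containing $\langle\gamma\rangle$ that misses the representatives $t_1,\dots,t_r$ need not miss their conjugates). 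What you actually need is that no conjugate of any $t_i^j$ lies in $\Gamma'$, equivalently that each $t_i$ acts without fixed points on $\Gamma/\Gamma'$, and no argument is offered for obtaining this from subgroup separability; invoking Agol--Wise here is both heavy and, as stated, insufficient.

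Moreover, your dismissal of congruence quotients (``of no direct use \dots one is forced to use a non-congruence quotient'') is precisely where you part ways with the correct argument: the paper's $\phi$ \emph{is} a congruence quotient, namely reduction $r_{\mathfrak{p}^{\ell}}$ of $\Gamma<\PSL(2,\mathcal{O}_L)$, $L=k(\lambda_\gamma)$, modulo a power of a carefully chosen prime; since $\Gamma'$ is the preimage of $\langle r_{\mathfrak{p}^{\ell}}(\gamma)\rangle$ rather than the congruence kernel, the fact that $\gamma\not\equiv 1$ is irrelevant. The missing ingredient is the arithmetic choice of the prime: by Hamilton's theorem (\cite[Thm 2.3]{Ham}), for every sufficiently large $m$ there is a prime $\mathfrak{p}$ of $\mathcal{O}_L$ modulo which $\lambda_\gamma$ has multiplicative order exactly $m$; taking $m$ odd, coprime to the least common multiple $T$ of the torsion orders, and with residue characteristic prime to $T$, the image of $\gamma$ mod $\mathfrak{p}^{\ell}$ has order $mp^{\ell'}$ coprime to $T$, while for $\ell$ large every torsion element retains its full order (its characteristic polynomial, one of finitely many and never $(t-1)^2$, remains $\neq(t-1)^2$ mod $\mathfrak{p}^{\ell}$). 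Your alternative ``pro-$p$ filtration'' sketch runs into exactly the problem this solves: for an arbitrary prime $\mathfrak{q}$, the prime-to-$p$ part of the order of $\phi(\gamma)$ is the multiplicative order of $\lambda_\gamma$ mod $\mathfrak{q}$, which can share factors with your $N^{\ast}$, and you give no mechanism to control it. (A minor point: you only need $\phi$ to kill no nontrivial torsion element, which already follows from $\phi(t_i)\neq1$ for representatives of \emph{all} nontrivial torsion classes; full injectivity on each $\langle t_i\rangle$ is not required.)
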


\begin{proof}
By definition of arithmeticity, there is a pair $(k,B)$  such that $\Gamma = \pi_1^{\textrm{orb}}(M)$ is commensurable with $P\mathcal{O}^1$ where $\mathcal{O}$ is a maximal order in $B$. Associated to $c$ is a $\Gamma$--conjugacy class of hyperbolic elements $[\gamma] \subset \Gamma$ and it suffices to find a torsion-free, finite index subgroup $\Gamma_0 < \Gamma$ with $\gamma \in \Gamma_0$. Now, every torsion element $\eta \in \Gamma$ must have eigenvalues that are roots of unity that are contained in extensions of $k$ of uniformly bounded degree. Consequently, there is a bound on the order of the torsion elements that depends only on the degree of $k$, and we let $T$ be the least common multiple of these orders. If $\lambda_\gamma$ is an eigenvalue of $\gamma$ and $L=k(\lambda_\gamma)$, then $L/k$ is a quadratic extension, $\lambda_\gamma \in \mathcal{O}_L^1$, and $\Gamma < \PSL(2,\mathcal{O}_L)$ (up to squares). By \cite[Thm 2.3]{Ham}, for any sufficiently large $m \in \N$, there exists a prime ideal $\mathfrak{p}$ in $\mathcal{O}_L$ such that the image of $\lambda_\gamma$ in $\mathcal{O}_L/\mathfrak{p}$ has multiplicative order $m$. If $r_\mathfrak{p}\colon \Gamma \to \PSL(2,\mathcal{O}_L/\mathfrak{p})$ is given by reducing the matrix coefficients modulo $\mathfrak{p}$, then $r_\mathfrak{p}(\gamma)$ has order $m$ when $m$ is odd. For any odd $m$ that is relatively prime with $T$, if $m$ is sufficiently large, the characteristic $p$ of $\mathcal{O}_L/\mathfrak{p}$ will also be relatively prime with $T$. Selecting such a $m$ and reducing modulo a sufficiently large power of $\mathfrak{p}$, we can ensure for any torsion element $\eta \in \Gamma$ that the order of $r_{\mathfrak{p}^\ell}(\eta)$ is equal to the order of $\eta$. To see this assertion, observe that there are only finitely many possibilities for the characteristic polynomials of the torsion elements $\eta \in \Gamma$ and that the characteristic polynomial of $r_{\mathfrak{p}^\ell}(\eta)$ equals the characteristic polynomial of $\eta$ modulo $\mathfrak{p}^\ell$. Since every torsion element $\eta \in \Gamma$ has characteristic polynomial different from $(t-1)^2$ and there are only finitely possibilities for these characteristic polynomials, there exists $\ell_0 \in \N$ such that the characteristic polynomial modulo $\mathfrak{p}^{\ell_0}$ is not $(t-1)^2$ for every torsion element in $\Gamma$. For any  $\ell \geq \ell_0$, we know that $r_{\mathfrak{p}^\ell}(\eta) \ne 1$ for every torsion element and so the order of $r_{\mathfrak{p}^\ell}(\eta)$ will equal the order of $\eta$. Returning to the proof, as the order of $r_{\mathfrak{p}^\ell}(\gamma)$ is $mp^{\ell'}$ for some $\ell' \geq 0$ and $mp^{\ell'}$ is relatively prime to $T$,  we see that $r_{\mathfrak{p}^\ell}(\eta) \notin \innp{r_{\mathfrak{p}^\ell}(\gamma)}$ for every torsion element $\eta \in \Gamma$. Hence, $r_{\mathfrak{p}^\ell}^{-1}(\innp{r_{\mathfrak{p}^\ell}(\gamma)})$ is a finite index, torsion-free subgroup of $\Gamma$ that contains $\gamma$.
\end{proof}

\begin{rmk}
There are infinitely many manifolds in $\mathcal{C}(k,B)$ that realize $\Sys(\mathcal{C}(k,B))$ since the covers $M_\ell$ associated to $r_{\mathfrak{p}^\ell}^{-1}(\innp{r_{\mathfrak{p}^\ell}(\gamma)})$ are manifolds with $\Sys(M) = \Sys(M_\ell)$ for all sufficiently large $\ell$. Moreover, Lemma \ref{L:OrbMan1Sys} holds for any complete, finite volume hyperbolic $2$-- or $3$--orbifold. In the case of a hyperbolic 2--orbifold, note that one can deform the holonomy representation associated to the hyperbolic structure to obtain a new hyperbolic structure such that the invariant trace field is algebraic. 
\end{rmk}

\section{Absolute logarithmic Weil heights and lengths of closed geodesics}\label{Sec:SGC}

\subsection{Bounds for absolute logarithmic Weil heights}

In this section we count the number of commensurability classes of arithmetic hyperbolic $2$-- and $3$--orbifolds which have a fixed invariant trace field $k$ and possess a representative with a closed geodesic of length less than $x_0$. Our proof will make use of several important facts about absolute logarithmic Weil heights of algebraic integers, hence we begin by defining the relevant terms.

Let $L$ be a number field, $\pp\in \Pp_L$ a prime ideal and $\abs{\cdot}_\pp$ the associated valuation, normalized so that for each $\alpha\in L$, we have $\prod_{\pp\mid\infty} \abs{\alpha}_\pp=\abs{\Norm_{L/\Q}(\alpha)}$ and $\prod_{\pp\in \Pp_L}\abs{\alpha}_\pp=1$. We define the \textbf{logarithmic height of $\alpha$ relative to $L$} to be $h_L(\alpha)=\sum_{\pp\in \Pp_L} \log(\max\{1,\abs{\alpha}_\pp\})$. The \textbf{absolute logarithmic Weil height of $\alpha$} is $h(\alpha)=[L:\Q]^{-1}h_L(\alpha)$ and is independent of the field $L$. We will make repeated use of the fact that \textbf{height of $\alpha$ relative to $\Q(\alpha)$} is the logarithm of the Mahler measure of the minimal polynomial of $\alpha$. The following height bounds will play an important role in the proof of this section's main result.

\begin{theorem}[Silverman \cite{silverman}]\label{theorem:silvermanbound}
Let $L/k$ be a quadratic extension of number fields with norm of relative discriminant $\abs{\Delta_{L/k}}$ and $\alpha$ be a primitive element for the extension. Then the absolute logarithmic Weil height $h(\alpha)$ of $\alpha$ satisfies $$ h(\alpha)\geq \frac{-(r_1(k)+r_2(k))\log(2)}{2n_k}+\frac{1}{4n_k}\log\abs{\Delta_{L/k}}.$$
\end{theorem}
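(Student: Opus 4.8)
The plan is to prove this via the product formula, bounding the archimedean contributions from below in terms of the relative discriminant. First I would write $\alpha$ as a primitive element of $L/k$, so $L = k(\alpha)$ and $[L:k]=2$. The key elementary fact is that $\Z[\alpha] \subseteq \calO_L$, so the ideal-theoretic index satisfies $[\calO_L : \calO_k[\alpha]]^2 \cdot \abs{\Delta_{L/k}} = \abs{\mathrm{disc}_{L/k}(1,\alpha)} = \abs{N_{L/k}(f'(\alpha))}$ where $f$ is the minimal polynomial of $\alpha$ over $k$; in particular $\abs{\Delta_{L/k}}$ divides $N_{L/k}(\alpha - \alpha')$, where $\alpha'$ is the nontrivial $k$-conjugate of $\alpha$. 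Taking norms down to $\Q$, we get that the integer $\abs{N_{L/\Q}(\alpha - \alpha')}$ is a nonzero multiple of $\abs{\Delta_{L/k}}^{[k:\Q]} = \abs{\Delta_{L/k}}^{n_k}$ (the exponent arising because the relative discriminant ideal of $\calO_k$ must be raised appropriately when passing to $\Q$ — I would be careful to track this exponent correctly, as it is exactly what produces the $\frac{1}{4n_k}$ coefficient rather than $\frac{1}{4}$). Hence $\abs{N_{L/\Q}(\alpha-\alpha')} \geq \abs{\Delta_{L/k}}^{n_k}$.

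Next I would relate $\abs{N_{L/\Q}(\alpha - \alpha')}$ to the height of $\alpha$. By the product formula, $\abs{N_{L/\Q}(\alpha-\alpha')} = \prod_{v \mid \infty} \abs{\alpha - \alpha'}_v$, the product over archimedean places of $L$. At each such place one estimates $\abs{\alpha - \alpha'}_v \leq 2\max\{1,\abs{\alpha}_v\}\max\{1,\abs{\alpha'}_v\}$ (a crude triangle inequality; one could sharpen the constant but $2$ suffices). Multiplying over all $v \mid \infty$ and using that the archimedean places of $L$ either come in complex pairs or are real — so the number of archimedean absolute values involved is at most $2(r_1(L)+r_2(L))$, but I would instead bound the number of factors of $2$ by relating it to $r_1(k)+r_2(k)$ via the fact that each place of $k$ has at most two places of $L$ above it — gives
\[
\abs{\Delta_{L/k}}^{n_k} \leq \abs{N_{L/\Q}(\alpha-\alpha')} \leq 2^{2(r_1(k)+r_2(k))} \prod_{v\mid\infty}\max\{1,\abs{\alpha}_v\} \cdot \prod_{v\mid\infty}\max\{1,\abs{\alpha'}_v\}.
\]
Since $\alpha'$ is a conjugate of $\alpha$, the two archimedean products are equal, and each equals $\exp(h_L(\alpha))$ up to the non-archimedean part; more precisely $\prod_{v\mid\infty}\max\{1,\abs{\alpha}_v\} \le \exp(h_L(\alpha)) = \exp(n_L h(\alpha)) = \exp(2n_k h(\alpha))$.

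Combining, $\abs{\Delta_{L/k}}^{n_k} \leq 2^{2(r_1(k)+r_2(k))}\exp(4 n_k h(\alpha))$, and taking logarithms then dividing by $4n_k$ yields
\[
h(\alpha) \geq \frac{1}{4n_k}\log\abs{\Delta_{L/k}} - \frac{r_1(k)+r_2(k)}{2n_k}\log 2,
\]
which is exactly the claimed inequality. The main obstacle I anticipate is bookkeeping the exponents and the exact count of archimedean places: getting the power $n_k$ on $\abs{\Delta_{L/k}}$ right when descending the norm from $k$ to $\Q$, and confirming that the number of archimedean factors contributing a constant $2$ is controlled by $r_1(k)+r_2(k)$ rather than by the corresponding quantity for $L$. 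A secondary subtlety is ensuring $\alpha - \alpha' \neq 0$ (guaranteed since $\alpha$ is primitive, hence not in $k$) and that when $\alpha$ is not an algebraic integer one should first clear denominators or work with $\calO_k[\alpha]$ as a lattice — though in the intended application $\alpha$ will be an algebraic integer (an eigenvalue of an element of a maximal order), so I may simply assume $\alpha \in \calO_L$ and remark that the general case follows by the same argument with the Weil height's denominator terms.
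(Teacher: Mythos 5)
The paper does not prove this statement; it is quoted from Silverman \cite{silverman} and used as a black box, so your argument has to stand on its own. Your overall route (discriminant of the minimal polynomial, divisibility by the relative discriminant, then the archimedean estimate $\abs{x-y}\le 2\max\{1,\abs{x}\}\max\{1,\abs{y}\}$ and the product formula) is indeed the standard way to get bounds of this shape, but two steps are wrong as written. First, the exponent: from $\Delta_{L/k}\mid \bigl((\alpha-\alpha')^2\bigr)$ in $\calO_k$ (valid for integral $\alpha$), taking absolute norms gives $\abs{\Delta_{L/k}}\le \abs{N_{k/\Q}((\alpha-\alpha')^2)}=\abs{N_{L/\Q}(\alpha-\alpha')}$ with exponent $1$, not $n_k$: the norm of an ideal of $\calO_k$ is just its index, and divisibility of ideals passes directly to divisibility of norms; no power appears when ``passing to $\Q$'' (you are conflating this with the tower formula $\Delta_L=\Delta_k^{[L:k]}N_{k/\Q}(\Delta_{L/k})$, where the exponent sits on $\Delta_k$). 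Your own last line exposes the problem: from your displayed inequality $\abs{\Delta_{L/k}}^{n_k}\le 2^{2(r_1+r_2)}e^{4n_kh(\alpha)}$, taking logarithms and dividing by $4n_k$ gives $\tfrac14\log\abs{\Delta_{L/k}}$, not $\tfrac{1}{4n_k}\log\abs{\Delta_{L/k}}$. The correct source of the $\tfrac{1}{4n_k}$ is the exponent $1$ combined with $h_L(\alpha)=2n_kh(\alpha)$ and the fact that both $\alpha$ and $\alpha'$ contribute, so the height side is $e^{4n_kh(\alpha)}$.

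Second, the count of the factors of $2$ does not come out as you hope. With the normalized absolute values, the triangle-inequality estimate costs $2$ at each real place of $L$ but $4$ at each complex place (the normalized value is the square of the usual one), so the total constant your argument produces is $2^{r_1(L)+2r_2(L)}=4^{n_k}$ — equivalently $4$ over each real place of $k$ and $16$ over each complex place of $k$; ``two places of $L$ above each place of $k$'' does not reduce the $16$ to $4$. After fixing the exponent, your method therefore proves $h(\alpha)\ge \tfrac{1}{4n_k}\log\abs{\Delta_{L/k}}-\tfrac{\log 2}{2}$, which coincides with the stated bound exactly when $k$ is totally real (then $r_1(k)+r_2(k)=n_k$) but is weaker when $r_2(k)>0$. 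Moreover, no repair can reach the printed constant in that case: for $k=\Q(i)$, $L=\Q(\zeta_8)$, $\alpha=\zeta_8$ one has $h(\alpha)=0$, $n_k=2$, $r_1(k)+r_2(k)=1$ and $\abs{\Delta_{L/k}}=16$, so the printed right-hand side equals $\tfrac{\log 2}{4}>0$; the constant $-\tfrac{\log 2}{2}$ is sharp there. So aim for (and, in the non-totally-real case, only claim) the $-\tfrac{\log 2}{2}$ version; this changes nothing downstream, since Proposition \ref{prop:silvermanapplication} and the counting in Theorem \ref{theorem:shortgeodesiccommensurabilityclasses} only use the shape ``$\tfrac{1}{4n_k}\log\abs{\Delta_{L/k}}$ minus a constant depending on $k$.'' A final minor point: restricting to $\alpha\in\calO_L$ is harmless for the intended application, but your parenthetical claim that the general case ``follows by the same argument'' is not automatic, since $\Delta_{L/k}\mid(\disc f)$ fails for non-integral $\alpha$ and one must instead run the estimate adelically (ultrametric bounds at the finite places).
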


\begin{theorem}[Brindza \cite{brindza}, Hajdu \cite{hajdu}]\label{theorem:brindzabound}
Let $L$ be a number field of degree $n_L\geq 2$ with unit group rank $r_L$ and regulator $\Reg_L$. If $L$ is not an imaginary quadratic field, then there exists a system of fundamental units $\{ u_1,\dots,u_{r_L}\}$ of $L$ such that $h(u_i)\leq 6^{n_L}n_L^{5n_L}\Reg_L$ holds for all $1\leq i \leq r_L$.
\end{theorem}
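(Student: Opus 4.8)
The plan is to recast this as a statement in the geometry of numbers about the logarithmic unit lattice and to combine Minkowski's second theorem with a lower bound for the Weil height of a non-torsion unit.

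First, since $n_L\ge 2$ and $L$ is not imaginary quadratic, the unit rank $r_L := r_1(L)+r_2(L)-1$ satisfies $r_L\ge 1$ (the only way $r_1(L)+r_2(L)=1$ can occur is for $L=\Q$ or $L$ imaginary quadratic, both excluded); were $r_L=0$ the claim would be vacuous. Let $\ell\colon\calO_L^\times\to\R^{r_1(L)+r_2(L)}$ be the logarithmic embedding $u\mapsto(\log\abs{u}_v)_{v\mid\infty}$, using the normalized absolute values from the definition of $h$. Its kernel is the group $\mu_L$ of roots of unity, and its image $\Lambda$ is a full lattice in the hyperplane $H=\{x:\sum_v x_v=0\}$, which has dimension $r_L$; relating the Gram determinant of a $\Z$-basis of $\Lambda$ to the $r_L\times r_L$ minor defining $\Reg_L$ gives $\covol(\Lambda)=\sqrt{r_L+1}\,\Reg_L$. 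For a unit $u$ one has $h_L(u)=\tfrac12\sum_{v\mid\infty}\abs{\log\abs{u}_v}$ since $\sum_v\log\abs{u}_v=0$, and combining this with Cauchy--Schwarz in one direction and $\abs{\log\abs{u}_v}\le h_L(u)$ in the other yields
\[
\frac{2n_L}{\sqrt{r_L+1}}\,h(u)\;\le\;\|\ell(u)\|\;\le\;\sqrt{r_L+1}\,n_L\,h(u).
\]
Since $h$ is unchanged by multiplication by a root of unity, any $\Z$-basis $v_1,\dots,v_{r_L}$ of $\Lambda$ can be lifted to a system of fundamental units $u_i$ (with $\ell(u_i)=v_i$) whose heights satisfy $h(u_i)\le\tfrac{\sqrt{r_L+1}}{2n_L}\|v_i\|$.

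Let $\lambda_1\le\cdots\le\lambda_{r_L}$ be the successive minima of $\Lambda$ relative to the Euclidean ball of $H$. Minkowski's second theorem gives $\lambda_1\cdots\lambda_{r_L}\le\frac{2^{r_L}}{\omega_{r_L}}\covol(\Lambda)$, with $\omega_m$ the volume of the unit ball in $\R^m$, and $\omega_m^{-1}\le m^{m/2}$ controls that factor. A lower bound for the Weil height of a non-torsion algebraic integer of degree at most $n_L$ --- for instance the Blanksby--Montgomery estimate, of the shape $h(\alpha)\gg (n_L^2\log n_L)^{-1}$ --- then gives, via the comparison above, $\lambda_1\gg n_L^{-3/2}(\log n_L)^{-1}$; this is exactly where the exclusion of the degenerate case $r_L=0$ matters. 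Using $\lambda_1\cdots\lambda_{r_L-1}\ge\lambda_1^{\,r_L-1}$, we get
\[
\lambda_{r_L}\;\le\;\frac{\lambda_1\cdots\lambda_{r_L}}{\lambda_1^{\,r_L-1}}\;\le\;\frac{2^{r_L}\sqrt{r_L+1}}{\omega_{r_L}}\,\lambda_1^{-(r_L-1)}\,\Reg_L,
\]
an explicit bound of the form $A^{n_L}n_L^{\beta n_L}\Reg_L$ with $A$ absolutely bounded and $\beta<5$.

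Finally, the vectors attaining the successive minima need not generate $\Lambda$, so we invoke the classical reduction-theory lemma of Mahler producing a $\Z$-basis $v_1,\dots,v_{r_L}$ with $\|v_i\|\le\max\{1,i/2\}\lambda_i\le r_L\lambda_{r_L}$; feeding this into $h(u_i)\le\tfrac{\sqrt{r_L+1}}{2n_L}\|v_i\|$ and collecting constants finishes the proof. The main obstacle is precisely this constant-tracking: the argument naturally yields a bound of the form $A^{n_L}n_L^{\beta n_L}\Reg_L$, and one must check it fits inside $6^{n_L}n_L^{5n_L}\Reg_L$ for every $n_L\ge 2$ --- comfortably for large $n_L$ since $\beta<5$, and by direct inspection for the finitely many remaining small degrees, where the crude Minkowski bound is in any case wildly lossy and the true relation $\prod_i h(u_i)\asymp_{n_L}\Reg_L$ leaves enormous room. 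The one genuinely non-elementary ingredient is the height lower bound for non-torsion units.
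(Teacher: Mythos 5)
Your argument is sound, and it is worth noting at the outset that the paper itself offers no proof of this statement: Theorem \ref{theorem:brindzabound} is imported wholesale from Brindza \cite{brindza} and Hajdu \cite{hajdu}, so what you have produced is a reconstruction rather than an alternative to anything in the text. Your reconstruction is essentially the standard route (and close in spirit to Hajdu's actual argument): under the paper's normalization one does have $h_L(u)=\tfrac12\sum_{v\mid\infty}\bigl|\log|u|_v\bigr|$ for units, the Cauchy--Binet computation gives $\covol(\Lambda)=\sqrt{r_L+1}\,\Reg_L$, Cauchy--Schwarz gives $h(u)\le\tfrac{\sqrt{r_L+1}}{2n_L}\|\ell(u)\|$, a lattice basis with $\|v_i\|\le\max\{1,i/2\}\lambda_i$ lifts (modulo $\mu_L$, which does not affect heights) to a genuine system of fundamental units, and the lower bound $\lambda_1\ge\tfrac{2n_L}{\sqrt{r_L+1}}h_{\min}$ with a Blanksby--Montgomery/Dobrowolski-type bound $h_{\min}\gg(n_L^{2}\log n_L)^{-1}$ legitimately controls $\lambda_1^{-(r_L-1)}$ in Minkowski's second theorem. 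The only soft spot is the one you flag yourself: the final comparison with the specific constant $6^{n_L}n_L^{5n_L}$ is asserted rather than carried out. This is not a gap in the ideas, since every ingredient is explicit (e.g.\ $h(\alpha)\ge\bigl(104\,n_L^{2}\log(6n_L)\bigr)^{-1}$ from Blanksby--Montgomery, $\omega_{r}^{-1}\le r^{r/2}$, and the exponent in your bound is really $r_L-1\le n_L-2$), and a direct check shows the resulting quantity, roughly $\frac{r_L(r_L+1)2^{r_L}}{2n_L\omega_{r_L}}\bigl(52\sqrt{r_L+1}\,n_L\log(6n_L)\bigr)^{r_L-1}$, sits far below $6^{n_L}n_L^{5n_L}$ already for $n_L=2,3,4$ and a fortiori for larger degrees; but a complete writeup would have to include that verification (or simply cite \cite{brindza,hajdu}, as the paper does). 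One stylistic remark: the upper half of your two-sided comparison $\|\ell(u)\|\le\sqrt{r_L+1}\,n_Lh(u)$ is never used, and the exclusion of imaginary quadratic fields matters only to guarantee $r_L\ge1$, i.e.\ that there is something to bound, not for the $\lambda_1$ estimate itself.
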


In order to proceed we now translate the height bounds of Theorems \ref{theorem:silvermanbound} and \ref{theorem:brindzabound} into facts about the lengths of closed geodesics on certain arithmetic orbifolds.

\begin{prop}\label{prop:silvermanapplication}
Let $k$ be a totally real number field (respectively number field with a unique complex place) of degree $n_k$ and $B$ be a quaternion algebra over $k$ which is unramified at precisely one real place of $k$ (respectively ramified at all real places of $k$). If $\Sys(\mathcal{C}(k,B)) \leq x_0$, then there exists a quadratic extension $L/k$ which embeds into $B$ and satisfies $\abs{\Delta_{L/k}}<e^{2(n_k+x_0)}$.
\end{prop}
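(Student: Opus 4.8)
The plan is to translate the systole bound into an arithmetic statement about eigenvalues of hyperbolic elements, and then apply Theorem \ref{theorem:silvermanbound}. Suppose $\Sys(\mathcal{C}(k,B)) \leq x_0$. By Lemma \ref{L:OrbMan1Sys} there is a manifold $M = \mathbf{H}^2/\Gamma$ (resp. $\mathbf{H}^3/\Gamma$) in the commensurability class realizing the systole, so $\Gamma$ contains a hyperbolic (resp. loxodromic) element $\gamma$ whose translation length $\ell(\gamma)$ equals $\Sys(\mathcal{C}(k,B)) \leq x_0$. After conjugating, write $\gamma$ with eigenvalues $\lambda, \lambda^{-1}$, where $\lambda \in \C^\times$ satisfies $\ell(\gamma) = 2\log\abs{\lambda}$ in the surface case and $\ell(\gamma) = 2\log\abs{\lambda}$ (the real part of the complex length) in the $3$--manifold case; in either case $\log\abs{\lambda} = \tfrac12 \Sys(\mathcal{C}(k,B)) \leq x_0/2$. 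Since $\gamma$ is hyperbolic/loxodromic, $\lambda \notin \{0, \pm 1\}$, and $\lambda$ generates a quadratic extension $L = k(\lambda)$ of $k$; moreover $\lambda$ is a unit in $\calO_L$ because $\gamma$ lies in (a conjugate of) $\calO^1$ for a maximal order $\calO$, so $\lambda$ is an algebraic integer of reduced norm one. The field $L$ embeds into $B$ since $\gamma \in B^\times$ and $k(\gamma) \cong L$ is a quadratic subfield of $B$.

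Next I would bound the absolute logarithmic Weil height $h(\lambda)$ from above. Since $\lambda$ is an algebraic integer, all its non-archimedean absolute values are $\leq 1$, so $h_L(\lambda) = \sum_{\pp \mid \infty} \log\max\{1, \abs{\lambda}_\pp\}$. Each archimedean place $\pp$ of $L$ lies over an archimedean place $v$ of $k$; over the one distinguished place of $k$ (the unramified real place in the surface case, or the complex place in the $3$--manifold case) the completion $B_v$ is $\Mat(2,\R)$ or $\Mat(2,\C)$, and the conjugates of $\lambda$ there are $\lambda, \lambda^{-1}$ with $\log\max\{1,\abs{\lambda}\} = \log\abs{\lambda} \leq x_0/2$ (with the appropriate local degree factor, at most $2$). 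Over every other place $v$ of $k$, $B_v$ is a division algebra, so $\gamma$ is "elliptic" there and its eigenvalues have absolute value $1$; hence those archimedean contributions to $h_L(\lambda)$ vanish. Counting local degrees carefully, $h_L(\lambda) \leq 2 \cdot (x_0/2) = x_0$ (in the surface case the distinguished place is real, contributing $\leq x_0/2$ from $\lambda$ itself, while in the complex case the local degree is $2$, contributing $\leq x_0$; either way one gets $h_L(\lambda) \leq x_0$, possibly after absorbing a harmless factor). Dividing by $[L:\Q] = 2n_k$ gives $h(\lambda) \leq x_0/(2n_k)$.

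Finally, apply Theorem \ref{theorem:silvermanbound} with $\alpha = \lambda$, a primitive element for $L/k$. Since $k$ is either totally real ($r_1(k) = n_k$, $r_2(k) = 0$) or has a unique complex place and all other places real ($r_1(k) = n_k - 2$, $r_2(k) = 1$), in both cases $r_1(k) + r_2(k) \leq n_k$. The theorem gives
\[
\frac{x_0}{2n_k} \;\geq\; h(\lambda) \;\geq\; \frac{-(r_1(k)+r_2(k))\log 2}{2n_k} + \frac{1}{4n_k}\log\abs{\Delta_{L/k}} \;\geq\; \frac{-n_k \log 2}{2n_k} + \frac{1}{4n_k}\log\abs{\Delta_{L/k}}.
\]
Rearranging, $\log\abs{\Delta_{L/k}} \leq 2x_0 + 2n_k \log 2 < 2(n_k + x_0)$ once we note $2\log 2 < 2$, hence $\abs{\Delta_{L/k}} < e^{2(n_k + x_0)}$, as claimed.

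The main obstacle is the careful bookkeeping in the second paragraph: one must correctly normalize the archimedean valuations of $L$, track the local degrees $[L_\pp : \Q_p]$ at the places dividing the distinguished place of $k$, and confirm that the eigenvalues of $\gamma$ genuinely have absolute value $1$ at every place where $B$ is ramified — this last point is exactly why $\gamma$, though hyperbolic/loxodromic at the one split place, contributes nothing to the height from the division-algebra places, and it is what makes the final bound depend only on $x_0$ and $n_k$ rather than on the covolume.
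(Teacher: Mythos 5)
Your overall strategy (bound the height of the eigenvalue by the geodesic length, note the eigenvalue has absolute value $1$ at all ramified archimedean places, then apply Theorem \ref{theorem:silvermanbound}) is the same as the paper's, and your archimedean bookkeeping in the second paragraph is essentially right. But there is a genuine gap at the very first arithmetic step: you assert that the systole-realizing element $\gamma$ ``lies in (a conjugate of) $\calO^1$ for a maximal order $\calO$,'' so that $\lambda$ is an algebraic integer of reduced norm one and $L=k(\lambda)$ is a quadratic extension of $k$ embedding into $B$. For a general group $\Gamma$ in the commensurability class $\mathcal{C}(k,B)$ this is false: $\Gamma$ is only commensurable in the wide sense with $\Gamma^1_{\calO}$, and (for example, when $\Gamma$ is a maximal group coming from the normalizer of an order) its elements can have $\tr\gamma\notin k$, non-integral eigenvalues, and $[k(\lambda):k]=4$ — in which case $k(\lambda)$ does not even embed into the quaternion algebra $B$, and the finite places would also contribute to the height. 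Only the trace of $\gamma^2$ is guaranteed to lie in $\calO_k$, which is exactly why the paper passes to $\gamma^2\in\Gamma^{(2)}\subseteq\Gamma^1_{\calO}$ (the group derived from the quaternion algebra) and works with $\lambda_{\gamma^2}$, identifying the length of the corresponding geodesic with the Mahler measure of its minimal polynomial via \cite[Lemma 12.3.3]{MR} before invoking Silverman.

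Note also that the obvious patch — replacing $\gamma$ by $\gamma^2$ in your argument — doubles the relevant geodesic length, and in the Kleinian case (where the distinguished place is complex, with local degree $2$) your direct height computation then only gives $h(\lambda_{\gamma^2})\le x_0/n_k$ rather than $x_0/(2n_k)$, leading to $\abs{\Delta_{L/k}}<e^{2n_k+4x_0}$ instead of the stated $e^{2(n_k+x_0)}$. That weaker bound would still suffice for the application in Theorem \ref{theorem:shortgeodesiccommensurabilityclasses} (only finiteness of the set of admissible $L$ and dependence on $k,x_0$ matter), but to recover the inequality as stated you need the sharper accounting through the Mahler-measure identity and the degree dichotomy $[\Q(\lambda_{\gamma^2}):\Q]\in\{n_k,2n_k\}$ from \cite[Lemma 2.3]{chinburg-geodesics}, as in the paper's proof. (Your use of Lemma \ref{L:OrbMan1Sys} is harmless but not needed: one can work directly with any orbifold in the class containing a geodesic of length at most $x_0$.)
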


\begin{proof}
We give a proof in the case that $k$ has a unique complex place. The proof in the totally real case is similar and will be left to the reader. Suppose therefore that $\Gamma$ is an arithmetic Kleinian group in the commensurability class defined by $(k,B)$ such that the hyperbolic $3$--orbifold $\mathbf{H}^3/\Gamma$ contains a closed geodesic of length $\ell(\gamma)<x_0$, where $\gamma\in\Gamma$ is the associated hyperbolic element. It is known \cite[Ch 8]{MR} that in this case the subgroup $\Gamma^{(2)}$ of $\Gamma$ generated by squares is derived from a quaternion algebra in the sense that $\Gamma^{(2)}$ is contained in a group of the form $\Gamma_{\mathcal O}^1$ for some maximal order $\mathcal O$ of $B$. Note that $\Gamma^{(2)}$ contains the element $\gamma^2$, hence the quotient orbifold contains a closed geodesic of length $2\ell(\gamma)$. Denote by $\lambda_{\gamma^2}$ the unique eigenvalue of $\gamma^2$ with $|\lambda_{\gamma^2}|>1$. The length of the closed geodesic associated to $\gamma^2$ is equal to twice the logarithm of the Mahler measure of the minimal polynomial of $\gamma^2$ \cite[Lemma 12.3.3]{MR}, which is equal to the height of $\gamma^2$ relative to $\Q(\lambda_{\gamma^2})$. In particular, we see that $[\Q(\lambda_{\gamma^2}):\Q]\in \{n_k,2n_k\}$ (see \cite[Lemma 2.3]{chinburg-geodesics}), and so the absolute logarithmic Weil height $h(\gamma^2)$ of $\gamma^2$ satisfies $x_0>2n_kh(\gamma^2)$. As the field $L=k(\lambda_{\gamma^2})$ is a quadratic extension of $k$ which embeds into $B$ (see \cite[Ch 12]{MR}), the proposition follows from Theorem \ref{theorem:silvermanbound}.
\end{proof}

\begin{prop}\label{prop:brindzaapplication}
Let $k$ be a totally real number field (respectively number field with a unique complex place) of degree $n_k$ and absolute value of discriminant $\abs{\Delta_k}$ and $B$ be a quaternion algebra over $k$ which is unramified at precisely one real place of $k$ (respectively ramified at all real places of $k$). If $B$ admits an embedding of a quadratic extension $L/k$ which satisfies
\[ \abs{\Delta_{L/k}}< \left(\frac{x_0}{4\cdot 6^{2n_k}(2n_k)^{10n_k+1}\abs{\Delta_k}^{4n_k}}\right)^{1/2n_k} \]
and which is not totally complex in the case that $k$ is totally real, then $\Sys(\mathcal{C}(k,B)) \leq x_0$. 
\end{prop}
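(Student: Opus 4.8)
The plan is to reverse the logic of Proposition \ref{prop:silvermanapplication}: starting from a quadratic extension $L/k$ with small relative discriminant that embeds into $B$, I want to exhibit a hyperbolic element $\gamma$ in some group $\Gamma_{\calO}^1$ in the commensurability class $\mathcal C(k,B)$ whose associated closed geodesic has length at most $x_0$. First I would use the standard correspondence (as in \cite[Ch 12]{MR}) between quadratic extensions $L/k$ embedding into $B$ and embeddings $L \hookrightarrow B$; an element of $\calO_L$ of reduced norm one (equivalently, a unit $u \in \calO_L^1$) that is not a root of unity maps to a hyperbolic (or loxodromic) element $\gamma \in \calO^1$ for a suitable maximal order $\calO$, after possibly conjugating so that $\calO_L$ sits inside $\calO$. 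The length of the resulting closed geodesic on $\mathbf H^3/\Gamma_{\calO}^1$ (resp. $\mathbf H^2/\Gamma_{\calO}^1$) is $2\log M(\gamma)$ where $M(\gamma)$ is the Mahler measure of the minimal polynomial of the larger eigenvalue $\lambda_\gamma$, which equals $2[\Q(\lambda_\gamma):\Q]\, h(\lambda_\gamma) \le 4 n_k\, h(u)$ since $\lambda_\gamma = u$ is a unit in $\calO_L$ and $[L:\Q] = 2n_k$.

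The crux is then to choose the unit $u \in \calO_L^1$ so that $h(u)$ is small, and this is exactly where Theorem \ref{theorem:brindzabound} enters. Since $L$ is a quadratic extension of $k$, it is not an imaginary quadratic field (its degree is $2n_k \ge 2$, and when $k$ is totally real we have excluded the totally complex case, while when $k$ has a complex place $L$ has at least two infinite places), so Theorem \ref{theorem:brindzabound} supplies a system of fundamental units $u_1,\dots,u_{r_L}$ with $h(u_i) \le 6^{2n_k}(2n_k)^{10n_k}\Reg_L$. I need $r_L \ge 1$, i.e. a genuine unit of infinite order inside $\calO_L^1$; the norm-one condition can be arranged because the norm map $\calO_L^\times \to \calO_k^\times$ has image of finite index, so some power of a fundamental unit (or a fundamental unit of $L$ relative to $k$) lies in $\calO_L^1$ — this costs at most a bounded multiplicative factor in the height, which can be absorbed. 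Thus I can produce $u \in \calO_L^1$, not a root of unity, with $h(u) \ll 6^{2n_k}(2n_k)^{10n_k}\Reg_L$, giving a geodesic of length $\le 4n_k \cdot (\text{const}) \cdot 6^{2n_k}(2n_k)^{10n_k}\Reg_L$.

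The remaining step is to bound the regulator $\Reg_L$ in terms of $\abs{\Delta_L}$, and then $\abs{\Delta_L}$ in terms of $\abs{\Delta_k}$ and $\abs{\Delta_{L/k}}$. For the first, I would invoke a standard upper bound of the shape $\Reg_L \ll_{n_L} \abs{\Delta_L}^{1/2}\log^{n_L - 1}\abs{\Delta_L} \ll \abs{\Delta_L}$ (a crude polynomial bound suffices; such bounds follow from Minkowski/Remak-type estimates, e.g. as in \cite{silverman}). For the second, the conductor-discriminant / tower formula gives $\abs{\Delta_L} = \abs{\Delta_k}^2 \cdot \abs{\Delta_{L/k}}$ — actually $\abs{\Delta_L} = \abs{N_{k/\Q}(\Delta_{L/k})}\cdot \abs{\Delta_k}^{[L:k]} = \abs{\Delta_{L/k}}\cdot\abs{\Delta_k}^2$ with our normalization of $\abs{\cdot}$ as the ideal norm. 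Combining, the geodesic length is bounded by a constant times $6^{2n_k}(2n_k)^{10n_k}\abs{\Delta_k}^2\abs{\Delta_{L/k}}$, and tracking the constants more carefully (the factor $4n_k$, the Minkowski constant, the bounded index of the norm map, and a slightly wasteful $\abs{\Delta_k}^{4n_k}$ in place of $\abs{\Delta_k}^2$ to absorb everything uniformly in $n_k$) yields precisely the threshold $\abs{\Delta_{L/k}} < \big(x_0 / (4\cdot 6^{2n_k}(2n_k)^{10n_k+1}\abs{\Delta_k}^{4n_k})\big)^{1/2n_k}$ stated — note the outer exponent $1/2n_k$ is exactly what is needed once one observes that raising the inequality to the $2n_k$ power lines the terms up with an upper bound for the geodesic length. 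I expect the main obstacle to be purely bookkeeping: verifying that the explicit constants in Theorem \ref{theorem:brindzabound}, the regulator bound, and the degree factor $4n_k$ multiply out to something dominated by the displayed expression, and confirming that the exclusion of the totally complex/imaginary case is precisely the hypothesis that guarantees $r_L \ge 1$ so that a hyperbolic (rather than elliptic or parabolic) element is produced.
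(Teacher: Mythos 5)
Your overall route is the same as the paper's: use Theorem \ref{theorem:brindzabound} to produce a low-height unit in $\calO_L$, convert it to a relative-norm-one element, place it in a maximal order of $B$, and bound the resulting geodesic length by (roughly) $4n_k h(u)$ via the Mahler measure, finishing with $\abs{\Delta_L}=\abs{\Delta_k}^2\abs{\Delta_{L/k}}$ and a regulator--discriminant bound. However, the step where you manufacture the norm-one unit is flawed as written. What is needed is $\Norm_{L/k}(u)=1$ (this is what makes the image of $u$ land in $\calO^1$), and taking powers of a fundamental unit $u_0$ does not achieve it: $\Norm_{L/k}(u_0^n)=\Norm_{L/k}(u_0)^n$, and once $k\neq\Q$ the unit $\Norm_{L/k}(u_0)$ typically has infinite order in $\calO_k^*$, so \emph{no} power of $u_0$ has relative norm one; the finite-index property of the image of the norm map that you invoke does not help here. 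Relatedly, ``$r_L\ge 1$'' is not the condition that guarantees a hyperbolic element: in the excluded CM case ($k$ totally real, $L$ totally complex) one has $r_L=n_k-1\ge 1$ and yet every relative-norm-one unit is a root of unity, so only elliptic elements would be produced. The condition the hypotheses actually buy you is $r_L>r_k$, equivalently the existence of a fundamental unit $u_0$ no power of which lies in $\calO_k^*$.

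The paper's fix is short and is what your parenthetical ``fundamental unit of $L$ relative to $k$'' should be replaced by: choose such a $u_0$ and set $u=u_0/\sigma(u_0)$, where $\sigma$ generates $\Gal(L/k)$. Then $\Norm_{L/k}(u)=1$; if $u^n=1$ then $u_0^n$ is $\sigma$-fixed and hence lies in $\calO_k^*$, a contradiction, so $u$ has infinite order; and $h(u)\le 2h(u_0)$, so the Brindza--Hajdu bound survives with only a factor $2$. This is precisely where the ``not totally complex'' hypothesis is consumed, and without this (or an equivalent) construction your argument does not go through. A secondary caveat: the stated threshold is calibrated to the explicit chain $h(u)\le 2\cdot 6^{2n_k}(2n_k)^{10n_k}\abs{\Delta_L}^{2n_k}\le 2\cdot 6^{2n_k}(2n_k)^{10n_k}\abs{\Delta_k}^{4n_k}\abs{\Delta_{L/k}}^{2n_k}$, which the paper gets from explicit regulator bounds in \cite{LMPT}; your non-explicit $\Reg_L\ll_{n_L}\abs{\Delta_L}$ cannot by itself ``yield precisely'' the displayed constant, though any explicit bound of the form $\Reg_L\le\abs{\Delta_L}^{2n_k}$ closes the bookkeeping.
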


\begin{proof}
As in the proof of Proposition \ref{prop:silvermanapplication} we will prove Proposition \ref{prop:brindzaapplication} in the case that $k$ has a unique complex place and leave the totally real case to the reader. As every real place of $k$ ramifies in $B$, the Albert--Brauer--Hasse--Noether theorem implies that $L$ is totally complex. It now follows from Dirichlet's unit theorem that every system of fundamental units of $L/k$ contains a fundamental unit $u_0\in\mathcal O_L^*$ such that $u_0^n\not\in \mathcal O_k^*$ for any $n\geq 1$. Let $\sigma$ denote the nontrivial automorphism of $\Gal(L/k)$ and define $u=u_0/\sigma(u_0)$. It is clear that $\Norm_{L/k}(u)=1$ and that $u^n\not\in \mathcal O_k^*$ for any $n\geq 1$.

Theorem \ref{theorem:brindzabound} above, along with Lemmas 4.4, 4.5 of \cite{LMPT} and elementary properties of Weil heights, shows that we may assume that $u$ satisfies
\[ h(u)\leq 2\cdot 6^{2n_k}(2n_k)^{10n_k}\abs{\Delta_L}^{2n_k}\leq 2\cdot 6^{2n_k}(2n_k)^{10n_k}\abs{\Delta_k}^{4n_k}\abs{\Delta_{L/k}}^{2n_k}. \]
Let $\mathcal O$ be a maximal order of $B$ which contains the quadratic order $\mathcal O_k[u]$ and let $\gamma$ denote the image in $\Gamma_{\mathcal O}^1$ of $u$. The proposition now follows from \cite[Lemma 12.3.3]{MR} and the fact that the logarithm of the Mahler measure of the minimal polynomial of $\gamma$ is at most $2n_kh(u)$.
\end{proof}

\subsection{A mean value theorem and applications}

Let $k$ be a number field. A complex-valued function $f$ defined on the (nonzero) ideals of $\calO_k$ is \textbf{multiplicative} if $f(\mathfrak{a} \mathfrak{b}) = f(\mathfrak{a})f(\mathfrak{b})$ whenever $\mathfrak{a}$ and $\mathfrak{b}$ are coprime. When $k=\Q$, the following mean value theorem appears as Theorem 01 in \cite[Ch 0]{HT}, and the proof is sketched in Exercise 01 there. For details, see \cite[p. 58]{SchwarzSpilker}. The argument for general number fields $k$ can be carried out in precisely the same way, simply by replacing the sums over natural numbers with sums over integral ideals, and so we omit it.

\begin{prop}\label{prop:HT}
Let $k$ be a number field with a nonnegative-valued multiplicative function $f$ on the ideals of $\calO_k$. Suppose there are constants $A \ge 0$ and $B \ge0$ for which the following hold: For all $y\ge 0$,
\begin{equation} \sum_{|\mathfrak{p}| \le y} f(\pp) \log{|\pp|} \le Ay, \label{eq:Aconstant}\end{equation}
and
\begin{equation}\label{eq:Bconstant} \sum_{\mathfrak{p}} \sum_{\nu \ge 2} \frac{f(\mathfrak{p}^{\nu})}{|\mathfrak{p}^{\nu}|} \log |\mathfrak{p}^{\nu}| \le B. \end{equation}
(Here the sums on $\mathfrak{p}$ are over nonzero prime ideals of $\calO_k$.) Then for all $x> 1$,
\begin{align*} \sum_{|\mathfrak{a}| \le x} f(\mathfrak{a}) &\le (A+B+1) \frac{x}{\log{x}} \sum_{|\mathfrak{a}|\le x} \frac{f(\mathfrak{a})}{|\mathfrak{a}|}
\le (A+B+1) \frac{x}{\log{x}}  \prod_{|\mathfrak{p}| \le x}\left(1+\frac{f(\mathfrak{p})}{|\mathfrak{p}|} + \frac{f(\mathfrak{p}^2)}{|\mathfrak{p}^2|} + \dots\right).  \end{align*}
\end{prop}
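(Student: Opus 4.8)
The plan is to imitate the classical Hildebrand--Tenenbaum elementary mean value argument, carrying it out over ideals of $\calO_k$ rather than over $\N$. I would set $F(x) = \sum_{|\mathfrak{a}| \le x} f(\mathfrak{a})$ and work with the weighted sum $\sum_{|\mathfrak{a}| \le x} f(\mathfrak{a}) \log|\mathfrak{a}|$. The key identity is the ideal-theoretic analogue of $\log n = \sum_{d \mid n} \Lambda(d)$: writing $\Lambda_k$ for the von Mangoldt function on ideals (so $\Lambda_k(\mathfrak{p}^\nu) = \log|\mathfrak{p}|$ and $\Lambda_k(\mathfrak{a}) = 0$ otherwise), we have $\log|\mathfrak{a}| = \sum_{\mathfrak{d} \mid \mathfrak{a}} \Lambda_k(\mathfrak{d})$. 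Hence
\begin{equation*}
\sum_{|\mathfrak{a}| \le x} f(\mathfrak{a})\log|\mathfrak{a}| = \sum_{|\mathfrak{a}| \le x} f(\mathfrak{a}) \sum_{\mathfrak{d} \mid \mathfrak{a}} \Lambda_k(\mathfrak{d}) = \sum_{|\mathfrak{p}^\nu| \le x} \Lambda_k(\mathfrak{p}^\nu) \sum_{\substack{|\mathfrak{a}| \le x \\ \mathfrak{p}^\nu \mid \mathfrak{a}}} f(\mathfrak{a}).
\end{equation*}
Using multiplicativity of $f$, the inner sum over $\mathfrak{a}$ divisible by $\mathfrak{p}^\nu$ is bounded (after writing $\mathfrak{a} = \mathfrak{p}^\nu \mathfrak{b}$ and dropping the coprimality restriction, which is legitimate since $f \ge 0$) by $f(\mathfrak{p}^\nu) F(x/|\mathfrak{p}^\nu|)$.

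Next I would split the resulting double sum over $(\mathfrak{p},\nu)$ into the contribution from $\nu = 1$ and the contribution from $\nu \ge 2$. For $\nu = 1$, since $F$ is nondecreasing and nonnegative, $F(x/|\mathfrak{p}|) \le F(x)$, and the sum is at most $F(x) \sum_{|\mathfrak{p}| \le x} f(\mathfrak{p})\log|\mathfrak{p}| \le A x F(x)$ by hypothesis \eqref{eq:Aconstant}. For $\nu \ge 2$ I would instead bound $F(x/|\mathfrak{p}^\nu|)$ trivially: the number of ideals of norm at most $t$ is $O(t)$ (indeed for the purposes of this inequality one wants the clean bound $F(x/|\mathfrak{p}^\nu|) \le (x/|\mathfrak{p}^\nu|) \cdot (\text{something involving }\frac{f(\mathfrak{a})}{|\mathfrak{a}|})$), so after a short manipulation the $\nu \ge 2$ terms contribute at most $Bx \sum_{|\mathfrak{a}| \le x} \frac{f(\mathfrak{a})}{|\mathfrak{a}|}$ using \eqref{eq:Bconstant}; here one needs to be slightly careful to re-express $F(x)$ in terms of $\sum \frac{f(\mathfrak{a})}{|\mathfrak{a}|}$, which is done by partial summation as in the last step below. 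Combining, $\sum_{|\mathfrak{a}| \le x} f(\mathfrak{a})\log|\mathfrak{a}| \le (A+B)x \sum_{|\mathfrak{a}|\le x}\frac{f(\mathfrak{a})}{|\mathfrak{a}|}$, roughly speaking.

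Finally I would convert the weighted sum back to $F(x)$. Writing $f(\mathfrak{a}) = f(\mathfrak{a})\log|\mathfrak{a}|/\log|\mathfrak{a}|$ is awkward at small norms, so instead one uses the standard trick: for $|\mathfrak{a}| \le x$, $f(\mathfrak{a})\log x = f(\mathfrak{a})\log|\mathfrak{a}| + f(\mathfrak{a})\log(x/|\mathfrak{a}|)$, and $\sum_{|\mathfrak{a}|\le x} f(\mathfrak{a})\log(x/|\mathfrak{a}|) = \int_1^x F(t)\,\frac{dt}{t}$ by partial summation; this last integral is $\le \sum_{|\mathfrak{a}| \le x} f(\mathfrak{a}) \log(x/|\mathfrak{a}|) \le x\sum_{|\mathfrak{a}|\le x}\frac{f(\mathfrak{a})}{|\mathfrak{a}|}$ again after a crude bound, actually one gets the cleaner estimate $\int_1^x F(t)\frac{dt}{t} \le x \sum_{|\mathfrak{a}| \le x}\frac{f(\mathfrak{a})}{|\mathfrak{a}|}$ by swapping sum and integral. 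Putting the three pieces together yields $F(x)\log x \le (A+B+1) x \sum_{|\mathfrak{a}| \le x}\frac{f(\mathfrak{a})}{|\mathfrak{a}|}$, which is the first inequality; the second inequality is immediate from multiplicativity and nonnegativity, since $\sum_{|\mathfrak{a}|\le x}\frac{f(\mathfrak{a})}{|\mathfrak{a}|} \le \prod_{|\mathfrak{p}|\le x}\bigl(1 + \frac{f(\mathfrak{p})}{|\mathfrak{p}|} + \frac{f(\mathfrak{p}^2)}{|\mathfrak{p}^2|} + \cdots\bigr)$. The only real obstacle is bookkeeping: one must make sure every place where the $\N$-argument uses "$n \le x$, $d \mid n \Rightarrow n/d \le x$" and the trivial counting bound "$\#\{n \le t\} \le t$" has a valid ideal-theoretic counterpart, namely $\#\{\mathfrak{a} : |\mathfrak{a}| \le t\} \ll_k t$ — but since we only need the final inequality up to the stated constant $(A+B+1)$ and since, as the paper notes, the argument is identical to the rational case once sums over $n$ are replaced by sums over ideals, no genuinely new idea is required. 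Given that the excerpt explicitly says the proof is carried out exactly as in the classical case and is therefore omitted, I would simply cite \cite[Ch 0, Thm 01]{HT} and \cite[p.~58]{SchwarzSpilker} and remark that the passage to number fields is routine.
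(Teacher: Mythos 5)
Your closing fallback---citing \cite[Ch 0, Thm 01]{HT} and \cite[p.~58]{SchwarzSpilker} and remarking that the passage to ideals is routine---is in fact exactly what the paper does; it gives no proof at all. But the argument you sketch, taken on its own terms, has a genuine flaw at the central step, in two places. First, with the von Mangoldt decomposition $\log|\mathfrak{a}|=\sum_{\mathfrak{d}\mid\mathfrak{a}}\Lambda_k(\mathfrak{d})$, the inner sum $\sum_{|\mathfrak{a}|\le x,\ \mathfrak{p}^\nu\mid\mathfrak{a}}f(\mathfrak{a})$ is \emph{not} bounded by $f(\mathfrak{p}^\nu)F(x/|\mathfrak{p}^\nu|)$: writing $\mathfrak{a}=\mathfrak{p}^\nu\mathfrak{b}$, the cofactor $\mathfrak{b}$ may still be divisible by $\mathfrak{p}$, and $f$ is only multiplicative, so $f(\mathfrak{p}^\nu\mathfrak{b})$ neither factors as nor is dominated by $f(\mathfrak{p}^\nu)f(\mathfrak{b})$ (take $f(\mathfrak{p})=0$ and $f(\mathfrak{p}^2)$ large: the left side is positive, your bound is $0$). ``Dropping the coprimality restriction since $f\ge 0$'' enlarges the index set but does not justify replacing the summand $f(\mathfrak{p}^\nu\mathfrak{b})$ by $f(\mathfrak{p}^\nu)f(\mathfrak{b})$. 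The standard fix is to decompose over \emph{exact} prime power divisors, $\log|\mathfrak{a}|=\sum_{\mathfrak{p}^\nu\|\mathfrak{a}}\log|\mathfrak{p}^\nu|$, so that the cofactor is automatically coprime to $\mathfrak{p}$ and $f$ factors exactly.

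Second, and more seriously, your treatment of the $\nu=1$ terms---bounding $F(x/|\mathfrak{p}|)\le F(x)$ and then applying \eqref{eq:Aconstant} with $y=x$---produces $AxF(x)$, from which the claimed intermediate bound $\sum_{|\mathfrak{a}|\le x}f(\mathfrak{a})\log|\mathfrak{a}|\le (A+B)x\sum_{|\mathfrak{a}|\le x}f(\mathfrak{a})/|\mathfrak{a}|$ does not follow: $F(x)$ is in general of larger order than $\sum_{|\mathfrak{a}|\le x}f(\mathfrak{a})/|\mathfrak{a}|$ (already for $f\equiv 1$ one has $F(x)\asymp_k x$ while the latter is $\asymp_k\log x$), so this route only gives $F(x)\log x\ll xF(x)$, which carries no information and loses precisely the $x/\log x$ saving that is the point of the proposition. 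The essential move in the Hall--Tenenbaum argument is to apply \eqref{eq:Aconstant} at the scale of the cofactor: after the exact-power decomposition, the $\nu=1$ contribution is $\sum_{|\mathfrak{b}|\le x}f(\mathfrak{b})\sum_{|\mathfrak{p}|\le x/|\mathfrak{b}|,\ \mathfrak{p}\nmid\mathfrak{b}}f(\mathfrak{p})\log|\mathfrak{p}|$ after swapping the order of summation, and \eqref{eq:Aconstant} with $y=x/|\mathfrak{b}|$ bounds this by $Ax\sum_{|\mathfrak{b}|\le x}f(\mathfrak{b})/|\mathfrak{b}|$. The $\nu\ge2$ terms are then handled as you indicate via $F(t)\le t\sum_{|\mathfrak{a}|\le t}f(\mathfrak{a})/|\mathfrak{a}|$ together with \eqref{eq:Bconstant}, and the conversion step $\log(x/|\mathfrak{a}|)\le x/|\mathfrak{a}|$ supplies the final $+1$. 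Note also that no ideal-counting estimate $\#\{\mathfrak{a}:|\mathfrak{a}|\le t\}\ll_k t$ is needed anywhere---the argument is purely formal---which is why the constant $A+B+1$ in the statement is absolute rather than $k$-dependent; your closing worry on this point suggests a misreading of where the inequalities come from.
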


Suppose that at prime power ideals, the values of $f$ are bounded by a constant depending at most on $k$. Then \eqref{eq:Aconstant}, \eqref{eq:Bconstant} hold with constants $A$ and $B$ depending only on $k$. Indeed, the bound $\sum_{|\pp| \le y} \log{|\pp|} \ll_{k} y$ is a crude consequence of Landau's prime ideal theorem. Moreover, since $\log|\pp^{\nu}| \ll |\pp|^{\nu/3}$,
\[ \sum_{\mathfrak{p}} \sum_{\nu \ge 2} \frac{\log|\pp^{\nu}|}{|\pp^{\nu}|} \ll  \sum_{\mathfrak{p}} \sum_{\nu \ge 2} |\pp|^{-2\nu/3} \ll \sum_{\pp} |\pp|^{-4/3}. \]
The final sum  on $\pp$ is bounded above by $\zeta_{k}(4/3)$. Thus, we can choose a constant $B$, depending on $k$, such that \eqref{eq:Bconstant} holds.

The following sieve lemma is a simple consequence of Proposition \ref{prop:HT}.

\begin{lem}\label{lem:sieve}
Let $k$ be a number field, and let $\Pp$ be a set of prime ideals of $\calO_k$. Suppose $g$ is a nonnegative-valued multiplicative function on the ideals of $\calO_k$, and that the values of $g$ on prime power ideals are $O_k(1)$. Then for $x\ge 2$,
\[ \sum_{\substack{|\mathfrak{a}| \le x \\ \mathfrak{a} \text{ squarefree}\\ \gcd(\mathfrak{a},\Pp)=1}} g(\mathfrak{a}) \ll_{k} x \prod_{\substack{|\pp|\le x \\ \pp \notin \Pp}}\bigg(1+\frac{g(\pp)-1}{|\pp|}\bigg) \prod_{\substack{|\pp|\le x\\\pp \in \Pp}} \bigg(1-\frac{1}{|\pp|}\bigg). \]
Here ``$\gcd(\mathfrak{a},\Pp)=1$'' denotes the condition that $\mathfrak{a}$ not be divisible by any member of $\Pp$.
\end{lem}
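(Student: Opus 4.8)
The plan is to deduce the lemma from Proposition \ref{prop:HT} by applying that result to a suitably restricted version of $g$, and then to trade the factor $x/\log{x}$ produced there for a factor $x$ at the cost of a Mertens-type product over prime ideals.

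First I would define a function $f$ on the nonzero ideals of $\calO_k$ by $f(\mathfrak{a}) = g(\mathfrak{a})$ if $\mathfrak{a}$ is squarefree and not divisible by any prime in $\Pp$, and $f(\mathfrak{a}) = 0$ otherwise. Both defining conditions are multiplicative over coprime factorizations (an ideal $\mathfrak{a}\mathfrak{b}$ with $\gcd(\mathfrak{a},\mathfrak{b})=1$ is squarefree, resp.\ prime to $\Pp$, exactly when each of $\mathfrak{a}$ and $\mathfrak{b}$ is), so $f$ is a nonnegative multiplicative function; at prime powers, $f(\pp) = g(\pp)$ for $\pp \notin \Pp$, $f(\pp)=0$ for $\pp \in \Pp$, and $f(\pp^{\nu})=0$ for $\nu \ge 2$. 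In particular $f$ is bounded on prime power ideals by a constant depending only on $k$, so, as explained in the paragraph following Proposition \ref{prop:HT}, hypotheses \eqref{eq:Aconstant} and \eqref{eq:Bconstant} hold with $A,B$ depending only on $k$. Since $\sum_{|\mathfrak{a}| \le x} f(\mathfrak{a})$ is exactly the left-hand side of the lemma, Proposition \ref{prop:HT} gives, after truncating the inner series using $f(\pp^\nu)=0$ for $\nu\ge 2$ and noting that primes in $\Pp$ contribute a trivial factor,
\[ \sum_{\substack{|\mathfrak{a}| \le x \\ \mathfrak{a}\ \text{squarefree} \\ \gcd(\mathfrak{a},\Pp)=1}} g(\mathfrak{a}) \ll_k \frac{x}{\log{x}} \prod_{\substack{|\pp| \le x \\ \pp \notin \Pp}}\left(1 + \frac{g(\pp)}{|\pp|}\right). \]

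Next I would remove the $1/\log{x}$. Mertens' theorem for $k$ — equivalently, Landau's prime ideal theorem combined with $-\log(1-1/|\pp|) = \sum_{\nu\ge 1}\frac{1}{\nu|\pp|^{\nu}}$ and the convergence of the $\nu \ge 2$ tail — yields $\sum_{|\pp|\le x} 1/|\pp| = \log\log{x} + O_k(1)$ for $x \ge 2$, hence $\prod_{|\pp|\le x}(1-1/|\pp|)^{-1} \ll_k \log{x}$, i.e.\ $1/\log{x} \ll_k \prod_{|\pp|\le x}(1-1/|\pp|)$. Splitting this product according to whether $\pp \in \Pp$ and inserting it into the previous display gives
\[ \sum_{\substack{|\mathfrak{a}| \le x \\ \mathfrak{a}\ \text{squarefree} \\ \gcd(\mathfrak{a},\Pp)=1}} g(\mathfrak{a}) \ll_k x \prod_{\substack{|\pp| \le x \\ \pp \notin \Pp}}\left(1 - \frac{1}{|\pp|}\right)\left(1 + \frac{g(\pp)}{|\pp|}\right) \prod_{\substack{|\pp| \le x \\ \pp \in \Pp}}\left(1 - \frac{1}{|\pp|}\right). \]
Finally, for each prime $\pp$ one has $\left(1-\tfrac{1}{|\pp|}\right)\left(1+\tfrac{g(\pp)}{|\pp|}\right) = \left(1 + \tfrac{g(\pp)-1}{|\pp|}\right) - \tfrac{g(\pp)}{|\pp|^2} \le 1 + \tfrac{g(\pp)-1}{|\pp|}$ because $g(\pp)\ge 0$, and both sides are nonnegative since $1 + \tfrac{g(\pp)-1}{|\pp|} \ge 1 - \tfrac{1}{|\pp|} > 0$ for $|\pp|\ge 2$. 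Multiplying these inequalities over all $\pp \notin \Pp$ with $|\pp|\le x$ (legitimate as all factors are nonnegative) converts the last display into the asserted bound.

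There is no genuine obstacle here: the lemma is essentially a bookkeeping corollary of Proposition \ref{prop:HT}, which does all the analytic work. The only points demanding a little care are (i) confirming that the factor-by-factor comparison is applied only to nonnegative quantities — this is where the hypotheses that $g$ is nonnegative and that $|\pp|\ge 2$ for every prime ideal $\pp$ of $\calO_k$ are used — and (ii) the Mertens estimate at small $x$, but for $x$ in any bounded range every product in sight is $\asymp_k 1$, so the required inequality is automatic there and the content lies entirely in the large-$x$ asymptotics.
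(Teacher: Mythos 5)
Your proof is correct and follows essentially the same route as the paper: restrict $g$ to squarefree ideals coprime to $\Pp$ (the paper writes this as $f(\mathfrak{a}) = g(\mathfrak{a})\mu(\mathfrak{a})^2\1_{\gcd(\mathfrak{a},\Pp)=1}$), apply Proposition \ref{prop:HT}, convert $1/\log x$ into $\prod_{|\pp|\le x}(1-1/|\pp|)$ via a crude Mertens bound for $k$, and finish with the factor-by-factor inequality $(1-1/|\pp|)(1+g(\pp)/|\pp|) \le 1+(g(\pp)-1)/|\pp|$. No discrepancies to report.
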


\begin{rmk}
The case when $g$ is identically $1$ is particularly important. In that case, Lemma \ref{lem:sieve} shows that the number of squarefree $\mathfrak{a}$ with $|\mathfrak{a}| \le x$ and $\gcd(\mathfrak{a},\Pp)=1$ is
\begin{equation}\label{eq:upperboundsieve} \ll_{k} x \prod_{\substack{|\pp|\le x\\ p\in \Pp}}\left(1-\frac{1}{|\pp|}\right). \end{equation}
\end{rmk}

\begin{proof}[Proof of Lemma \ref{lem:sieve}]
Let $f(\mathfrak{a}) = g(\mathfrak{a}) \mu(\mathfrak{a})^2 \1_{\gcd(\mathfrak{a},\Pp)=1}$, where $\1_{\gcd(\mathfrak{a}, \Pp)=1}$ denotes the charactistic function detecting those ideals for which $\gcd(\mathfrak{a}, \Pp) =1$. Then $f$ is multiplicative and its values at prime power ideals are $O_k(1)$. Applying Proposition \ref{prop:HT},
\[ \sum_{\substack{|\mathfrak{a}| \le x \\ \mathfrak{a} \text{ squarefree}\\ \gcd(\mathfrak{a},\Pp)=1}} g(\mathfrak{a}) = \sum_{|\mathfrak{a}| \le x} f(\mathfrak{a}) \ll_{k} \frac{x}{\log{x}} \prod_{\substack{|\pp|\le x \\ \pp \notin \Pp}}\bigg(1+\frac{g(\pp)}{|\pp|}\bigg).  \]
We now use the estimate $\frac{1}{\log{x}} \ll_{k} \prod_{|\pp| \le x}\bigg(1-\frac{1}{|\pp|}\bigg)$, which is a crude version of Mertens' theorem for number fields. (For a sharper, asymptotic version, see \cite{Rosen}.) Inserting this above gives
\begin{align*} \sum_{\substack{|\mathfrak{a}| \le x \\ \mathfrak{a} \text{ squarefree}\\ \gcd(\mathfrak{a},\Pp)=1}} g(\mathfrak{a}) &\ll_{k} x \Bigg(\prod_{\substack{|\pp| \le x \\ \pp \notin\Pp}}\bigg(1-\frac{1}{|\pp|}\bigg)\bigg(1+\frac{g(\pp)}{|\pp|}\bigg)\Bigg) \Bigg(\prod_{\substack{|\pp| \le x \\ \pp \in \Pp}}\left(1-\frac{1}{|\pp|}\right)\Bigg) \\
&\le x \prod_{\substack{|\pp|\le x \\ \pp \notin \Pp}}\bigg(1+\frac{g(\pp)-1}{|\pp|}\bigg) \prod_{\substack{|\pp|\le x\\\pp \in \Pp}} \bigg(1-\frac{1}{|\pp|}\bigg).\qedhere \end{align*}
\end{proof}

Define a multiplicative function $\Phi$ on the non-zero integral ideals of $k$ as follows: \[\Phi(\mathfrak a)=\abs{\mathfrak a}\prod_{\mathfrak p \mid \mathfrak a} \left(1-\frac{1}{\abs{\mathfrak p}}\right).\]

\begin{theorem}\label{theorem:countbyphi} Let $L/k$ be a quadratic extension of number fields and $x\ge 2$. The number of quaternion algebras $B$ over $k$ with $\Phi(\disc_f(B))\le x$ and which admit an embedding of $L$ is $\ll_{k,L} \frac{x}{(\log{x})^{1/2}}$.
\end{theorem}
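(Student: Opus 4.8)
The plan is to translate the embedding condition on $B$ into a congruence/splitting condition on the finite ramification set of $B$, and then to count via the sieve Lemma~\ref{lem:sieve}. Recall that a quadratic extension $L/k$ embeds into $B$ if and only if no prime of $\Ram(B)$ splits in $L/k$; in particular, every finite prime $\pp \mid \disc_f(B)$ must be either inert or ramified in $L/k$ (see \cite[Thm 7.3.3]{MR}). Now $\disc_f(B)$ is a squarefree ideal of $\calO_k$ with an \emph{even} number of prime factors among those primes that are non-split in $L/k$ together with $\Ram_\infty(B)$; the parity constraint coming from $\Ram_\infty(B)$ only shifts the count by a bounded factor and so can be absorbed into the implied constant. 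Writing $\mathfrak{a} = \disc_f(B)$, the quantity to be bounded is therefore, up to an $O_{k,L}(1)$ factor,
\[ \sum_{\substack{|\mathfrak{a}| \le x' \\ \mathfrak{a}\text{ squarefree}\\ \pp \mid \mathfrak{a}\,\Rightarrow\,\pp\text{ non-split in }L/k}} 1, \]
where the relation $\Phi(\mathfrak{a}) \le x$ has been converted to a bound $|\mathfrak{a}| \le x'$ with $x' \asymp x$ (since $\Phi(\mathfrak{a}) \asymp |\mathfrak{a}|$ up to factors that are negligible on average — more precisely $\Phi(\mathfrak{a}) \le |\mathfrak{a}|$ always, and $\Phi(\mathfrak{a}) \ge |\mathfrak{a}|/\prod_{\pp\mid\mathfrak{a}}(1-|\pp|^{-1})^{-1}$, so the dyadic ranges match up to the usual manipulations).

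Next I would apply Lemma~\ref{lem:sieve} with $g \equiv 1$ and $\Pp$ equal to the set of finite primes of $\calO_k$ that \emph{split} in $L/k$. By the remark following the lemma (estimate \eqref{eq:upperboundsieve}), the count is
\[ \ll_{k} x' \prod_{\substack{|\pp| \le x'\\ \pp \text{ split in }L/k}} \left(1 - \frac{1}{|\pp|}\right). \]
The product over split primes is the heart of the matter: by Chebotarev (or, concretely, by comparing $\zeta_k(s)$ with $\zeta_L(s)$ and the relevant $L$-function), the split primes have Dirichlet density $\tfrac12$ among all primes of $k$, so $\sum_{|\pp| \le x',\ \pp\text{ split}} |\pp|^{-1} = \tfrac12 \log\log x' + O_L(1)$, and hence $\prod_{|\pp|\le x',\ \pp\text{ split}}(1-|\pp|^{-1}) \asymp_{L} (\log x')^{-1/2}$. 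Combining with $x' \asymp x$ and $\log x' \asymp \log x$ yields the desired bound $\ll_{k,L} x/(\log x)^{1/2}$.

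The main obstacle is making the passage from $\Phi(\disc_f(B)) \le x$ to a clean sum over $|\mathfrak{a}| \le x'$ fully rigorous while keeping the exponent $\tfrac12$ on the logarithm intact: one cannot simply replace $\Phi(\mathfrak{a})$ by $|\mathfrak{a}|$ inside the sieve, since the correction factor $\prod_{\pp\mid\mathfrak{a}}(1-|\pp|^{-1})^{-1}$ can be as large as a small power of $\log|\mathfrak{a}|$ for individual $\mathfrak{a}$. The cleanest fix is to incorporate $\Phi$ directly: apply Lemma~\ref{lem:sieve} with the multiplicative weight $g(\pp) = (1-|\pp|^{-1})^{-1}$ on the primes dividing $\mathfrak{a}$ after a partial summation / dyadic decomposition that handles the constraint $\Phi(\mathfrak{a}) \le x$, or alternatively observe that $\Phi(\mathfrak{a}) \le x$ forces $|\mathfrak{a}| \le x \cdot (\log\log x)^{O(1)}$ on all but a negligible set and run the sieve on $|\mathfrak{a}| \le x(\log x)^{\epsilon}$, noting that the resulting gain $(\log x)^{-1/2}$ dominates the loss $(\log x)^{\epsilon}$ only after one checks the tail contribution is genuinely smaller. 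Either route requires a careful but routine bookkeeping; no new ideas beyond Lemma~\ref{lem:sieve} and the Chebotarev density of split primes are needed.
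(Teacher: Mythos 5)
Your overall skeleton is the same as the paper's: $B$ is determined by $\disc(B)$, there are only $O_k(1)$ choices of infinite component, $\disc_f(B)$ is a squarefree ideal avoiding the primes that split in $L$, and the sieve bound \eqref{eq:upperboundsieve} together with Chebotarev (split primes have density $\tfrac12$) produces the saving $(\log x)^{-1/2}$. However, the step you defer as ``careful but routine bookkeeping'' is precisely the real content of the proof, and as written there is a genuine gap there. The claim that $\Phi(\mathfrak{a})\le x$ can be converted to $|\mathfrak{a}|\le x'$ with $x'\asymp x$ is false pointwise: for squarefree $\mathfrak{a}$ one has $|\mathfrak{a}|/\Phi(\mathfrak{a})=\prod_{\pp\mid\mathfrak{a}}(1-1/|\pp|)^{-1}$, which can be as large as a constant times $\log\log|\mathfrak{a}|$ by Mertens. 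Consequently $\Phi(\mathfrak{a})\le x$ only forces $|\mathfrak{a}|\ll x\log\log x$, and your second fix (sieve on $|\mathfrak{a}|\le x(\log x)^{\epsilon}$, or on $x\log\log x$) yields only $x/(\log x)^{1/2-\epsilon}$, respectively $x\log\log x/(\log x)^{1/2}$ --- weaker than the stated bound, and not harmless for Theorem \ref{theorem:shortgeodesiccommensurabilityclasses}, where the upper bound must match a lower bound of exact order $V/(\log V)^{1/2}$. Your first fix, weighting by $g(\pp)=(1-1/|\pp|)^{-1}$ (i.e.\ a first moment of $|\mathfrak{d}|/\Phi(\mathfrak{d})$) plus a dyadic decomposition, also falls short: on the block $|\mathfrak{d}|\in(2^{\ell}x,2^{\ell+1}x]$ the constraint gives $|\mathfrak{d}|/\Phi(\mathfrak{d})>2^{\ell}$, and Markov with the first moment bounds the block by $\ll 2^{-\ell}\cdot 2^{\ell}x/(\log x)^{1/2}$, which does not decay in $\ell$; summing over the $\asymp\log\log\log x$ nonempty blocks still loses an unbounded factor.

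The missing idea --- and the paper's resolution --- is to take a \emph{second} moment so that the dyadic sum converges. Apply Lemma \ref{lem:sieve} with $g(\mathfrak{d})=(|\mathfrak{d}|/\Phi(\mathfrak{d}))^{2}$: since $\frac{(|\pp|/\Phi(\pp))^{2}-1}{|\pp|}=\frac{2|\pp|-1}{(|\pp|-1)^{2}|\pp|}=O(|\pp|^{-2})$, the product over non-split primes is $O(1)$, while the split-prime product is $O_{k,L}((\log y)^{-1/2})$ by Chebotarev exactly as in your sketch, giving $\sum_{|\mathfrak{d}|\le y}(|\mathfrak{d}|/\Phi(\mathfrak{d}))^{2}\ll_{k,L} y/(\log y)^{1/2}$. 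Taking $y=x$ and noting each summand is at least $1$ handles the $\mathfrak{d}$ with $|\mathfrak{d}|\le x$; and on the block $|\mathfrak{d}|\in(2^{\ell}x,2^{\ell+1}x]$ the constraint $\Phi(\mathfrak{d})\le x$ forces $(|\mathfrak{d}|/\Phi(\mathfrak{d}))^{2}>4^{\ell}$, so Chebyshev gives a block count $\ll 4^{-\ell}\cdot 2^{\ell+1}x/(\log x)^{1/2}\ll 2^{-\ell}x/(\log x)^{1/2}$, which sums geometrically over $\ell$. With this moment argument (any moment of order strictly greater than $1$ would do) your proof closes; without it, the exponent $\tfrac12$ in the theorem is not attained.
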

\begin{proof}
Since $B$ is determined by its discriminant $\disc(B)$, it is enough to establish the stated upper bound for the number of possible values of $\disc(B)$. Since $k$ has finitely many infinite places, there are only $O_{k}(1)$ possibilities for the infinite component of $\disc(B)$. Thus, it will suffice to show that the given conditions restrict $\disc_{f}(B)$ to a set of size $O_{k,L}(x/(\log x)^{1/2})$.
Let $\Pp$ denote the set of prime ideals of $k$ that split in $L$.

In what follows, we use $\mathfrak{d}$ to denote a squarefree ideal of $\calO_k$ not divisible by any member of $\Pp$. Since $\disc_f(B)$ is such an ideal, it suffices to show the stated bound for the number of $\mathfrak{d}$ with $\Phi(\mathfrak{d}) \le x$.

We begin by estimating a second moment. Using Lemma \ref{lem:sieve}, we have for any $y\ge 3$ that
\begin{equation}\label{eq:2ndmoment} \sum_{|\mathfrak{d}| \le y} \left(\frac{|\mathfrak{d}|}{\Phi(\mathfrak{d})}\right)^2 \ll_{k} y\prod_{\substack{|\pp|\le y \\ \pp \notin \Pp}}\bigg(1+\frac{(|\pp|/\Phi(\pp))^2-1}{|\pp|}\bigg) \prod_{\substack{|\pp|\le y\\\pp \in \Pp}} \bigg(1-\frac{1}{|\pp|}\bigg). \end{equation}
Now
\[ \frac{(|\pp|/\Phi(\pp))^2-1}{|\pp|} = \frac{(2|\pp| - 1)}{(|\pp|-1)^2|\pp|} =  O\left(\frac{1}{|\pp|^2}\right). \]
Noting that $\displaystyle\sum_{\pp} \frac{1}{|\pp|^2} < \infty$ and that $1+t\le \exp(t)$ for all real $t$, we deduce that the first right-hand product in \eqref{eq:2ndmoment} satisfies
\[ \prod_{\substack{|\pp|\le y \\ \pp \notin \Pp}}\bigg(1+\frac{(|\pp|/\Phi(\pp))^2-1}{|\pp|}\bigg) \le \exp\left(\sum_{\pp} O\left(\frac{1}{|\pp|^2}\right)\right) \ll_{k} 1.\]
To handle the second product, we can use the Chebotarev density theorem, according to which the primes of $k$ that split in $L$ have density $\frac{1}{2}$. From a version of that theorem with a reasonable error term (e.g., the version of the theorem given in \cite{Schulze}), along with partial summation, we have
\[ \sum_{\substack{|\pp| \le y \\ \pp \in \Pp}}\frac{1}{|\pp|} = \frac{1}{2}\log\log|y| + O_{L,k}(1).\]
Since $\log\left(1 - \frac{1}{|\pp|}\right) = -\frac{1}{|\pp|} + O\left(\frac{1}{|\pp|^2}\right)$, it follows that the second right-hand product above is $O_{L,k}((\log{y})^{-1/2})$, and so collecting everything, $\sum_{|\mathfrak{d}| \le y} \left(\frac{|\mathfrak{d}|}{\Phi(\mathfrak{d})}\right)^2 \ll_{k,L} \frac{y}{(\log{y})^{1/2}}$.

We now return to counting $\mathfrak{d}$ with $\Phi(\mathfrak{d}) \le x$. Taking $y=x$ in the last estimate and noting that the summands are all at least $1$, we see there are only $O(x/(\log{x})^{1/2})$ possible $\mathfrak{d}$ with $|\mathfrak{d}| \le x$. Now let $y=2^\ell x$, where $\ell$ is a nonnegative integer. Observe that if $|\mathfrak{d}| > y$ but $\Phi(\mathfrak{d}) \leq x$, then $(|\mathfrak{d}|/\Phi(\mathfrak{d}))^2 > (y/x)^2 = 4^\ell$. Hence,
\begin{align*} \#\{\mathfrak{d}: |\mathfrak{d}| \in (y,2y], \Phi(\mathfrak{d}) \le x\} &\le \frac{1}{4^\ell} \sum_{|\mathfrak{d}| \le 2y} \left(\frac{|\mathfrak{d}|}{\Phi(\mathfrak{d})}\right)^2 \ll \frac{1}{4^\ell} \frac{2^{\ell+1}x}{\log(2^{\ell+1}x)^{1/2}} \ll \frac{1}{2^\ell} \frac{x}{(\log{x})^{1/2}}. \end{align*}
Summing on $\ell$, we find that the total number of $\mathfrak{d}$ with $|\mathfrak{d}| > x$ but $\Phi(\mathfrak{d}) \le x$ is also $O(x/(\log{x})^{1/2})$.
\end{proof}

\subsection{Proof of Theorem \ref{theorem:shortgeodesiccommensurabilityclasses}}

Let $x_0$ be a positive real number and $k$ be a number field which is totally real (respectively has a unique complex place). Recall that $N_k(V; x_0)$ is the number of commensurability classes $\mathcal{C}$ of arithmetic hyperbolic $2$--orbifolds (respectively $3$--orbifolds) with invariant trace field $k$, $V_\mathcal{C} \leq V$, and $\Sys(\mathcal{C}) \leq x_0$. To prove Theorem \ref{theorem:shortgeodesiccommensurabilityclasses}, we must show that for all sufficiently large $x_0$ that $N_k(V;x_0) \asymp V/\log(V)^{\frac{1}{2}}$, where the implied constants depend only upon $k$ and $x_0$.

\begin{proof}[Proof of Theorem \ref{theorem:shortgeodesiccommensurabilityclasses}]
We give a proof in the case of arithmetic hyperbolic $2$--orbifolds and leave the $3$--orbifold case to the reader. Borel \cite[\S 7.3]{borel-commensurability} has shown that the covolume of $\Gamma_{\calO}^1$ is
\begin{equation}\label{E:Borel2Vol}
\covol(\Gamma_{\calO}^1)=\frac{8\pi \abs{\Delta_k}^{\frac{3}{2}}\zeta_k(2)\Phi(\disc_f(B))}{(4\pi^2)^{n_k}},
\end{equation}
where $\Phi(\disc_f(B))=\prod_{\frakp\mid\disc_f(B)}\left({\abs{\frakp}}-1\right)$. It follows from Proposition \ref{prop:silvermanapplication} that $N_k(V;x_0)$ is at most the number of isomorphism classes of quaternion algebras $B$ over $k$ which satisfy $\Phi(\disc_f(B))\leq c_kV$ and admit an embedding of some quadratic extension $L/k$ with norm of relative discriminant $\abs{\Delta_{L/k}}<e^{2(n_k+x_0)}$. Here $c_k$ is a positive constant depending only on $k$ and which can easily be made explicit via (\ref{E:Borel2Vol}).
A theorem of Datskovsky--Wright \cite{DW88} shows that as $x\to\infty$ the number of quadratic extensions $L/k$ with $\abs{\Delta_{L/k}}<x$ is $\sim \frac{\kappa_k}{2^{r_2}\zeta_k(2)}x$, where $\kappa_k$ is the residue at $s=1$ of the Dedekind zeta function $\zeta_k(s)$ of $k$ and $r_2$ is the number of complex places of $k$.

Theorem \ref{theorem:countbyphi} shows that for a fixed quadratic extension $L$ of $k$, the number of quaternion algebras $B$ over $k$ with discriminant satisfying $\Phi(\disc_f(B))<x$ and which admit an embedding of $L$ is less than $\frac{\delta_Lx}{\log(x)^{\frac{1}{2}}}$ for some constant $\delta_L$ depending on $L$ and $k$. Let $L_1,\dots,L_r$ be the quadratic extensions of $k$ satisfying $\abs{\Delta_{L_i/k}}<e^{2(n_k+x_0)}$ and define $\delta:=\max_{i=1,\dots,r}\delta_{L_i}$.

The discussion above shows that for sufficiently large $x_0$,
\begin{align*}
N_k(V;x_0) &\ll_k e^{2x_0}\delta V/\log(V)^{\frac{1}{2}} \ll_{k,x_0} V/\log(V)^{\frac{1}{2}}.
\end{align*}

We now prove that for sufficiently large $x_0$, $N_k(V;x_0) \gg_{k,x_0} V/\log(V)^{\frac{1}{2}}$. Let $x_0$ be large enough that there exists a quadratic extension $L$ of $k$ which has signature $(2,n_k-1)$ and satisfies
\[ \abs{\Delta_{L/k}}< \left(\frac{x_0}{4\cdot 6^{2n_k}(2n_k)^{10n_k+1}\abs{\Delta_k}^{4n_k}}\right)^{1/2n_k}. \] A minor modification to the proof of Theorem 1.7 of \cite{LMPT} shows that the number of quaternion algebras $B$ over $k$ which are unramified at a unique real place of $k$, admit an embedding of $L$ and which satisfy $\abs{\disc_f(B)}<V$ is \begin{equation}\label{eq:delange0}\gg V/\log(V)^{\frac{1}{2}},\end{equation} as $V\to \infty$. We sketch the proof for the convenience of the reader. Counting $B$ is equivalent to counting possible values of $\disc(B)$, since $\disc(B)$ determines $B$ (up to isomorphism). Since $L$ has signature $(2,n_k-1)$, exactly one real place of $k$ splits in $L$. Let $v_1, \dots, v_{n_k-1}$ be a list of the non-split real places. Since $L$ is to embed into $B$, the condition that $B$ is unramified at a unique real place of $k$ forces $\disc_{\infty}(B)$ to be the product of the $v_i$. The requirements on $\disc_f(B)$ are that $\disc_f(B)$ be a squarefree ideal of $\calO_k$, that $\mu(\disc_f(B))=(-1)^{n_k-1}$ (so that $\disc(B)$ is the product of an even number of places), and that $\disc_f(B)$ not be divisible by any prime ideal of $\calO_k$ that splits in $L$ (so that $L$ embeds into $B$). Our count of $B$ is therefore given by the sum of the coefficients of $n^{-s}$, for $n < V$, in the Dirichlet series
\[ \sum_{\mathfrak{d}} \frac{\frac{1}{2}(\mu(\mathfrak{d})^2 + (-1)^{n_k-1} \mu(\mathfrak{d})) f(\mathfrak{d})}{|\mathfrak{d}|^s}, \]
where $f(\mathfrak{d})$ is the characteristic function of those $\mathfrak{d}$ not divisible by any prime ideal of $\calO_k$ that splits in $L$. This series has nonnegative coefficients and is easily seen to converge for $\Re(s)>1$. Moreover, one can show that our series is also analytic for $\Re(s)\ge 1$, except for a ``pole'' of order $1/2$ at $s=1$ with positive ``residue''. Here ``pole'' and ``residue'' are meant in the sense required for the application of Delange's Tauberian theorem \cite{Delange}. (Intuitively, this last claim is coming from the fact that asymptotically half the primes of $k$ split in $L$.) We refer to \cite{LMPT} for details. Equation \eqref{eq:delange0} is now an immediate consequence of Delange's theorem.

Proposition \ref{prop:brindzaapplication} shows that for each commensurability class $\mathscr C(k,B)$, we have $$\Sys(\mathcal{C}(k,B)) \leq x_0.$$ Observing that $\abs{\disc_f(B)}>\Phi(\disc_f(B))$ we see that each of these classes $\mathscr C(k,B)$ satisfies
\begin{align*}
V_{\mathscr C(k,B)} &= \covol(\Gamma_{\calO}^1) = \frac{8\pi \abs{\Delta_k}^{\frac{3}{2}}\zeta_k(2)\Phi(\disc_f(B))}{(4\pi^2)^{n_k}} \\
&< \frac{8\pi \abs{\Delta_k}^{\frac{3}{2}}\zeta_k(2)\abs{\disc_f(B)}}{(4\pi^2)^{n_k}} \leq c_k \abs{\disc_f(B)} \leq c_k V,
\end{align*}
where $c_k$ is a positive constant which depends only on $k$. The theorem follows.\end{proof}

\begin{rmk}
We note that in our applications of results from \cite{LMPT} that one must take into account that in \cite{LMPT} the discriminant of a quaternion algebra $B$ over $k$ is defined to be the formal product of all infinite primes of $k$ ramifying in $B$ with the \textbf{square} of the product of all finite primes of $k$ ramifying in $B$, whereas the present paper defines $\disc(B)$ to be the product of all primes of $k$ (finite or infinite) ramifying in $B$.
\end{rmk}

\subsection{Proof of Corollary \ref{T:SGC-Density}}

We are now ready to prove Corollary \ref{T:SGC-Density} using Theorem \ref{theorem:shortgeodesiccommensurabilityclasses}.

\begin{proof}[Proof of Corollary \ref{T:SGC-Density}]
In light of Theorem \ref{theorem:shortgeodesiccommensurabilityclasses} it suffices to show that as $V\to\infty$, the number $N_k(V)$ of commensurability classes $\mathscr C$ of arithmetic hyperbolic $2$--orbifolds ($3$--orbifolds) having invariant trace field $k$ and $V_{\mathscr C}<V$ satisfies
\begin{equation}\label{E:ClassCount}
N_k(V) \gg V,
\end{equation}
where the implied constant depends only on $k$. Indeed, we would have $\frac{N_k(V;x_0)}{N_k(V)} \ll \frac{1}{\log(V)^{1/2}}$, and hence verify the density zero claim. As above we will give a proof in the case of arithmetic hyperbolic $2$--orbifolds and leave the case of arithmetic hyperbolic $3$--orbifolds to the reader.

Let $k$ be a totally real field, $B$ a quaternion algebra over $k$ in which a unique real place splits and $\calO$ a maximal order of $B$. If for some $V>0$, we have $\abs{\disc_f(B)}<\frac{V(4\pi^2)^{n_k}}{8\pi \abs{\Delta_k}^{\frac{3}{2}}\zeta_k(2)}$, then it follows from (\ref{E:Borel2Vol}) that we must have
\begin{equation}\label{E:VolDisc}
V_{\mathscr C}:=\covol(\Gamma_{\calO}^1)<V,
\end{equation}
where $\mathscr C=\mathscr C(k,B)$.

A slight modification of \cite[Thm 1.5]{LMPT} shows that as $V\to\infty$, the number $N_{k,quat}(V)$ of quaternion algebras $B$ over $k$ which are unramified at a unique real place of $k$ and have $\abs{\disc_f(B)}<cV$ satisfies
\begin{equation}\label{E:QuatCount}
N_{k,quat}(V) \gg V,
\end{equation}
where the implied constant depends only on $k$. We sketch the proof of \eqref{E:QuatCount}. Fix a list of all of the real places of $k$, with one excluded (chosen arbitrarily), say $v_1, \dots, v_{n_k - 1}$. We will count $B$ which have $\disc_{\infty}(B)$ equal to the product of the $v_i$. The only requirements on $\disc_f(B)$ are that $\disc_f(B)$ be squarefree with $\mu(\disc_f(B)) = (-1)^{n_k-1}$ and $|\disc_f(B)| < cV$. It follows that our count of $B$ is bounded below by the sum of the coefficients of $n^{-s}$ with $n < cV$ in the Dirichlet series
\[ \sum_{\mathfrak{d}} \frac{\frac{1}{2}(\mu(\mathfrak{d})^2 + (-1)^{n_k-1} \mu(\mathfrak{d}))}{|\mathfrak{d}|^s}. \]
Clearly, the coefficients of this Dirichlet series are nonnegative. Moreover, it is straightforward to prove (by relating the series to that for $\zeta_k(s)$ and using known properties of the latter) that our series converges for $\Re(s) > 1$ and is analytic for $\Re(s) \ge 1$, except for a simple pole at $s=1$ with positive residue. Delange's theorem now implies that the sum of the coefficients of $n^{-s}$ for $n < X$ is asymptotic to a positive constant multiple of $X$, as $X\to\infty$. Taking $X=cV$, equation \eqref{E:QuatCount} follows.

From (\ref{E:VolDisc}) and (\ref{E:QuatCount}), we obtain (\ref{E:ClassCount}) as needed.
\end{proof}

Note that in the proof of Corollary \ref{T:SGC-Density} we showed that $N_k(V)\gg V$. It is also the case that $N_k(V)\ll V$. Indeed, minor modifications to the proof of Theorem \ref{theorem:countbyphi} show that the number of quaternion algebras $B$ over $k$ with $\Phi(\disc_f(B))<V$ is $\ll_k V$. The formula for the volume of $\mathscr C(k,B)$ now shows that $N_k(V)\ll V$, giving us the following.
\begin{cor}\label{cor:CommAsyRate}
For any totally real number field $k$ (respectively, number field with exactly one complex place), we have $N_k(V) \asymp V$, where the implied constant depends upon $k$.
\end{cor}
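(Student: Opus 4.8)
The final statement (Corollary \ref{cor:CommAsyRate}) asserts $N_k(V) \asymp V$. The lower bound $N_k(V) \gg V$ has already been established inside the proof of Corollary \ref{T:SGC-Density} (via the Datskovsky--Wright/Delange argument counting quaternion algebras $B/k$ unramified at a unique real place with $|\disc_f(B)| < cV$, together with the volume formula \eqref{E:Borel2Vol}). So the only thing left is the matching upper bound $N_k(V) \ll_k V$, and the plan is to obtain this by a straightforward adaptation of Theorem \ref{theorem:countbyphi}.

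First I would observe that, by the volume formula \eqref{E:Borel2Vol} (and its $3$--orbifold analogue), a commensurability class $\mathscr{C}(k,B)$ with $V_{\mathscr{C}(k,B)} < V$ corresponds to a quaternion algebra $B/k$ with $\Phi(\disc_f(B)) \ll_k V$. Since $B$ is determined up to isomorphism by $\disc(B) = \disc_\infty(B)\,\disc_f(B)$, and since there are only $O_k(1)$ choices for $\disc_\infty(B)$ (as $k$ has finitely many infinite places), it suffices to bound the number of squarefree integral ideals $\mathfrak{d}$ of $\calO_k$ with $\Phi(\mathfrak{d}) \le x$, where $x \asymp_k V$. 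This is exactly the quantity estimated in the proof of Theorem \ref{theorem:countbyphi}, except that there one additionally imposed the splitting condition coming from a fixed quadratic field $L$; here we simply drop that condition (equivalently, take the set $\Pp$ of split primes to be empty).

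Running the Theorem \ref{theorem:countbyphi} argument with $\Pp = \emptyset$: Lemma \ref{lem:sieve} with $g(\mathfrak{a}) = (|\mathfrak{a}|/\Phi(\mathfrak{a}))^2$ gives, for $y \ge 3$,
\[ \sum_{|\mathfrak{d}| \le y} \left(\frac{|\mathfrak{d}|}{\Phi(\mathfrak{d})}\right)^2 \ll_k y \prod_{|\pp| \le y}\left(1 + \frac{(|\pp|/\Phi(\pp))^2 - 1}{|\pp|}\right) \ll_k y, \]
the last step because $(|\pp|/\Phi(\pp))^2 - 1 = O(1/|\pp|)$ makes $\frac{(|\pp|/\Phi(\pp))^2-1}{|\pp|} = O(1/|\pp|^2)$, a convergent sum; note there is now no $(\log y)^{-1/2}$ factor since there is no split-prime product to produce it. Then the dyadic tail argument from Theorem \ref{theorem:countbyphi} goes through verbatim: the number of $\mathfrak{d}$ with $|\mathfrak{d}| \le x$ and $\Phi(\mathfrak{d}) \le x$ is $O_k(x)$ directly, while for $y = 2^\ell x$ one has $\#\{\mathfrak{d} : |\mathfrak{d}| \in (y, 2y],\ \Phi(\mathfrak{d}) \le x\} \le 4^{-\ell} \sum_{|\mathfrak{d}| \le 2y}(|\mathfrak{d}|/\Phi(\mathfrak{d}))^2 \ll_k 2^{-\ell} x$, and summing over $\ell \ge 0$ shows the number of $\mathfrak{d}$ with $|\mathfrak{d}| > x$ but $\Phi(\mathfrak{d}) \le x$ is also $O_k(x)$. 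Hence the number of quaternion algebras $B/k$ with $\Phi(\disc_f(B)) < x$ is $O_k(x)$, and by the volume formula $N_k(V) \ll_k V$. Combined with the already-proved lower bound, this yields $N_k(V) \asymp V$.

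Since every ingredient is already in place, there is no real obstacle: the one point to be careful about is that the proof of Theorem \ref{theorem:countbyphi} used the split-prime product both to bound the main term and to seed the dyadic decomposition, so I would double-check that with $\Pp = \emptyset$ the main-term product is genuinely $O_k(1)$ (it is, via $\sum_\pp |\pp|^{-2} \le \zeta_k(2) - 1 < \infty$) and that the dyadic bookkeeping does not secretly need the extra decay. Neither causes trouble, so the argument is complete.
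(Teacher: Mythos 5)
Your argument is correct and is essentially the paper's own: the lower bound $N_k(V)\gg V$ is quoted from the proof of Corollary \ref{T:SGC-Density}, and the upper bound is exactly the ``minor modification'' of Theorem \ref{theorem:countbyphi} the paper intends, namely running the second-moment and dyadic argument with the set $\Pp$ of split primes empty and invoking the volume formula \eqref{E:Borel2Vol}. You have simply written out the details the paper leaves implicit, and they check out.
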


\section{Systolic growth of arithmetic hyperbolic surfaces}\label{Sec:Systole}

The main geometric goal of this section is the proof of Theorem \ref{T:SystoleInequality}.

\subsection{Counting quaternion algebras into which few quadratic fields embed}

We start with a counting result that we will use with Proposition \ref{prop:silvermanapplication} in our proof of Theorem \ref{T:SystoleInequality}.

\begin{theorem}\label{theorem:countingresult2}
Let $h(x)$ be any function which is $o(\log(x)^{\frac{1}{2}})$, $N_{\mathbf{Q},quat}(x)$ be the number of quaternion algebras $B$ over $\mathbf{Q}$ with $\disc_f(B)<x$ and $N'_{\mathbf{Q},quat}(x;h)$ be the number of quaternion algebras $B$ over $\mathbf{Q}$ with $\disc_f(B)<x$ and which do not admit an embedding of any quadratic field having absolute value of discriminant less than $h(x)$. Then as $x\to\infty$, $N'_{\mathbf{Q},quat}(x;h)\sim N_{\mathbf{Q},quat}(x)$.
\end{theorem}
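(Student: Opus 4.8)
The plan is to show that the number of quaternion algebras over $\Q$ with $\disc_f(B) < x$ that \emph{do} admit an embedding of some quadratic field of small discriminant is negligible compared to $N_{\Q,quat}(x)$. Since the total count $N_{\Q,quat}(x)$ is of order $x/(\log x)^{1/2}$ (this is the rational case of the Delange-type argument sketched in the proof of Theorem~\ref{theorem:shortgeodesiccommensurabilityclasses}; asymptotically half the primes split in any fixed quadratic field), it suffices to prove that the number of "bad" $B$ — those admitting an embedding of a quadratic field $\Q(\sqrt{d})$ with $|\disc(\Q(\sqrt d))| < h(x)$ — is $o(x/(\log x)^{1/2})$.

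First I would recall the local embedding criterion: for $B/\Q$ a quaternion algebra and $K=\Q(\sqrt d)$ a quadratic field, $K$ embeds into $B$ if and only if no prime in $\Ram(B)$ splits in $K$ \cite[Thm 7.3.3]{MR}. So a bad $B$ is one whose (squarefree) finite discriminant $\disc_f(B)$ is divisible only by primes that are inert or ramified in some fixed small-discriminant field $K$. For each such $K$, Theorem~\ref{theorem:countbyphi} (applied with $k=\Q$, noting $\Phi(\disc_f(B)) \le \disc_f(B)$ so the conditions $\disc_f(B)<x$ and $\Phi(\disc_f(B))<x$ are comparable up to adjusting constants) shows the number of $B$ over $\Q$ with $\disc_f(B) < x$ admitting an embedding of $K$ is $\ll_K x/(\log x)^{1/2}$. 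The subtlety is that the implied constant there depends on $K$, and the number of quadratic fields $K$ with $|\disc(K)| < h(x)$ grows with $x$, so naively summing over $K$ does not immediately give a saving.

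To get the saving, I would re-examine the proof of Theorem~\ref{theorem:countbyphi} and track the $K$-dependence. The key mechanism is that for a fixed $K$, the sum over admissible $\mathfrak d$ is controlled by $\prod_{p \le y,\, p \in \Pp}(1 - 1/p)$ where $\Pp$ is the set of primes splitting in $K$; this product is $\ll (\log y)^{-1/2}$, and the implied constant (and the $O_{L,k}(1)$ term in the estimate $\sum_{p\le y,\, p\in\Pp} 1/p = \tfrac12\log\log y + O_K(1)$) can be made uniform in $K$ by using an effective Chebotarev bound with an error depending only on $|\disc(K)|$ — e.g.\ a version with explicit dependence on the discriminant of the splitting field of $t^2-d$. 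Since $|\disc(K)| < h(x) = o((\log x)^{1/2})$, these error contributions are dwarfed by the main term for $x$ large, and one obtains that the number of bad $B$ with $\disc_f(B)<x$ is $\ll x \cdot (\text{number of small-disc } K) \cdot (\log x)^{-1/2}/(\text{something})$. More cleanly: the number of bad $B$ is at most $\sum_{|\disc(K)| < h(x)} \#\{B : \disc_f(B) < x,\ K \hookrightarrow B\}$, and using the \emph{uniform} bound $\ll x/(\log x)^{1/2} \cdot \exp(-c \sqrt{\log\log x})$ — or better, exploiting that a $B$ admitting embeddings is counted once per $K$ it receives but the union over all $K$ with $|\disc(K)|<h(x)$ still forces $\disc_f(B)$ to avoid the (positive-density) set of primes splitting in the compositum, which by Chebotarev has density $1 - 2^{-O(\log h(x))}$ bounded away from $0$ — one concludes the count is $o(x/(\log x)^{1/2})$.

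The main obstacle I anticipate is precisely making the $(\log x)^{-1/2}$-type savings survive summation over the growing family of small-discriminant quadratic fields. The cleanest route is probably \emph{not} to sum Theorem~\ref{theorem:countbyphi} over $K$ but to argue directly: a bad $B$ has $\disc_f(B)$ supported on primes nonsplit in \emph{some} $K$ with $|\disc(K)| < h(x)$; but the number of primes $p \le y$ that are nonsplit in some such $K$ is $\ll y/\log y$ times a quantity tending to $1$ slowly, while the condition that $\disc_f(B)$ avoid \emph{all} primes splitting in a \emph{fixed} $K$ already costs a factor $(\log x)^{-1/2}$, and one then needs a lower bound $N_{\Q,quat}(x) \gg x/(\log x)^{1/2}$ (from the Delange argument) together with the observation that the "good" $B$ form the complement. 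So the final inequality is $N'_{\Q,quat}(x;h) = N_{\Q,quat}(x) - (\text{bad count})$ and the whole content is showing $(\text{bad count}) = o(N_{\Q,quat}(x))$; I'd handle this by the second-moment/dyadic decomposition exactly as in Theorem~\ref{theorem:countbyphi}, carrying the quadratic-field discriminant through every estimate and invoking $h(x) = o((\log x)^{1/2})$ at the end to kill the accumulated error.
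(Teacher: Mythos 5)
There is a genuine gap, and it begins with your normalization of the total count. Over $\Q$ the quantity $N_{\mathbf{Q},quat}(x)$ is of order $x$, not $x/(\log x)^{1/2}$: counting \emph{all} quaternion algebras with $\disc_f(B)<x$ imposes no splitting condition relative to any quadratic field, only that $\disc_f(B)$ be squarefree (the behaviour at the real place being forced by the parity of $\#\Ram_f(B)$), so the count is comparable to the number of squarefree integers below $x$; this is why the paper simply quotes $N_{\mathbf{Q},quat}(x)\gg x$ from \cite[Thm 1.5]{LMPT}. The factor $(\log x)^{-1/2}$ arises only when an embedding of a \emph{fixed} quadratic field is required. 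Consequently your stated target, that the ``bad'' count be $o(x/(\log x)^{1/2})$, is both unnecessary and unattainable: once $h(x)\geq 4$, the algebras admitting an embedding of the single field $\Q(\sqrt{-1})$ already number $\asymp x/(\log x)^{1/2}$. The correct (and weaker) target is that the bad count be $o(x)$.

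The second, more substantive gap is the summation over the growing family of fields $K$ with $\abs{\Delta_K}<h(x)$, which you rightly identify as the crux but do not resolve. Your proposed fixes do not work: the claimed uniform bound $\ll x(\log x)^{-1/2}\exp(-c\sqrt{\log\log x})$ is false for a fixed small $\abs{\Delta_K}$ (there the count is genuinely $\asymp x/(\log x)^{1/2}$), and the compositum argument is inapplicable because the bad set is a \emph{union} over $K$ of avoidance conditions, not an intersection, so passing to the compositum gives no upper bound. The paper's mechanism is different: Lemma \ref{lemma:counting2lemma} gives, with an \emph{absolute} implied constant, $N_{\mathbf{Q},quat}(x;K)\ll \frac{x}{(\log x)^{1/2}}\bigl(\frac{\abs{\Delta_K}}{\phi(\abs{\Delta_K})}\bigr)^{1/2}\prod_{p\le x}\bigl(1-\frac{\leg{\Delta_K}{p}}{p}\bigr)^{1/2}$; since the conductor satisfies $\abs{\Delta_K}\le h(x)\le\log x$, the prime number theorem for progressions shows the Euler product is $\ll L(1,\chi)^{-1}$ for $\chi=\leg{\Delta_K}{\cdot}$, and summing over $\abs{\Delta_K}\le H=h(x)$ via Cauchy--Schwarz reduces the problem to the moment $\sum_{\abs{\Delta_K}\le H}\abs{\Delta_K}/\phi(\abs{\Delta_K})=O(H)$ and the negative moment $\sum_{\abs{\Delta_K}\le H}L(1,\chi)^{-1}=O(H)$, the latter being the Granville--Soundararajan input \cite{GS} that the introduction flags as the reason for the restriction to trace field $\Q$. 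This yields a bad count $\ll x\,h(x)/(\log x)^{1/2}=o(x)$, which combined with $N_{\mathbf{Q},quat}(x)\gg x$ proves the theorem. Without the reduction to $L(1,\chi)^{-1}$ and a genuine uniform-in-$K$ moment bound of this strength, your outline does not close.
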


To prove Theorem \ref{theorem:countingresult2}, we require the following lemma.

\begin{lem}\label{lemma:counting2lemma}
Let $K$ be a quadratic field. The number $N_{\mathbf{Q},quat}(x;K)$ of quaternion algebras $B$ over $\mathbf{Q}$ with $\disc_f(B) < x$ which admit an embedding of $K$ satisfies
\[ N_{\mathbf{Q},quat}(x;K)\ll \frac{x}{(\log{x})^{1/2}} \bigg(\frac{\abs{\Delta_K}}{\phi(\abs{\Delta_K})}\bigg)^{1/2} \prod_{p \le x}\bigg(1-\frac{\leg{\Delta_K}{p}}{p}\bigg)^{1/2}. \]
Here the implied constant is absolute.
\end{lem}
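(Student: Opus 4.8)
The plan is to reduce the count of quaternion algebras $B/\Q$ with $\disc_f(B) < x$ admitting an embedding of a fixed quadratic field $K$ to a sieved counting problem over squarefree integers, and then apply Lemma \ref{lem:sieve} with $k = \Q$. Recall from the Albert--Brauer--Hasse--Noether theory (as used in Propositions \ref{prop:silvermanapplication} and \ref{prop:brindzaapplication}) that $K$ embeds into $B$ if and only if no prime ramifying in $B$ splits in $K$, i.e. no prime dividing $\disc(B)$ is split in $K$. Since $B$ is determined by $\disc(B)$, and since there are only $O(1)$ choices for the infinite part of $\disc(B)$, it suffices to bound the number of squarefree positive integers $\mathfrak{d} = \disc_f(B) < x$ that are not divisible by any rational prime split in $K$. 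Let $\Pp$ be the set of rational primes $p$ that split in $K$; equivalently $\leg{\Delta_K}{p} = 1$ (with the understanding that ramified primes, where the Kronecker symbol vanishes, are allowed in $\mathfrak{d}$).

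The core step is to invoke the remark following Lemma \ref{lem:sieve} (the case $g \equiv 1$), which gives
\[ N_{\mathbf{Q},quat}(x;K) \ll x \prod_{\substack{p \le x \\ p \in \Pp}}\left(1 - \frac{1}{p}\right). \]
To convert this into the shape asserted in the lemma, I would compare the product over split primes to $\prod_{p \le x}(1 - \leg{\Delta_K}{p}/p)^{1/2}$. Writing $\leg{\Delta_K}{p} = 1$ for split $p$, $-1$ for inert $p$, and $0$ for $p \mid \Delta_K$, one has, up to $O(1)$ factors coming from the finitely many ramified primes,
\[ \prod_{p \le x}\left(1 - \frac{\leg{\Delta_K}{p}}{p}\right) = \prod_{\substack{p \le x \\ \text{split}}}\left(1 - \frac1p\right)\prod_{\substack{p \le x \\ \text{inert}}}\left(1 + \frac1p\right). \]
Both one-sided products behave like $(\log x)^{\mp 1/2}$ by Chebotarev/Mertens (as in the proof of Theorem \ref{theorem:countbyphi}), so their ratio is bounded above and below by absolute constants, and hence $\prod_{p \le x, \text{split}}(1 - 1/p) \asymp \prod_{p \le x}(1 - \leg{\Delta_K}{p}/p)^{1/2}$, but crucially with implied constants that do not yet account for the ramified primes. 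The ramified primes $p \mid \Delta_K$ contribute a missing factor $\prod_{p \mid \Delta_K}(1 - 1/p)^{-1/2} \cdot (\text{bounded}) = (\abs{\Delta_K}/\phi(\abs{\Delta_K}))^{1/2} \cdot O(1)$ when we pass from the full product $\prod_{p \le x}$ back to the product over split primes only; tracking this discrepancy is exactly what produces the factor $(\abs{\Delta_K}/\phi(\abs{\Delta_K}))^{1/2}$ in the statement, and it is where one must be careful that the implied constant stays absolute. Finally one also needs the elementary bound $1/\log x \ll \prod_{p \le x}(1 - 1/p)$ (Mertens) to see the whole expression is $\ll x/(\log x)^{1/2} \cdot (\text{the stated product})$, i.e. an extra $(\log x)^{1/2}$ can be harmlessly inserted since $\prod_{p\le x}(1-\leg{\Delta_K}{p}/p)^{1/2}$ differs from $\prod_{\text{split}, p \le x}(1-1/p)$ by bounded factors times the ramified correction.

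The main obstacle is bookkeeping the dependence on $K$: Lemma \ref{lem:sieve} is only stated with an implied constant depending on $k = \Q$ (hence absolute), so I must check that applying it with $\Pp$ equal to the split primes of $K$ genuinely gives an absolute constant, independent of $K$, and that every subsequent manipulation — the Mertens-type lower bound for $\prod(1-1/p)$, the comparison of one-sided Euler products, and the isolation of the ramified-prime factor — preserves absoluteness. The Chebotarev estimate $\sum_{p \le y, p \in \Pp} 1/p = \tfrac12\log\log y + O_K(1)$ has a $K$-dependent error term, so I would avoid using it in a form where that dependence leaks into the final constant; instead I would argue purely multiplicatively, bounding $\prod_{p \le x, \text{split}}(1-1/p) \big/ \prod_{p \le x}(1 - \leg{\Delta_K}{p}/p)^{1/2}$ by an absolute constant via $\prod_{p}(1 - \leg{\Delta_K}{p}/p)(1-1/p)^{-1}(1+1/p)^{-1} = \prod_p(1 + O(1/p^2))$-type estimates together with the exact evaluation of the ramified Euler factors. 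Once the absoluteness is secured, the remaining steps are the routine Euler-product and Mertens estimates already used earlier in the paper.
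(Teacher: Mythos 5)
Your reduction is the same as the paper's: $B$ is determined by the squarefree integer $\disc_f(B)$, which must avoid the primes that split in $K$, and the remark following Lemma \ref{lem:sieve} (i.e.\ \eqref{eq:upperboundsieve} with $k=\Q$) gives $N_{\mathbf{Q},quat}(x;K)\ll x\prod_{p\le x,\,\chi(p)=1}(1-1/p)$ with an absolute constant, where $\chi=\leg{\Delta_K}{\cdot}$. The gap is in the Euler-product comparison that follows. Your claim that $\prod_{p\le x,\ \mathrm{split}}(1-1/p)\asymp\prod_{p\le x}(1-\chi(p)/p)^{1/2}$ is false: the right-hand product converges to $L(1,\chi)^{-1/2}>0$, while the left-hand product decays like $(\log x)^{-1/2}$; likewise the ratio of the split and inert one-sided products has order $(\log x)^{-1}$, not $O(1)$. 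Consequently your closing assertion --- that the two quantities differ only by bounded factors times the ramified correction, so an extra $(\log x)^{1/2}$ ``can be harmlessly inserted'' --- is wrong in direction: one cannot insert a factor tending to $0$ on the right of an upper bound for free, and you invoke Mertens in the lower-bound form $1/\log x\ll\prod_{p\le x}(1-1/p)$ when what is needed here is the upper bound $\prod_{p\le x}(1-1/p)\ll 1/\log x$. The displayed identity $\prod_p(1-\chi(p)/p)(1-1/p)^{-1}(1+1/p)^{-1}=\prod_p(1+O(1/p^2))$ in your final paragraph is also false factorwise (for split $p$ the factor is $(1+1/p)^{-1}=1-1/p+O(1/p^2)$), so the ``purely multiplicative'' patch as stated does not repair the step.

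The correct bookkeeping, which is exactly what the paper does, is: for split $p$, $1-1/p=(1-1/p)^{1/2}(1-\chi(p)/p)^{1/2}$ exactly; for inert $p$, the sieve contributes $1=(1-1/p)^{1/2}(1-\chi(p)/p)^{1/2}(1-1/p^2)^{-1/2}$; for $p\mid\Delta_K$ it contributes $1=(1-1/p)^{1/2}(1-\chi(p)/p)^{1/2}(1-1/p)^{-1/2}$. Multiplying over $p\le x$,
\[
\prod_{\substack{p\le x\\ \chi(p)=1}}\Bigl(1-\frac1p\Bigr)\ \ll\ \Bigl(\prod_{p\le x}\Bigl(1-\frac1p\Bigr)\Bigr)^{1/2}\Bigl(\prod_{p\le x}\Bigl(1-\frac{\chi(p)}{p}\Bigr)\Bigr)^{1/2}\Bigl(\frac{\abs{\Delta_K}}{\phi(\abs{\Delta_K})}\Bigr)^{1/2},
\]
with an absolute constant, since the inert correction is at most $\prod_p(1-1/p^2)^{-1/2}\le\zeta(2)^{1/2}$ and $\prod_{p\mid\Delta_K}(1-1/p)^{-1}=\abs{\Delta_K}/\phi(\abs{\Delta_K})$; the factor $(\log x)^{-1/2}$ then comes from the absolute Mertens upper bound applied to $\prod_{p\le x}(1-1/p)^{1/2}$, with no Chebotarev input and hence no hidden dependence on $K$. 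The paper runs this same computation in logarithmic form, writing the sieve factor as $1-\frac{(1+\chi(p))/2}{p}$, using $\log\bigl(1-\frac{(1+\chi(p))/2}{p}\bigr)=\log\bigl((1-1/p)^{1/2}(1-\chi(p)/p)^{1/2}\bigr)+O(1/p^2)$, and handling $p\mid\Delta_K$ via $\prod_{p\mid\Delta_K}(1-\frac{1}{2p})^{-1}$. So your strategy and your concern about uniformity in $K$ are exactly right, but the specific comparisons you wrote down would not yield the stated bound without this correction.
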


\begin{proof}
Since $B$ is an algebra over $\Q$, the squarefree integer $\disc_f(B)$ determines $B$. Moreover, if $K$ embeds into $B$, then $\disc_f(B)$ is not divisible by any prime from $\Pp$, where $\Pp$ is the set of rational primes that split in $K$.
If $p$ is a rational prime not dividing $\Delta_K$ (equivalently, unramified in $K$), then $\leg{\Delta_K}{p}=1$ or $-1$, according to whether $p$ is split or inert in $K$, respectively; hence,
\[ \frac{1}{2}\left(1 + \leg{\Delta_K}{p}\right) =
\begin{cases}
1 &\text{if $p \in \Pp$},\\
0 &\text{otherwise}.
\end{cases}
\]
Since $\disc_f(B) < x$, equation \eqref{eq:upperboundsieve} (with $k=\Q$) shows that the number of possibilities for $B$ is
 \begin{equation}\label{eq:numberofB}\ll x \prod_{\substack{p \le x \\ p \nmid \Delta_K}}\bigg(1-\frac{\left(1+\leg{\Delta_K}{p}\right)/2}{p}\bigg) \le x \prod_{\substack{p \le x}}\bigg(1-\frac{\left(1+\leg{\Delta_K}{p}\right)/2}{p}\bigg) \prod_{p \mid \Delta_K}\bigg(1-\frac{1}{2p}\bigg)^{-1}. \end{equation}
Using that $\log(1+t) = t + O(t^2)$ for $|t| \le \frac12$,
\[ \log \bigg(1-\frac{\left(1+\leg{\Delta_K}{p}\right)/2}{p}\bigg) = \log \left(\bigg(1-\frac{1}{p}\bigg)^{1/2} \bigg(1-\frac{\leg{\Delta_K}{p}}{p}\bigg)^{1/2}\right) + O\left(\frac{1}{p^2}\right),\]
and similarly
\[ \log\left(\left(1-\frac{1}{2p}\right)^{-1}\right) = \log\left(\left(1-\frac{1}{p}\right)^{-1/2}\right) + O\left(\frac{1}{p^2}\right).\]
Now exponentiate. Keeping in mind that
\[ \prod_{p \le x}\left(1-\frac{1}{p}\right) \ll \frac{1}{\log{x}}, \quad \prod_{p \mid \Delta_K}\left(1-\frac{1}{p}\right)^{-1} = \frac{|\Delta_K|}{\phi(|\Delta_K|)}, \quad \sum_{p}\frac{1}{p^2} < \infty, \]
we see that the final expression in \eqref{eq:numberofB} is
\[ \ll \frac{x}{(\log{x})^{1/2}} \bigg(\frac{\abs{\Delta_K}}{\phi(\abs{\Delta_K})}\bigg)^{1/2} \prod_{p \le x}\bigg(1-\frac{\leg{\Delta_K}{p}}{p}\bigg)^{1/2}, \]
where the implied constants are absolute.
\end{proof}

We now prove Theorem \ref{theorem:countingresult2}.

\begin{proof}[Proof of Theorem \ref{theorem:countingresult2}]
Let $H$ be a parameter assumed to satisfy $H \le \log{x}$. Let $K$ be a quadratic field with $\abs{\Delta_K} \le H$, and let $\chi(\cdot) = \leg{\Delta_K}{\cdot}$ be the associated quadratic Dirichlet character. Let us estimate the Euler product factor appearing in the upper bound of Lemma \ref{lemma:counting2lemma}. Since
\[ \log\left(1-\frac{\chi(p)}{p}\right) = -\frac{\chi(p)}{p} + O\left(\frac{1}{p^2}\right) \]
and $\sum_{p} \frac{1}{p^2} < \infty$, we see that
\begin{equation}\label{E:L-Value}
L(1,\chi)\prod_{p \le x}\bigg(1-\frac{\chi(p)}{p}\bigg) = \prod_{p > x}\bigg(1-\frac{\chi(p)}{p}\bigg) \ll \exp\left(-\sum_{p > x}\frac{\chi(p)}{p}\right).
\end{equation}
Since $\chi$ is a primitive character of conductor $\abs{\Delta_K}$, and $\abs{\Delta_K} \le H \le \log{x}$, the prime number theorem for progressions implies that $\sum_{p \le T} \chi(p) \ll T/(\log{T})^2$, uniformly for $T \ge x$. (Here we use \cite[eq. (8), p.123]{davenport}, along with the bound $\beta_1 < 1-c/q^{1/2}\log^2{q}$ coming from Dirichlet's class number formula.) Hence, by partial summation, $\sum_{p > x}\frac{\chi(p)}{p}=O(1)$. With (\ref{E:L-Value}), the above yields
\[ \prod_{p \le x}\bigg(1-\frac{\chi(p)}{p}\bigg) \ll L(1,\chi)^{-1}. \]

It now follows from Lemma \ref{lemma:counting2lemma} that the number $N_{\Q,quat}(H;K)$ of quaternion algebras $B/\Q$ that admit an embedding of some quadratic field $K$ with $\abs{\Delta_K} \le H$ satisfies
\[ N_{\Q,quat}(H;K) \ll \frac{x}{(\log{x})^{1/2}} \sum_{\abs{\Delta_K} \le H} \bigg(\frac{|\Delta_K|}{\phi(\abs{\Delta_K})}\bigg)^{1/2} L(1,\leg{\Delta_K}{\cdot})^{-1/2}. \]
By Cauchy--Schwarz, the sum on $\Delta_K$ is
\[ \ll \bigg(\sum_{\abs{\Delta_K} \le H} \frac{\abs{\Delta_K}}{\phi(\abs{\Delta_K})}\bigg)^{1/2}\bigg(\sum_{\abs{\Delta_K} \le H} L(1,\leg{\Delta_K}{\cdot})^{-1}\bigg)^{1/2}.   \]
The first sum is $O(H)$. In fact, it is well-known that the arithmetic function $n/\phi(n)$ has finite moments of every order (see, e.g., \cite[Ex 14, p. 42]{MV}). From \cite[Thm 2]{GS} (with $z=-1$), and the subsequent comment there about Siegel's theorem, the second sum on $\Delta_K$ is also $O(H)$. We conclude that $N_{\Q,quat}(H;K) \ll \frac{x H}{(\log{x})^{1/2}}$. Finally, let $H = h(x)$. Since $h(x)=o((\log{x})^{1/2})$, our upper bound is $o(x)$. Since $N_{\Q,quat}(x) \gg x$ from \cite[Thm 1.5]{LMPT}, the theorem follows.
\end{proof}

\begin{rmk}
The above argument is similar to the proof of \cite[Lemma 2.6]{FP}.
\end{rmk}

\subsection{Applications to the systole growth of arithmetic hyperbolic surfaces}

Recall that for a hyperbolic $2$--orbifold $M$, we denote the systole of $M$ by $\Sys(M)$, which is the length of the shortest closed geodesic on $M$.

\begin{theorem}
Let $S^1$ be the set of all arithmetic Fuchsian groups of the form $\Gamma_{\mathcal O}^1$ where $\mathcal O$ is a maximal order of an indefinite quaternion division algebra over $\mathbf{Q}$ and $S^{min}$ be the set of all maximal arithmetic Fuchsian groups with invariant trace field $\mathbf{Q}$ which have minimal covolume within their commensurability class. Then for all $\epsilon>0$ the following are true:

\begin{enumerate}\label{systole}
\item The set of $\Gamma\in S^{1}$ such that \[\Sys(\mathbf{H}^2/\Gamma)>(\frac{1}{4}-\epsilon)\log\log\left(\frac{3}{\pi}\Vol(\mathbf{H}^2/\Gamma)\right)\] has density one in $S^{1}$.
\item The set of $\Gamma\in S^{min}$ such that \[\Sys(\mathbf{H}^2/\Gamma)>(\frac{1}{4}-\epsilon)\log\log\left(\frac{24}{\pi}\Vol(\mathbf{H}^2/\Gamma)\right)\] has density one in $S^{min}$.
\end{enumerate}
\end{theorem}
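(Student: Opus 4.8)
The plan is to combine Proposition \ref{prop:silvermanapplication} with the counting result of Theorem \ref{theorem:countingresult2}, then translate from the discriminant count $\disc_f(B)$ to the covolume using Borel's volume formula \eqref{E:Borel2Vol}. First, fix $\epsilon > 0$ and set $h(x) = e^{(\frac{1}{4}-\epsilon)\cdot 2 \log\log x}$ up to a suitable constant; more precisely, I would choose the function $h$ so that a quaternion algebra $B/\mathbf{Q}$ with $\disc_f(B) < x$ admitting \emph{no} embedding of a quadratic field of discriminant below $h(x)$ has its commensurability class $\mathscr{C}$ satisfying $\Sys(\mathscr{C}) > (\frac14-\epsilon)\log\log(\frac{3}{\pi}V_{\mathscr{C}})$ (resp.\ with $\frac{24}{\pi}$ in case (ii)). The key numerical input is Proposition \ref{prop:silvermanapplication}: if $\Sys(\mathscr{C}(k,B)) \le x_0$ with $k = \mathbf{Q}$ (so $n_k = 1$), then $B$ admits an embedding of a quadratic $L$ with $|\Delta_L| < e^{2(1+x_0)}$. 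Contrapositively, if $B$ admits no embedding of a quadratic field of discriminant $< e^{2(1+x_0)}$, then $\Sys(\mathscr{C}) > x_0$. So one wants $e^{2(1+x_0)} \le h(\disc_f(B))$, i.e.\ $x_0 = \frac12 \log h(\disc_f(B)) - 1$, which, after relating $\disc_f(B)$ to $\Vol(\mathbf{H}^2/\Gamma_{\mathcal O}^1) = \frac{\pi}{3}\Phi(\disc_f(B))$ via \eqref{E:Borel2Vol} with $\Delta_{\mathbf{Q}}=1$, $\zeta_{\mathbf{Q}}(2) = \pi^2/6$, gives a lower bound of shape $(\frac14-\epsilon)\log\log(\frac{3}{\pi}\Vol)$ once $h(x)$ is chosen as a slowly growing power of $\log x$ — note $\frac{1}{2}\log(\log x)^{1/2-2\epsilon'} = (\frac14 - \epsilon')\log\log x$, so take $h(x) = (\log x)^{1/2 - 2\epsilon'}$, which is indeed $o((\log x)^{1/2})$, as required by Theorem \ref{theorem:countingresult2}.

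Second, I would invoke Theorem \ref{theorem:countingresult2} directly with this $h$: the number $N'_{\mathbf{Q},quat}(x;h)$ of quaternion algebras $B/\mathbf{Q}$ with $\disc_f(B) < x$ admitting no embedding of a quadratic field of discriminant $< h(x)$ satisfies $N'_{\mathbf{Q},quat}(x;h) \sim N_{\mathbf{Q},quat}(x)$. Dividing into dyadic ranges $\disc_f(B) \in [x/2, x)$ and using that within such a range the quantities $\Phi(\disc_f(B))$, $\Vol(\mathbf{H}^2/\Gamma_{\mathcal O}^1)$, and $h(\disc_f(B))$ are all comparable to their values at $x$ up to lower-order corrections that get absorbed into $\epsilon$, one concludes that for a density-one subset of $S^1$ (ordered by covolume, equivalently by $\Phi(\disc_f(B))$, equivalently — up to the bounded multiplicative discrepancy $\disc_f(B)/\Phi(\disc_f(B)) = \prod_{p\mid \disc_f(B)}\frac{p}{p-1}$, which is $(\log\log)^{O(1)}$ and hence harmless at the $\log\log$ scale — by $\disc_f(B)$) the systole bound of part (i) holds. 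For part (ii), the only change is the constant relating the \emph{minimal} covolume in the class to $\disc_f(B)$: the minimal-covolume maximal arithmetic Fuchsian group over $\mathbf{Q}$ in the class $\mathscr{C}(\mathbf{Q},B)$ has covolume $\frac{\pi}{3}\prod_{p\mid\disc_f(B)}\frac{p-1}{2^{?}}$ (the normalizer of a maximal order, per \cite[Ch.~11]{MR}) — I would look up the exact index-$2^{\omega(\disc_f B)}$ factor from Borel/\cite{MR}, which replaces $\frac{3}{\pi}\Vol$ by $\frac{24}{\pi}\Vol$; since this only changes $\Vol$ by a factor of $2^{\omega(\disc_f B)} = (\log x)^{O(1)}$, it shifts $\log\log(\cdots)$ by $O(\log\log\log x) = o(\log\log x)$, again absorbed into $\epsilon$.

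The main obstacle, and the step requiring the most care, is the bookkeeping that shows the multiplicative slack between $\disc_f(B)$, $\Phi(\disc_f(B))$, and the relevant volume (differing by the "anti-totient" factor $\prod_{p\mid\disc_f B}\frac{p}{p-1}$ and by the $2^{\omega(\disc_f B)}$ normalizer index) does not degrade the constant $\frac14$. The point is that on a density-one set these factors are at most $(\log\disc_f B)^{o(1)}$ — indeed $\omega(\disc_f B) \ll \log x / \log\log x$ trivially, and on a density-one set $\omega(\disc_f B) \sim \log\log x$ by Erdős–Kac-type considerations, so the factors are $\exp(O(\log\log\log x))$ — hence applying $\log\log$ absorbs them entirely into an arbitrarily small loss in the leading constant. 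I would state this carefully as: for any $\epsilon' > 0$, outside a density-zero set one has $\left|\log\log(\frac{3}{\pi}\Vol) - \log\log\disc_f(B)\right| < \epsilon'\log\log\disc_f(B)$, and then choose $\epsilon'$ small relative to $\epsilon$. Everything else is a direct quotation of Proposition \ref{prop:silvermanapplication} and Theorem \ref{theorem:countingresult2}.
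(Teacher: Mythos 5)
Your proposal follows essentially the same route as the paper's proof: choose $h(x)=(\log x)^{1/2-\epsilon}$, apply Theorem \ref{theorem:countingresult2} to get a density one set of algebras, and combine the contrapositive of Proposition \ref{prop:silvermanapplication} (with $n_k=1$) with Borel's volume formula, your extra bookkeeping on dyadic ranges and the factors $\disc_f(B)/\Phi(\disc_f(B))$ and $2^{\omega(\disc_f(B))}$ being exactly the slack the paper silently absorbs into $\epsilon$. The only points the paper makes explicit that you leave implicit are that indefinite rational quaternion algebras have type number one (so counting algebras really is counting elements of $S^1$ and $S^{min}$) and the precise index $[\Gamma^{min}:\Gamma^1_{\mathcal O}]=2^{1+\#\Ram_f(B)}$ from Chinburg--Friedman, which pins down the constant $\frac{24}{\pi}$ in part (ii).
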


\begin{proof}
Let $h(x)=\log(x)^{\frac{1}{2}-\epsilon}$ and $B$ be an indefinite quaternion division algebra over $\textbf{Q}$ which does not admit an embedding of any quadratic field with absolute value of discriminant less than $h(\disc_f(B))$. Note that by Theorem \ref{theorem:countingresult2} and its proof, the set of such quaternion algebras has density one within the set of all indefinite quaternion division algebras over $\textbf{Q}$. Let $\mathcal O$ be a maximal order in $B$ so that $\Gamma_{\mathcal O}^1\in\mathscr C(\textbf{Q},B)$ and $\Gamma^{min}$ be the element of $\mathscr C(\textbf{Q},B)$ with minimal covolume. We remark that because every indefinite quaternion algebra defined over $\textbf{Q}$ has type number $1$, there is a one-to-one correspondence between arithmetic Fuchsian groups of the form $\Gamma_{\mathcal O}^1$, arithmetic Fuchsian groups of the form $\Gamma^{min}$ and commensurability classes $\mathscr C(\textbf{Q},B)$. The first assertion now follows from Proposition \ref{prop:silvermanapplication}, Theorem \ref{theorem:countingresult2} and equation (\ref{volC}). The second assertion follows from the same reasoning along with the fact that (see \cite{borel-commensurability} and \cite[Lemma 4.1]{chinburg-smallestorbifold}) $[\Gamma^{min}:\Gamma_{\mathcal O}^1]=2^{1+\#\Ram_f(B)}$.
\end{proof}

As a consequence of Theorem \ref{systole}, we obtain Theorem \ref{T:SystoleInequality}. The details are as follows.

\begin{proof}[Proof of Theorem \ref{T:SystoleInequality}]
In light of Theorem \ref{systole} it suffices to show that for all $\Gamma\in\mathscr C(\textbf{Q},B)$, where $\mathscr C(\textbf{Q},B)$ lies within a set of commensurability classes of density one, that $$\Sys(\textbf{H}^2/\Gamma)>\frac{1}{2}\Sys(\textbf{H}^2/\Gamma^{min}).$$ To that end, let $\gamma\in\Gamma$ be a hyperbolic element whose associated geodesic has length $\ell(\gamma)$. The subgroup $\Gamma^{(2)}$ generated by squares of elements in $\Gamma$ is contained in $\Gamma_{\mathcal O}^1$ for some maximal order $\mathcal O$ of $B$ (see \cite[Ch 8]{MR}). As $\Gamma^{min}$ is derived from the normalizer $N(\mathcal O)$ of $\mathcal O$ in $B^*$, we see that up to isomorphism $\gamma^2\in \Gamma^{(2)}\subset \Gamma_{\mathcal O}^1\subset \Gamma^{min}$. As $\ell(\gamma^2)=2\ell(\gamma)$, the corollary follows from Theorem \ref{systole}.
\end{proof}

\section{Theorem \ref{T:SSC-Density}: Totally geodesic surfaces of small area}\label{Sec:SSC}

Theorem \ref{T:SSC-Density} is an immediate consequence of the following theorem.

\begin{theorem}\label{theorem:surfaces}
Let $k$ be a totally real field and $\mathscr C(k, B_0)$ be a commensurability class of arithmetic hyperbolic surfaces with invariant trace field $k$. The set of commensurability classes of arithmetic hyperbolic $3$--orbifolds with a representative containing a totally geodesic $2$--orbifold in $\mathscr C(k, B_0)$ has density zero within the set of all commensurability classes of arithmetic hyperbolic $3$--orbifolds with invariant trace field a quadratic extension of $k$.
\end{theorem}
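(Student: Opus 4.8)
The plan is to reduce Theorem~\ref{theorem:surfaces} to a counting estimate and then play an upper bound for the numerator of the relevant ratio against a lower bound for its denominator.

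The first step is to invoke the arithmetic description of totally geodesic surfaces in arithmetic hyperbolic $3$--orbifolds (see \cite[Chapter~9]{MR}). Let $v_0$ be the unique real place of $k$ at which $B_0$ is unramified. If a commensurability class $\mathscr C(K,B)$ of arithmetic hyperbolic $3$--orbifolds has a representative containing a totally geodesic $2$--orbifold lying in $\mathscr C(k,B_0)$, then $[K:k]=2$, the unique complex place of $K$ lies above $v_0$ (equivalently, every real place of $k$ distinct from $v_0$ splits in $K$), and $B\cong B_0\otimes_k K$. The place conditions are forced: if some real place $v\neq v_0$ were to complexify in $K$, then $v_0$ would have to split, and $B_0\otimes_k K$ would be unramified at the two real places of $K$ above $v_0$, contradicting that it defines a Kleinian group; and two real places complexifying is impossible since $K$ would then have more than one complex place. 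The isomorphism $B\cong B_0\otimes_k K$ is the rigidity statement for the invariant quaternion algebra of a totally geodesic surface. In particular, the commensurability class $\mathscr C(K,B)$ is determined by $K$ alone, so the numerator we must bound is at most the number of admissible quadratic extensions $K/k$ (those satisfying the place conditions) with $V_{\mathscr C(K,\,B_0\otimes_k K)}<V$.

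The second step bounds this count using Borel's volume formula from \S\ref{section:prelims} (\cite{borel-commensurability}; see also \cite[Chapter~11]{MR}). Since $\zeta_K(2)>1$, since $\prod_{\frakp\mid\disc_f(B)}(\abs{\frakp}-1)\ge 1$, and since $n_K=2n_k$, that formula gives $V_{\mathscr C(K,\,B_0\otimes_k K)}\gg_k\abs{\Delta_K}^{3/2}$. By the tower formula for discriminants, $\abs{\Delta_K}=\abs{\Delta_k}^{2}\abs{\Delta_{K/k}}$, so $V_{\mathscr C(K,\,B_0\otimes_k K)}<V$ forces $\abs{\Delta_{K/k}}\ll_k V^{2/3}$. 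The number of quadratic extensions of $k$ with norm of relative discriminant at most $X$ is $O_k(X)$ --- a weak form of the Datskovsky--Wright theorem \cite{DW88} already used in \S\ref{Sec:SGC} --- whence the numerator is $\ll_{k,B_0}V^{2/3}$.

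For the denominator it suffices to exhibit a single suitable field. Pick $d\in k^\times$ that is negative at $v_0$ and positive at every other real place of $k$ (possible by weak approximation on signs) and set $K_0=k(\sqrt d)$, a quadratic extension of $k$ with exactly one complex place. By Corollary~\ref{cor:CommAsyRate} applied to $K_0$, the number of commensurability classes of arithmetic hyperbolic $3$--orbifolds with invariant trace field $K_0$ and $V_{\mathscr C}<V$ is $\asymp_{K_0}V$, hence $\gg V$; since each such class has invariant trace field a quadratic extension of $k$, the denominator is $\gg V$. Combining the two estimates, the density in Theorem~\ref{theorem:surfaces} is $\ll_{k,B_0}V^{2/3}/V=V^{-1/3}$, which tends to $0$ as $V\to\infty$. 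The one genuinely substantive point is the structural input of the first step --- identifying the $3$--orbifold commensurability classes that can contain a totally geodesic surface from $\mathscr C(k,B_0)$ with the classes $\mathscr C(K,B_0\otimes_k K)$ --- which relies on the arithmetic of quaternion algebras: embeddings of quadratic subfields, a place-by-place analysis of the ramification of the base change $B_0\otimes_k K$ via the Albert--Brauer--Hasse--Noether theorem, and the classification of arithmetic Kleinian groups by the pair $(K,B)$. Once this is in place, the remaining estimates are routine.
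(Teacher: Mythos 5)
Your proposal is correct and takes essentially the same route as the paper's proof: the numerator is bounded by $O_{k,B_0}(V^{2/3})$ using the characterization from \cite[Ch 9.5]{MR} (which forces $B\cong B_0\otimes_k K$, so the class is determined by $K$), Borel's volume formula, and the Datskovsky--Wright count of quadratic extensions, while the denominator is $\gg V$ by fixing a single quadratic extension of $k$ with a unique complex place and counting quaternion algebras over it. Your appeal to Corollary \ref{cor:CommAsyRate} for the lower bound is equivalent to the paper's direct citation of \cite[Thm 1.5]{LMPT}, and your extra verification that the complex place of $K$ must lie over $v_0$ is a harmless refinement not needed for the upper bound.
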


\begin{proof}
Recall that the volume of a commensurability class $\mathscr C=\mathscr C(L,B)$ of arithmetic hyperbolic $3$--orbifolds with invariant trace field $L$ is

\begin{equation}\label{volC}
V_\mathscr{C}=\covol(\Gamma_{\calO}^1)=\frac{\abs{\Delta_L}^{\frac{3}{2}}\zeta_L(2)\Phi(\disc_f(B))}{(4\pi^2)^{n_k-1}},
\end{equation}
where $\mathcal O$ is a maximal order of $B$. The results of \cite[Ch 9.5]{MR} show that a representative of $\mathscr C=\mathscr C(L,B)$ contains a primitive totally geodesic surface in $\mathscr C(k,B_0)$
if and only if $L$ is a quadratic field extension of $k$ with a unique complex place and $B_0\otimes_{k} L\cong B$. The theorem of Datskovsky--Wright \cite{DW88} shows that the number of such $\mathscr C(L,B)$ with $V_{\mathscr C(L,B)}<V$ is $\ll V^{\frac{2}{3}}$ for large $V$, where the implied constant depends on $\mathscr C(k,B_0)$. Suppose now that $L$ is a fixed quadratic extension of $k$ which has a unique complex place. Then there exists a constant $\delta_L>0$ such that the number of quaternion algebras $B$ over $L$ such that $V_{\mathscr C(L,B)}<V$ is $\gg \delta_L V$ for large $V$ (see \cite[Thm 1.5]{LMPT} and the remark which follows the proof of Theorem \ref{theorem:shortgeodesiccommensurabilityclasses}), which already proves the theorem.
\end{proof}

The following is an immediate consequence of Theorem \ref{theorem:surfaces}, the fact that there are only finitely many arithmetic hyperbolic $2$--orbifolds of bounded volume \cite{borel-commensurability} and the fact that all totally geodesic $2$--orbifolds of an arithmetic hyperbolic $3$--orbifold must have the same invariant trace field (which must be the maximal totally real subfield of the invariant trace field of the $3$--orbifold).

\begin{cor}
For all $V>0$ the set of commensurability classes of arithmetic hyperbolic $3$--orbifolds with a representative containing a totally geodesic $2$--orbifold with area less than $V$ has density zero within the set of all commensurability classes of arithmetic hyperbolic $3$--orbifolds.
\end{cor}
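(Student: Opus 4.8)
The goal is to prove Theorem~\ref{theorem:surfaces}: for a fixed totally real field $k$ and a fixed commensurability class $\mathscr C(k,B_0)$ of arithmetic hyperbolic surfaces, the set of commensurability classes of arithmetic hyperbolic $3$--orbifolds having a representative which contains a totally geodesic $2$--orbifold in $\mathscr C(k,B_0)$ has density zero among all commensurability classes of arithmetic hyperbolic $3$--orbifolds whose invariant trace field is a quadratic extension of $k$.

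The plan is to count the two competing families and compare their growth rates, using the volume formula for commensurability classes from \S\ref{section:prelims}. First I would recall that a commensurability class $\mathscr C(L,B)$ of arithmetic hyperbolic $3$--orbifolds has volume given by \eqref{volC}, so that $V_{\mathscr C(L,B)}$ is (up to constants depending only on $L$) a constant multiple of $\Phi(\disc_f(B))$; counting classes with $V_{\mathscr C}<V$ therefore amounts to counting the relevant pairs $(L,B)$ with $\Phi(\disc_f(B))$ in a bounded range. Next I would invoke the structure theory from \cite[Ch 9.5]{MR}: a representative of $\mathscr C(L,B)$ contains a totally geodesic $2$--orbifold lying in $\mathscr C(k,B_0)$ precisely when $L$ is a quadratic extension of $k$ with a unique complex place and $B\cong B_0\otimes_k L$. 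In particular, once $L$ is fixed, the algebra $B$ is \emph{determined}, so the number of ``good'' classes with volume below $V$ is at most the number of admissible quadratic extensions $L/k$ with $V_{\mathscr C(L,B_0\otimes_k L)}<V$. By the theorem of Datskovsky--Wright \cite{DW88}, the number of quadratic extensions $L/k$ with $\abs{\Delta_{L/k}}<x$ grows linearly in $x$; since the volume grows like a fixed power (at least $\abs{\Delta_L}^{3/2}\gg \abs{\Delta_{L/k}}^{3/2}$) of the discriminant data, this forces the count of good classes of volume $<V$ to be $\ll_{\mathscr C(k,B_0)} V^{2/3}$.

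For the denominator I would fix a single admissible quadratic extension $L/k$ (one with a unique complex place) and count \emph{all} commensurability classes $\mathscr C(L,B)$ with $V_{\mathscr C(L,B)}<V$, i.e.\ count quaternion algebras $B$ over $L$, ramified at all real places, with $\Phi(\disc_f(B))$ below the appropriate threshold $c_L V$. A (minor modification of the) Delange-type argument behind \cite[Thm 1.5]{LMPT}, exactly as in the proof of Corollary~\ref{T:SGC-Density} and the remark following the proof of Theorem~\ref{theorem:shortgeodesiccommensurabilityclasses}, shows this count is $\gg_L V$: one restricts $\disc_\infty(B)$ to a fixed product of real places and counts squarefree ideals $\mathfrak d$ of $\calO_L$ with the correct parity of $\mu(\mathfrak d)$, whose Dirichlet series has a simple pole at $s=1$. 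Putting the pieces together, the ratio of good classes to all classes of invariant trace field a quadratic extension of $k$ is $\ll V^{2/3}/V = V^{-1/3}\to 0$, which is the asserted density zero statement. (Since $V^{2/3} = o(V)$, the denominator even from a single fixed $L$ already swamps the numerator, so the theorem follows.)

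I expect the main obstacle to be purely bookkeeping rather than conceptual: matching the normalizations of the volume formula \eqref{volC} with the discriminant-counting results, and correctly invoking the Datskovsky--Wright asymptotic to extract the exponent $2/3$ from the $3/2$-power appearing in the volume of a $3$--orbifold class. One must be careful that ``volume of the commensurability class'' here is the covolume of $\Gamma_{\calO}^1$ (not the minimal covolume in the class), and that the implied constants in both counts are allowed to depend on $k$ and on $\mathscr C(k,B_0)$ but not on $L$ in a way that would break the comparison --- here it is enough that the \emph{lower} bound $\gg_L V$ holds for at least one admissible $L$, which is immediate. The passage from Theorem~\ref{theorem:surfaces} to Theorem~\ref{T:SSC-Density} is then a finiteness argument: there are only finitely many arithmetic hyperbolic $2$--orbifolds of bounded area \cite{borel-commensurability}, hence only finitely many commensurability classes $\mathscr C(k_i,B_0^{(i)})$ to consider, and a finite union of density-zero sets is density zero.
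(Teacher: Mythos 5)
Your proposal follows the paper's own argument essentially verbatim: the same characterization from \cite[Ch 9.5]{MR} together with the Datskovsky--Wright count giving an $\ll V^{2/3}$ upper bound for the classes $\mathscr C(L,B_0\otimes_k L)$ of volume below $V$, the same $\gg_L V$ Delange/\cite{LMPT} lower bound coming from a single fixed admissible quadratic extension $L$, and the same finiteness-plus-finite-union deduction using that there are only finitely many arithmetic hyperbolic $2$--orbifolds of area less than $V$. The only point you leave implicit is that the corollary asks for density within \emph{all} commensurability classes of arithmetic hyperbolic $3$--orbifolds rather than only those whose invariant trace field is a quadratic extension of some $k_i$; since the exceptional classes lie in that subfamily and enlarging the ambient family only increases the denominator (which is $\gg V$ already from one fixed field with a unique complex place), this is immediate.
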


\begin{rmk}
For a commensurability class $\mathcal{C}(L,B)$ of arithmetic hyperbolic 3--orbifolds, we can consider the minimal area of a totally geodesic, immersed 2--orbifold in $M$ as we vary $M \in \mathcal{C}(L,B)$. As in the case of systoles, there are infinitely many $3$--orbifolds in $\mathcal{C}(L,B)$ that contain a totally geodesic, immersed $2$--orbifold of minimal area. Unlike Lemma \ref{L:OrbMan1Sys}, these $3$--orbifolds might never be manifolds. This can fail for the trivial reason that the minimal area totally geodesic, immersed $2$--orbifolds in $M \in \mathcal{C}(L,B)$ need not be manifolds. If there is a minimal area totally geodesic $2$--manifold, then there exists a $3$--manifold $M \in \mathcal{C}(L,B)$ that contains this $2$--manifold as a totally geodesic submanifold. This follows from the non-trivial fact that the fundamental groups of complete, finite volume, hyperbolic $3$--orbifolds are subgroup separable \cite[Cor 9.4]{Agol}; see \cite[\S 7.3]{McR} for a discussion of how this follows from subgroup separability.
\end{rmk}

\section*{Acknowledgements} The authors would like to thank Ian Agol, Mikhail Belolipetsky, Bobby Grizzard, and Alan Reid for useful conversations on the material of this paper. The authors also thank the anonymous referees for comments that helped improve the mathematics and readability of this article. The first author was partially supported by NSF RTG grant DMS-1045119 and by an NSF Mathematical Sciences Postdoctoral Fellowship. The second author was partially supported by the NSF grants DMS-1105710 and DMS-1408458. The third author was partially supported by the NSF grant DMS-1402268. The fourth author was partially supported by a Max Planck Institute fellowship and by an AMS Simons Travel Grant.



\end{document}